\DeclareMathOperator{\Poi}{Poi}
\newcommand{\E}{\mathbb{E}}
\renewcommand{\P}{\mathbb{P}}
\newcommand{\Z}{\mathbb{Z}}
\newcommand{\ep}{\epsilon}
\newcommand{\abs}[2][]{#1\lvert #2 #1\rvert}
\theoremstyle{plain}
\newtheorem{thm}{Theorem}
\newtheorem{lemma}[thm]{Lemma}
\newtheorem{prop}[thm]{Proposition}
\newtheorem{cor}[thm]{Corollary}
\title{Generalized Random Simplicial Complexes}
\author{Christopher F. Fowler}
\begin{document}
\maketitle

\begin{abstract} We consider a multi-parameter model for randomly constructing simplicial complexes that interpolates between random clique complexes and Linial-Meshulam random $k$-dimensional complexes. Unlike these models, multi-parameter complexes exhibit nontrivial homology in numerous dimensions simultaneously. We establish upper and lower thresholds for the appearance of nontrivial cohomology in each dimension and characterize the behavior at criticality.

\end{abstract}

\begin{section}{Introduction}
\begin{subsection}{Background}
Many problems in physics, economics, biology, and mechanics involve the modeling of extremely large and intricate systems. With such high levels of complexity, understanding these systems from their microscopic structure is often intractable. In such cases it may make more sense to view them as random topological spaces with certain probability parameters. This framework enables us to make a variety of powerful conclusions about how these systems will generally behave. Indeed, as mentioned in \cite{survey}, the study of random geometric and topological spaces has on several occasions lent intuition to the extraordinary prevalence of certain properties amongst mathematical objects.

The purpose of this work is to understand the homological behavior of a generalized model for random simplicial complexes, mentioned in \cite{survey} and recently explored in \cite{douchers}. We define $X(n, p_1, p_2, \ldots)$ to be the probability distribution over simplicial complexes on vertex set $[n] = \{1, \ldots, n\}$ whose distribution on $1$-skeletons agrees with $G(n, p_1)$. The distribution on higher dimensional skeletons is constructed inductively: for an integer $k > 1$, any $k$-simplex whose boundary is contained in our complex is added with probability $p_k$. This provides a measure on virtually all simplicial complexes on $n$ vertices. Two well studied structures, Linial-Meshulam and clique complexes, are realized as $X(n, 1, \ldots, 1, p_k, 0, \ldots)$ and $X(n, p, 1, \ldots)$.

The study of random topological spaces began with random graphs, the seminal example of which is $G(n,p)$, the Erd\H{o}s-R\'enyi model. Given a probability parameter $p \in (0,1)$, typically a function of $n$, we consider a graph $G$ on $n$ vertices where every edge between two vertices of $G$ is added independently with probability $p$. This defines a probability measure on the set of all simple graphs on $n$ vertices and we say $G \sim G(n,p)$ to indicate $G$ is a random graph with law $G(n,p)$.

Most random topology results pertain to the asymptotic behavior of a model, ie.\ what happens as the number of vertices tends to infinity. Given some property $\mathcal{A}$ of simplicial complexes, we say that $X \in \mathcal{A}$ \emph{with high probability}, or \emph{w.h.p.},\ if 
\begin{equation*}
\lim_{n\rightarrow \infty} \P \left[X \in \mathcal{A} \right] = 1.
\end{equation*}
A formative result of random graph theory, proven by Erd\H{o}s and R\'enyi in \cite{erdos}, was the sharp threshold of $p = \log n / n$ for connectivity in $G(n,p)$: for any $\omega(n) \rightarrow \infty$ as $n \rightarrow \infty$, if $p \geq (\log n +\omega(n)) / n$ then $G(n,p)$ is w.h.p.\ connected, and if $p \leq (\log n- \omega(n)) / n$ then $G(n,p)$ is w.h.p.\ disconnected.

Significant work has been done on the behavior of random graphs since \cite{erdos}. Providing a higher dimension analog, recent study has been focused on several models for random simplicial complexes. One of the most natural questions to ask, results in this field often depict the homological or cohomological behavior of a complex. Even the connectivity threshold for $G(n,p)$ is a statement about the $0$-homology of graphs: $H_0(G, \Z) = \Z^m$ where $m$ is the number of connected components of $G$.

A high-dimensional analog to $G(n,p)$ is $Y_k(n, p)$, the Linial-Meshulam model for random $k$-dimensional simplicial complexes. We begin with a complex on $n$ vertices and full $(k-1)$-skeleton, then add every possible $k$-face independently with probability $p$. Linial and Meshulam initially considered when $k=2$ in \cite{rand2}, establishing a sharp threshold for when $\Z_2$-homology disappears in the first dimension. Babson, Hoffman, and Kahle later looked at the fundamental group of this model in \cite{fundamental2}, proving a threshold where $\pi_1\left(Y_2(n,p)\right)$ transitions w.h.p.\ from hyperbolic to trivial.

Meshulam and Wallach in \cite{randk} extended the result in \cite{rand2} to $H_{k-1} \left(Y_k(n,p), \Z_q \right)$ for any dimension $k$. Their work was followed by \cite{randk2}, where Hoffman, Kahle, and Paquette demonstrated an upper bound for the vanishing of integer homology in this model. It is also natural to ask how $H_k(Y , \Z)$ behaves in these complexes. Kozlov proved a sharp threshold for the appearance of $k$-homology in \cite{kozlovthreshold}. Aronshtam and Linial in \cite{tophomvanish}, joined by {\L}uczak and Meshulam in \cite{tophomcollapse}, shed further light on the topological structure of these complexes.

Another model of interest is the random clique complex model, $X(n,p)$. Just as in our own model, the distribution of the $1$-skeleton is identical to $G(n,p)$, but in this case the edges dictate the entire complex. Given some $X \sim X(n,p)$, $X$ contains the $k$-simplex spanned by a set of $k+1$ vertices only if the vertices form a complete subgraph in $X$, called a \emph{$(k+1)$-clique}. For any dimension $k$, Kahle established in \cite{general} and \cite{sharp} sharp thresholds for $p$ for which there will be nontrivial $k$-th cohomology, showing that primarily cohomology will w.h.p.\ be nontrivial in just one dimension. Kahle has proved numerous results concerning the behavior of $X(n,p)$, such as establishing a central limit theorem for the distribution of Betti numbers $\beta^k = \textnormal{dim}\left(H^k(X,\mathbb{Q})\right)$ with Meckes in \cite{clt}.

As we noted before, all these complexes are special cases of $X(n, p_1, p_2, \ldots)$. The random graph model $G(n,p)$ is identical to $X(n, p, 0, \ldots)$, $Y_k(n,p)$ corresponds to $X(n, 1, \ldots, 1, p_k=p, 0, \ldots)$, and clique complexes are the case $X(n, p_1, 1, \ldots)$. In fact, many of our results are achieved through a reworking of frameworks laid down in \cite{general} and \cite{sharp}. This appears to be the natural bridge between these models, and we show that often the results for specific models may be extended to this broader construction. Through this process we exhibit cohomological behavior unique to this model.

\end{subsection}

\begin{subsection}{Statement of Results}
Our theorems deal with the $(k-1)$-th homology or cohomology of $X(n, p_1, p_2, \ldots)$. Since the $(k-1)$-th (co)homology of a simplicial complex depends only on its $k$-skeleton, these theorems only depend on probabilities $p_1$ through $p_k$. Repeated application of our theorems for each dimension will often fully describe the cohomology of our random complex. The primary open problem from this work concerns the $(k-1)$-th homology of our complexes when $p_k = 1$, which we discuss following our statement of results.

We present a low-dimension example to give some intuition for where the inequalities in our theorems come from, as well as illustrate the potential for non-trivial cohomology in multiple dimensions simultaneously.
\begin{prop}
\label{1stlemma}
Let $X \sim X(n,p_1, p_2, \ldots)$ with $p_1, p_2, p_3 = n^{-3/8},$ then w.h.p.\ $H^1(X, \mathbb{Q}) \neq 0$ and $H^2(X, \mathbb{Q}) \neq 0$.
\end{prop}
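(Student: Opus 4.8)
The plan is to treat the two dimensions by genuinely different means, using throughout that over the field $\mathbb{Q}$ one has $H^{i}(X,\mathbb{Q})\cong H_i(X,\mathbb{Q})$, and that $H_i$ depends only on the $(i+1)$-skeleton of $X$, so only $p_1,p_2$ enter the $H^1$ claim and only $p_1,p_2,p_3$ the $H^2$ claim. Writing $f_j$ for the number of $j$-simplices, under $p_1=p_2=p_3=n^{-3/8}$ one computes $\E f_1=\binom n2 p_1\sim\tfrac12 n^{13/8}$, $\E f_2=\binom n3 p_1^{3}p_2\sim\tfrac16 n^{3/2}$, and $\E f_3=\binom n4 p_1^{6}p_2^{4}p_3\sim\tfrac1{24}n^{-1/8}$; these three numbers already explain everything.

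For $H^1$ I would use only a rank count on the chain complex:
\[
\dim H_1(X,\mathbb{Q})\;=\;\dim\ker\partial_1-\operatorname{rank}\partial_2\;\ge\;\big(f_1-n+1\big)-f_2,
\]
since $\dim\ker\partial_1=f_1-f_0+\beta_0\ge f_1-n+1$ for a graph on $n$ vertices and $\operatorname{rank}\partial_2\le f_2$. It then suffices to show $f_1$ dwarfs $f_2$. Concentration of $f_1$ is Chernoff; for $f_2$ a Chebyshev bound works, the only non-negligible off-diagonal term of the variance coming from pairs of triangles sharing an edge, which contributes $O(n^{4}p_1^{5}p_2^{2})=O(n^{11/8})=o((\E f_2)^{2})$. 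Hence w.h.p.\ $f_1=(1+o(1))\tfrac12 n^{13/8}$ and $f_2=O(n^{3/2})$, so the right-hand side above, and therefore $\dim H_1(X,\mathbb{Q})$, tends to infinity; in particular $H^1(X,\mathbb{Q})\neq0$ w.h.p.

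For $H^2$ the same count is useless, since $f_2<f_1$ makes the analogous bound vacuous, so I would instead exhibit an explicit nonzero $2$-cycle. Because $\E f_3\to0$, Markov's inequality gives that w.h.p.\ $X$ has \emph{no} $3$-simplices, so $\partial_3=0$ and $H_2(X,\mathbb{Q})=\ker\partial_2$: every nonzero $2$-cycle is automatically a nonzero class. It therefore suffices to find a hollow tetrahedron, i.e.\ four vertices spanning a $K_4$ in the $1$-skeleton all four of whose triangular faces lie in $X$ (the absent $3$-cell being automatic). The expected number of these is $\binom n4 p_1^{6}p_2^{4}(1-p_3)\sim\tfrac1{24}n^{1/4}\to\infty$, and a second-moment estimate applies: two such $4$-sets sharing at most one vertex are independent, while the sharing-two and sharing-three cases contribute $O(n^{6}p_1^{11}p_2^{8})=O(n^{-9/8})$ and $O(n^{5}p_1^{9}p_2^{7})=O(n^{-1})$, both $o$ of the square of the mean. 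So w.h.p.\ such a configuration exists on some vertices $a,b,c,d$, and then $[bcd]-[acd]+[abd]-[abc]$ is a nonzero element of $\ker\partial_2=H_2(X,\mathbb{Q})$, giving $H^2(X,\mathbb{Q})\neq0$ w.h.p.

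The only real work is the two second-moment computations, i.e.\ verifying that the edge- and face-sharing pairs are genuinely of lower order; this is routine but must be done carefully. The conceptual content — and the reason the example is instructive — is that at this common value of the $p_i$ the complex has many more $1$-cycles than $2$-cells (so $H^1$ is forced by sheer numerology), yet sits just above the appearance threshold for hollow tetrahedra and just below the appearance threshold for $3$-cells, so that $H^2$ must be witnessed by a small cycle that the complex has no cell available to fill.
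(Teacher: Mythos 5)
Your proposal is correct, but it reaches the two conclusions by a route genuinely different from the paper's. The paper proves both parts by exhibiting local subcomplexes with a \emph{free} face: for $H^1$, a triangle with unfilled $2$-cell whose first edge lies in no $2$-cell of $X$; for $H^2$, a hollow tetrahedron whose first triangle lies in no $3$-cell; first- and second-moment estimates show each occurs w.h.p., and the free face guarantees the resulting cycle is not a boundary, giving a $\Z$-summand in $H_1$ and $H_2$. Your $H^1$ argument instead is the face-counting bound $\beta^1 \geq f_1 - n + 1 - f_2$ together with concentration of $f_1$ and $f_2$ — essentially the paper's Section 6 argument (first part of Theorem \ref{middle}) specialized to $k=2$, which even yields the stronger quantitative statement $\beta^1 \to \infty$. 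Your $H^2$ argument uses the same witness as the paper (a hollow tetrahedron, with the matching moment computations $\E \approx \tfrac{1}{24}n^{1/4}$ and lower-order contributions from pairs sharing an edge or a triangle), but replaces the ``free first triangle'' device by the global observation that $\E f_3 = \Theta(n^{-1/8}) \to 0$, so w.h.p.\ $\partial_3 = 0$ and every nonzero $2$-cycle is automatically homologically nontrivial. What the paper's free-face device buys is robustness: it proves the stronger claim over an open range of exponents (and, in Section 5, for all $k$) where $k$-cells can be plentiful, so your ``no $3$-cells at all'' shortcut would fail; what your route buys is a shorter, cleaner argument at these particular parameter values, where the model sits below the appearance threshold for $3$-cells, plus the polynomial growth of $\beta^1$. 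All your exponent computations and independence claims (pairs of triangles sharing at most a vertex, pairs of $K_4$'s sharing at most one vertex) check out, so the proof is sound as written.
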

\begin{proof}
We outline a proof of the stronger statement that if $$p_2,p_3 \neq 1,\ 6\alpha_1 + 4 \alpha_2 < 4, \textnormal{ and } 1 \leq 2\alpha_1 + \alpha_2$$ then w.h.p.\ $H^1(X, \mathbb{Q}) \neq 0$ and $H^2(X, \mathbb{Q}) \neq 0$.

Within this proof, and later in Section $5$, we consider the appearance of certain subcomplexes in $X$. First, we establish the presence of triangles with an unfilled $2$-face whose first edge, determined lexicographically, is not part of any $2$-face in $X$. Our complex is defined on the vertex set $[n]$, and for any $j \in \binom{[n]}{3}$ we let $A_j$ denote the event that the vertex set corresponding to $j$ forms such a subcomplex. Using independence, this has probability
\begin{equation*}
\P \left[ A_j \right] = p_1^3 \left(1-p_2 \right) \left(1-p_1^2p_2\right)^{n-3}.
\end{equation*}
The first two terms require the three edges are in $X$ while the $2$-simplex itself is not present. The last term ensures our first edge does not form a $2$-simplex with any of the $n-2$ remaining vertices. 

Letting $M_1$ denote the number of such subcomplexes in $X$, by linearity of expectation
\begin{equation*}
\E [ M_1] = \sum_{j \in \binom{[n]}{3}} \P[ A_j] = \binom{n}{3}p_1^3\left(1-p_2 \right) \left(1-p_1^2p_2\right)^{n-3}.
\end{equation*}
Using standard first moment techniques we see, for large enough $n$,
\begin{align*}
\E [ M_1 ] &\approx \frac{n^3}{6} n^{-3\alpha_1} \left( 1 - p_2 \right) \left( 1 - n^{-(2\alpha_1 + \alpha_2)} \right)^{n} \\
&\approx \frac{1}{6} n^{3-3\alpha_1} \left( 1 - p_2 \right) e^{-n^{1 - (2\alpha_1 + \alpha_2)}}.
\end{align*}
The last  two terms are $\Theta (1)$ when $p_2 \neq 1$ and $1 \leq 2\alpha_1 + \alpha_2$, then $\alpha_1 < 1$ implies $\E[M_1] \rightarrow \infty$. Second moment arguments, detailed in Appendix A, then show that w.h.p.\ $M_1 > 0$.

We now show the existence of tetrahedrons with unfilled $3$-face and first triangle not contained in any $3$-face. For each $l \in \binom{[n]}{3}$, let $B_l$ be the event that the vertices $l$ form such a subcomplex in $X$. Similar considerations show
\begin{equation*}
\P \left[ B_l \right] = p_1^6 p_2^4 \left(1-p_3 \right) \left(1-p_1^3 p_2^3 p_3 \right)^{n-4}.
\end{equation*}
Letting $M_2$ denote the total number of such subcomplexes in $X$, linearity of expectation shows
\begin{equation*}
\E\left[ M_2 \right] = \sum_{l \in \binom{[n]}{4}} \P[B_l] = \binom{n}{4} p_1^6 p_2^4 \left(1-p_3\right) \left(1-p_1^3 p_2^3 p_3 \right)^{n-4}.
\end{equation*}
It follows that if $p_3 \neq 1$, $6 \alpha_1 + 4 \alpha_2 < 4$, and $1 \leq 3\alpha_1 + 2 \alpha_2 +\alpha_3$, then $\E[M_2] \rightarrow \infty$. Second moment calculations establish that w.h.p.\ $M_2 > 0$.

Combining the two sets of requirements on $p_i$ yields that whenever $p_2, p_3 \neq 1$, $1\leq 2\alpha_1 +\alpha_2$, and $6 \alpha_1 + 4 \alpha_2 < 4$ w.h.p.\ $M_1, M_2>0$. Each such subcomplex can be seen to generate a non-trivial $\Z$-summand in the $1$ or $2$-homology, respectively. Thus w.h.p.\ $H_1(X, \Z) \neq 0$ and $H_2(X,\Z) \neq 0$, and our result follows by the Universal Coefficients Theorem, covered in the next section.
\end{proof}

As with clique complexes, the $(k-1)$-cohomology of $X(n, p_1, p_2, \ldots)$ has two phase transitions. We begin with no $(k-1)$-simplices and trivial cohomology, as our probabilities increase cohomology appears, then eventually the $p_i$ become too large and it disappears. Our work revolves around describing the thresholds for these transitions.

The following result establishes when the probabilities are sufficiently large that we will have trivial cohomology.

\begin{thm}
\label{upper}
Let $X \sim X(n, p_1, p_2, \ldots)$ with $p_i = n^{-\alpha_i}$ and $\alpha_i \geq 0$ for all $i$. If
\begin{equation} \label{fofree}
\sum_{i=1}^{k} \alpha_i \binom{k}{i} < 1
\end{equation}
then w.h.p.\ $H^{k-1} ( X, \mathbb{Q}) = 0$.
\end{thm}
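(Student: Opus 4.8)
The plan is to show that under condition \eqref{fofree}, with high probability every $(k-1)$-cocycle is a coboundary. The natural strategy, following the cohomology-vanishing arguments of Kahle in \cite{general} and \cite{sharp} for clique complexes and of Meshulam--Wallach in \cite{randk}, is to work with a suitable notion of "cohomological connectivity." First I would reduce to a statement about the $k$-skeleton, which is legitimate since $H^{k-1}$ depends only on the $k$-skeleton. Then I would show that w.h.p.\ the $k$-skeleton is sufficiently rich: in particular I expect condition \eqref{fofree} to be exactly the statement that the \emph{expected number of $k$-faces through a fixed $(k-1)$-face} tends to infinity — indeed, a $(k-1)$-face $\sigma$ together with an extra vertex $v$ forms a $k$-face when all $\binom{k}{i}$ of the new $i$-faces ($i=1,\dots,k$) are present, which happens with probability $\prod_{i=1}^k p_i^{\binom{k}{i}} = n^{-\sum \alpha_i \binom{k}{i}}$, so the expected number of such completions is $\asymp n^{1-\sum \alpha_i\binom{k}{i}} \to \infty$.

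The key steps, in order, would be: (1) Establish that w.h.p.\ every $(k-1)$-face of $X$ lies in many $k$-faces — more precisely, that the "link" of every $(k-1)$-face, viewed as a set of vertices, is large and well-distributed; a first- and second-moment computation handles the count, and a union bound over the $\binom{n}{k}$ potential $(k-1)$-faces requires the tail estimate to beat $n^k$, which should follow from a Chernoff/Janson-type concentration bound given that the expected count is polynomial in $n$. (2) Set up the cocycle-killing iteration: given a $\mathbb{Q}$-valued $(k-1)$-cocycle $\phi$, one wants to subtract coboundaries to drive its support to zero. The standard approach is to define, for each $(k-1)$-face $\tau$, a correction that exploits the many $k$-faces containing $\tau$; the cocycle condition forces rigid relations among the values of $\phi$ on faces sharing a $(k-2)$-face, and high connectivity of the relevant link graphs propagates these relations until $\phi$ is seen to be a coboundary. (3) Conclude by an expander/isoperimetric argument on the link complexes that the only cocycle surviving the reduction is trivial, hence $H^{k-1}(X,\mathbb{Q}) = 0$ w.h.p.

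The main obstacle I anticipate is step (2)--(3): carrying out the cocycle-reduction uniformly requires that not just the $(k-1)$-faces but also lower-dimensional faces have links that are connected (or spectrally expanding) in the appropriate sense, and in this multi-parameter model the links are themselves generalized random complexes with shifted parameters, so one must track how condition \eqref{fofree} for dimension $k$ implies the analogous connectivity inputs in all the auxiliary lower-dimensional links. Verifying that \eqref{fofree} is the \emph{right} (i.e.\ sufficient) hypothesis for all these nested requirements — rather than merely for the top-dimensional face count — is where the real work lies; I would expect to prove a single clean lemma asserting that \eqref{fofree} forces every relevant link to be nonempty and connected w.h.p., and then feed that into the Meshulam--Wallach-style coboundary argument essentially verbatim. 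The first-moment heuristic above strongly suggests \eqref{fofree} is tight, since at $\sum \alpha_i\binom{k}{i} = 1$ the expected number of $k$-faces per $(k-1)$-face is $\Theta(1)$ and isolated $(k-1)$-faces (those in no $k$-face) begin to appear, producing nontrivial cohomology — matching the lower-threshold phenomenon illustrated in Proposition \ref{1stlemma}.
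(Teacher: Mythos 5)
Your first-moment heuristic is the right one and matches the paper's starting point: \eqref{fofree} is precisely the condition under which every $(k-1)$-face (and, by monotonicity of the binomial sums, every lower face) w.h.p.\ lies in a face of one dimension higher, which is the paper's Lemma \ref{nofree} and the purity statement for the $k$-skeleton. But the heart of the theorem --- the mechanism that actually kills cocycles --- is missing from your proposal. You defer it to a ``Meshulam--Wallach-style coboundary argument essentially verbatim'' or an unspecified ``expander/isoperimetric argument,'' and you yourself flag this as where the real work lies. That deferral is a genuine gap, and not a cosmetic one: the Meshulam--Wallach cocycle-counting scheme in \cite{randk} leans heavily on the $(k-1)$-skeleton being complete and deterministic, so that one can enumerate cochains and count faces on which a cocycle is ``seen''; in the present model the $(k-1)$-skeleton is itself a sparse random complex, and it is exactly this obstruction that pushes \cite{sharp} and the paper to a different tool. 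There is no indication in your sketch of how the counting would be salvaged, so the proposal does not yet constitute a proof strategy that is known to close.

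What the paper does instead is Garland's method via \cite[Theorem 2.1]{garland}: after establishing that $X_k$ is w.h.p.\ pure $k$-dimensional (repeated use of Lemma \ref{nofree}), it shows that the link of every $(k-2)$-face $\sigma$ is distributed as an Erd\H{o}s--R\'enyi graph $G(L_\sigma,p')$ with $p'=\prod_1^k p_i^{\binom{k-1}{i-1}}$ and $L_\sigma\sim\Bin(n-k+1,\bar p)$, $\bar p=\prod_1^{k-1} p_i^{\binom{k-1}{i}}$, then invokes the spectral gap theorem of \cite{spec} to get $\lambda_2[\mathrm{lk}(\sigma)]>1-1/k$ with failure probability $o(L_\sigma^{-\delta})$. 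The delicate point your plan does not engage with is quantitative: the failure probability must be union-bounded over all $\approx\binom{n}{k-1}\prod_1^{k-2}p_i^{\binom{k-1}{i+1}}$ faces of codimension two, which forces the specific choice of $\delta$ in \eqref{delta} and a Chernoff lower bound $L_\sigma\geq n\bar p/2$ holding simultaneously for all such faces; verifying that \eqref{fofree} alone supplies enough room for this (note $\bar p\,p'=\prod_1^k p_i^{\binom{k}{i}}$, so the exponent $1-\sum_1^k\alpha_i\binom{k}{i}>0$ is exactly what makes the links dense enough) is the substantive content of Sections 3--4. Your step (1) concentrates on links of $(k-1)$-faces, whereas the condition \cite[Theorem 2.1]{garland} actually needs is about links of $(k-2)$-faces; until that is set up and the union bound closed, the argument is not complete.
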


We prove this threshold is sharp by showing in the next theorem that on the other side of \eqref{fofree} cohomology is nontrivial. Moreover, the second regime for which cohomology exists is crucial to results such as Proposition \ref{1stlemma}, where $H^k(X,\mathbb{Q}) \neq 0$ for several $k$.

\begin{thm}
\label{middle}
Let $X \sim X(n, p_1, p_2, \ldots)$ with $p_i = n^{-\alpha_i}$, $\alpha_i \geq 0$ for all $i$, and
\begin{equation} \label{midrange1}
1 \leq \sum_{i=1}^{k} \alpha_i \binom{k}{i}.
\end{equation}
If
\begin{equation} \label{midrange2}
\sum_{i=1}^{k-1} \alpha_i \binom{k-1}{i} < 1
\end{equation}
then w.h.p.\ $H^{k-1} ( X, \mathbb{Q} ) \neq 0$. Moreover, if $p_k \neq 1$ we can relax this bound to
\begin{equation} \label{midrange3}
 \sum_1^{k-1} \alpha_i \binom{k+1}{i+1} < k+1.
\end{equation}
\end{thm}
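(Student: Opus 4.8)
The plan is to exhibit nontrivial $(k-1)$-cohomology by constructing an explicit cocycle, using the same "partial simplex" gadget that powered Proposition~\ref{1stlemma}. Concretely, I would look for a $(k-1)$-simplex $\sigma$ (say on vertices $\{1,\dots,k\}$, sitting below an unfilled $k$-face) with the property that $\sigma$ is not contained in \emph{any} $k$-simplex of $X$, and moreover that the lexicographically-first such $(k-1)$-simplex in some complete $k$-clique-boundary configuration is isolated in this sense. Such a $\sigma$ supports the indicator cochain $\mathbf{1}\{\sigma\} \in C^{k-1}(X,\mathbb{Q})$, which is automatically a cocycle (its coboundary is supported on $k$-faces containing $\sigma$, of which there are none) and which is not a coboundary (any $(k-2)$-cochain's coboundary evaluated on $\sigma$ would have to come from $(k-1)$-faces adjacent to $\sigma$, and one checks the relevant linear-algebra obstruction exactly as in the genus-generating argument of Proposition~\ref{1stlemma}). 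So the topological content reduces to a first/second-moment existence statement.

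Next I would set up the first moment. For a candidate vertex set of size $k+1$, the probability that its boundary $(k-1)$-skeleton is present but the $k$-face is absent is (by the inductive independence in the model) a product of $p_i$'s over all faces of dimension $\le k-1$ times $(1-p_k)$; the exponent on $n^{-1}$ coming from this factor is precisely $\sum_{i=1}^{k}\alpha_i\binom{k+1}{i+1}$, since a $(k+1)$-set has $\binom{k+1}{i+1}$ faces of dimension $i$. Then I multiply by the "isolation" factor: the lexicographically-first $(k-1)$-face $\sigma$ must fail to complete a $k$-simplex with any of the remaining $n-k$ vertices, contributing $(1-p_1^{\,?}p_2^{\,?}\cdots p_k)^{\,n-k}$, where the exponents count the faces of $\sigma \cup \{v\}$ not already in $\sigma$. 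For this tail to be $\Theta(1)$ rather than vanishing we need the exponent of $n$ inside, namely $1 - \sum_{i=1}^{k}\alpha_i\binom{k}{i}$, to be $\le 0$ — and that is exactly hypothesis \eqref{midrange1}. Under \eqref{midrange3} (with the $p_k\neq 1$ caveat making the $(1-p_k)$ factor $\Theta(1)$), the $\binom{n}{k+1}$ choices give $\mathbb{E}[\#\sigma] \to \infty$; under the weaker regime one uses \eqref{midrange2} with the cruder gadget (boundary of dimension $\le k-1$ on a $k$-set, exponent $\sum\alpha_i\binom{k}{i}$, but wait — one wants the cleaner count, so here I'd instead just demand the $(k-1)$-simplex plus its codimension-one neighbors, giving exponent $\sum_{i=1}^{k-1}\alpha_i\binom{k-1}{i}$ from the link, matching \eqref{midrange2}). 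The second moment is then a routine computation: overlapping pairs of candidate configurations share a lower-dimensional face, the contribution of each overlap type is lower order, and one concludes $\mathrm{Var} = o(\mathbb{E}^2)$, hence the count is positive w.h.p.\ by Chebyshev — this is the part deferred to an appendix in the Proposition and I would do the same.

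The main obstacle is the cohomological step, not the counting: I must verify that the indicator cocycle on an isolated $(k-1)$-face is genuinely nonzero in $H^{k-1}(X,\mathbb{Q})$, i.e.\ that it is not hit by $\delta$ from $C^{k-2}$. This is cleanest if $\sigma$'s boundary $(k-2)$-faces are themselves arranged so that no $(k-2)$-cochain can reproduce $\mathbf{1}\{\sigma\}$ under $\delta$; the safe way is to build the gadget so that $\sigma$ is, in an appropriate sense, a "free" face up to lower dimension — e.g.\ require that some $(k-2)$-subface of $\sigma$ lies in no other $(k-1)$-simplex of $X$, which forces the would-be primitive to be constant along a chain and ultimately yields a contradiction with the isolation of $\sigma$. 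Incorporating this extra requirement multiplies the first moment by another $\Theta(1)$-or-better tail factor of the same shape, so it does not change the thresholds \eqref{midrange2}–\eqref{midrange3}; but getting the linear algebra airtight — analogous to "each such subcomplex generates a non-trivial $\Z$-summand" in Proposition~\ref{1stlemma} — is where the real care is needed. Once that is in hand, the Universal Coefficients Theorem (or directly working over $\mathbb{Q}$) finishes the argument.
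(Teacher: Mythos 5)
Your treatment of the second statement (the $p_k\neq 1$ case under \eqref{midrange1} and \eqref{midrange3}) is essentially the paper's argument: count unfilled boundaries of $k$-simplices whose lexicographically first $(k-1)$-face is free, with the same first-moment exponents and a deferred second moment. But the step you flag as the "real care" is handled much more simply in the paper, and your proposed patch for it is flawed. The paper works in homology: the signed sum of the $(k-1)$-faces of the unfilled boundary is a cycle, and since the distinguished face $\tau$ lies in no $k$-simplex of $X$, the boundary of any $k$-chain has zero $\tau$-coefficient, so the cycle is not a boundary; Universal Coefficients then gives $H^{k-1}(X,\mathbb{Q})\neq 0$. (Equivalently, your cocycle $\mathbf{1}\{\sigma\}$ pairs nontrivially with this cycle, so it cannot be a coboundary.) Your alternative requirement --- that some $(k-2)$-subface of $\sigma$ lie in no other $(k-1)$-simplex --- is both unnecessary and mispriced: its isolation factor is $\bigl(1-\prod_1^{k-1}p_i^{\binom{k-1}{i}}\bigr)^{n-O(1)}\approx \exp\bigl(-n^{1-\sum_1^{k-1}\alpha_i\binom{k-1}{i}}\bigr)$, which is not "$\Theta(1)$-or-better" but superpolynomially small whenever $\sum_1^{k-1}\alpha_i\binom{k-1}{i}<1$; in that regime w.h.p.\ every $(k-2)$-face lies in polynomially many $(k-1)$-faces, so no such configuration exists at all.

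The genuine gap is the first statement, under \eqref{midrange1} and \eqref{midrange2} with $p_k=1$ allowed. There your gadget cannot occur (no unfilled $k$-boundaries exist when $p_k=1$), and the fallback you sketch ("cruder gadget ... matching \eqref{midrange2}") never becomes a configuration whose presence certifies a nontrivial class: you do not show that the indicator of a free $(k-1)$-face fails to be a coboundary without a paired homology cycle, and your $(k-2)$-isolation device is unavailable for the reason above. The paper proves this half by a different mechanism entirely: the linear-algebra inequality $\beta^{k-1}\geq f_{k-1}-f_k-f_{k-2}$, together with the computations that $\E[f_k]/\E[f_{k-1}]=o(1)$ (from \eqref{midrange1}) and $\E[f_{k-2}]/\E[f_{k-1}]=o(1)$ (from \eqref{midrange2}), plus second-moment concentration of the face counts, giving w.h.p.\ $\beta^{k-1}\sim f_{k-1}>0$ (indeed polynomially large). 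Some argument of this type, or another way to produce classes when $p_k=1$, is required; as written your proposal does not prove the first half of the theorem.
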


A common question to ask concerning phase transitions is what happens at the boundary between phases. Allowing the $p_i$ to be more varied functions of $n$, we identify this critical region and establish a limit theorem for the Betti number. Combined with Theorems \ref{upper} and \ref{middle}, this proves a sharp threshold for vanishing cohomology for all possible $p_i$.

\begin{thm} \label{bettiboundary}
Let $X \sim X(n,p_1, p_2, \ldots)$ with $$p_i = \left(\rho_1 \log n + \rho_2 \log \log n + c \right)^{\beta_i}n^{-\alpha_i}$$ such that $$\rho_1 = k - \sum_1^{k-1} \alpha_i \binom{k}{i+1} \textnormal{, } \rho_2 = \sum_1^{k-1} \beta_i \binom{k}{i+1} \textnormal{ and }\sum_1^k \alpha_i \binom{k}{i} = 1 = \sum_1^k \beta_i \binom{k}{i}.$$ Then $\beta^{k-1}$ the $(k-1)$-th Betti number approaches a Poisson distribution
\begin{equation*}
\beta^{k-1} \rightarrow \Poi (\mu)
\end{equation*}
with mean
\begin{equation*}
\mu = \frac{\rho_1 ^{\rho_2} e^{-c}}{k!}.
\end{equation*}

\end{thm}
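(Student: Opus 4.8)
The plan is to apply the method of moments: I will show that the number $Z$ of "isolated" $(k-1)$-cohomology-generating subcomplexes of a suitable kind has asymptotically Poisson distribution with mean $\mu$, and that w.h.p.\ $\beta^{k-1}$ equals this count. The right subcomplex to track, suggested by the proof of Proposition~\ref{1stlemma} and by the structure of condition~\eqref{fofree}, is a single $(k-1)$-simplex $\sigma$ together with all its boundary data, such that: (i) every face of $\sigma$ of each dimension $i$ is present (a factor $p_i^{\binom{k}{i+1}}$, so $p_1^{\binom{k}{2}}p_2^{\binom{k}{3}}\cdots p_{k-1}^{\binom{k}{k}}$), (ii) $\sigma$ itself is \emph{not} filled in as a $(k-1)$-simplex --- but in the critical regime $p_{k-1}\to 1$ is impossible here since $\alpha_{k-1}\ge 0$ and $\beta$'s contribute only logarithmic factors, so actually the relevant "face" to leave unfilled is the $(k-1)$-simplex $\sigma$ viewed as a potential boundary of $k$-simplices, i.e.\ $\sigma$ lies in no $k$-simplex of $X$, and (iii) $\sigma$ is in no $(k-1)$-simplex other than itself spanning a common $k$-set, i.e.\ $\sigma$ is "isolated" at the top. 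Concretely $\sigma=\{v_0,\dots,v_{k-1}\}$ contributes a generator to $H^{k-1}$ when the full $(k-1)$-skeleton on these vertices is present, $\sigma$ is present, and $\sigma$ is contained in no $k$-face; the probability that a fixed such vertex set realizes this is
\[
p_1^{\binom{k}{2}}p_2^{\binom{k}{3}}\cdots p_{k-1}p_{k-1}\cdot\bigl(1-p_1^{\binom{k}{1}}\cdots p_{k-1}^{\binom{k}{k-1}}p_k\bigr)^{n-k},
\]
and by linearity $\E Z=\binom{n}{k}$ times this. The exponent arithmetic is exactly calibrated: $\prod_{i=1}^{k-1}p_i^{\binom{k}{i+1}}=\prod_i (\rho_1\log n+\cdots)^{\beta_i\binom{k}{i+1}} n^{-\alpha_i\binom{k}{i+1}}=(\rho_1\log n+\rho_2\log\log n+c)^{\rho_2}\,n^{-(k-\rho_1)}$ using the definitions of $\rho_1,\rho_2$, and the bracketed "no $k$-face" term has $\prod_{i=1}^{k}p_i^{\binom{k}{i}}=(\log$-factor$)^{1}\,n^{-1}$ by the constraints $\sum\alpha_i\binom{k}{i}=\sum\beta_i\binom{k}{i}=1$. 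Hence $(1-p_1^{\binom{k}{1}}\cdots p_k)^{n-k}\approx\exp(-(\rho_1\log n+\rho_2\log\log n+c))= e^{-c}(\log n)^{-\rho_2}n^{-\rho_1}$ up to lower-order corrections, and multiplying by $\binom{n}{k}\sim n^k/k!$ and by the face-present factor $n^{-(k-\rho_1)}(\rho_1\log n)^{\rho_2}$ gives $\E Z\to \rho_1^{\rho_2}e^{-c}/k!=\mu$, as claimed.

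Next I would establish Poisson convergence of $Z$ by the method of factorial moments (Brun's sieve): it suffices to show $\E\binom{Z}{r}\to \mu^r/r!$ for every fixed $r\ge 1$. Expanding $\E\binom{Z}{r}=\sum \P[\text{vertex sets }S_1,\dots,S_r\text{ all realize the configuration}]$ over unordered $r$-tuples of distinct $k$-subsets, the dominant contribution comes from $r$-tuples that are pairwise vertex-disjoint (or more precisely, that share no common $(k-1)$-face in a way that forces extra dependency): for such families the events are nearly independent because the "no $k$-face through $S_j$" constraints involve the $n-k$ extension vertices, and even if two $S_j$'s share vertices the number of such degenerate tuples is lower order by a union bound --- there are $\binom{n}{k}^r/r! (1+o(1))$ disjoint-ish tuples and $O(n^{kr-1})$ degenerate ones. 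For disjoint $S_1,\dots,S_r$ the joint probability factors as $(1+o(1))\prod_{j=1}^r \P[A_{S_j}]$ provided the overlap in "extension vertex" constraints is negligible, which it is since each such constraint is a product over $\binom{k}{i}$-sized subsets and the correction from double-counting shared extension vertices contributes a factor $1+O((\log n)^{O(1)}/n)=1+o(1)$. This yields $\E\binom{Z}{r}\to\mu^r/r!$ and hence $Z\Rightarrow\Poi(\mu)$.

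The main obstacle is the third step: upgrading "$Z\Rightarrow\Poi(\mu)$" to "$\beta^{k-1}\Rightarrow\Poi(\mu)$". This requires showing (a) each counted configuration $\sigma$ genuinely contributes an independent $\mathbb{Q}$-summand to $H^{k-1}(X,\mathbb{Q})$ --- an isolated $(k-1)$-simplex lying in no $k$-face and with full boundary skeleton is a nonbounding cocycle, and distinct isolated such simplices are cohomologically independent, which I would prove by exhibiting an explicit splitting of the cochain complex off each such $\sigma$; and (b) w.h.p.\ there are \emph{no other} sources of $(k-1)$-cohomology, i.e.\ the "core" $k$-complex obtained by deleting these isolated simplices has vanishing $H^{k-1}$. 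Part (b) is the crux and is precisely where one re-runs the machinery behind Theorem~\ref{upper}: on the core, the effective density is just barely above the threshold $\sum\alpha_i\binom{k}{i}=1$ by logarithmic factors chosen exactly so that the expected number of these isolated configurations is $\Theta(1)$, and one must show via a cocycle-counting / cocycle-killing argument (bounding the number of minimal non-trivial cocycles not supported on an isolated simplex and showing their expected number tends to $0$) that no other cohomology survives. I would adapt the Meshulam--Wallach / Kahle cocycle-counting estimates, using the multi-parameter structure to bound $\P[\text{a given support set carries a cocycle}]$ and summing over supports; the logarithmic fine-tuning of the $p_i$ makes this sum converge to $0$ for all "spread-out" supports while leaving exactly the Poisson$(\mu)$ contribution from singletons. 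Combining (a), (b), and the Poisson limit for $Z$ gives $\beta^{k-1}=Z$ w.h.p.\ and therefore $\beta^{k-1}\to\Poi(\mu)$.
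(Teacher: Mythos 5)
Your overall route is the same as the paper's: the configurations you count are exactly the free $(k-1)$-faces (the $N_{k-1}$ of Section 3), your factorial-moment/Brun's sieve step is precisely Lemma \ref{boundary1} (carried out in Appendix B), and your plan for part (b) --- delete these faces and re-run the machinery of Theorem \ref{upper} on what remains --- is exactly how the paper shows no other $(k-1)$-cohomology survives (a Garland/spectral-gap re-run on $X'=X\setminus\{\sigma_1,\dots,\sigma_m\}$, with Lemmas \ref{linkverts} and \ref{forspec} refined to the critical window, not a Meshulam--Wallach cocycle count, which would be considerably harder to adapt here). A minor slip: your displayed probability carries an extra factor of $p_{k-1}$; the presence of $\sigma$ itself is already the factor $p_{k-1}^{\binom{k}{k}}$ inside $\prod_{i=1}^{k-1}p_i^{\binom{k}{i+1}}$.

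The genuine gap is in your step (a). The claim that ``an isolated $(k-1)$-simplex lying in no $k$-face and with full boundary skeleton is a nonbounding cocycle'' is false as a deterministic statement: if some $(k-2)$-face $\tau$ of $\sigma$ has $\sigma$ as its only $(k-1)$-coface (the pendant situation; for $k=2$, a pendant edge), then $\delta^{k-2}(\mathbf{1}_\tau)=\pm\phi_\sigma$, so $\phi_\sigma$ is a coboundary and contributes nothing to $H^{k-1}$; and even ruling that out one must exclude arbitrary $(k-2)$-cochains $\lambda$ with $\delta^{k-2}\lambda=\sum_i a_i\phi_i$. Hence no purely formal ``splitting of the cochain complex off $\sigma$'' can exist --- probabilistic input about the ambient complex is unavoidable. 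The paper supplies it in two steps: the estimate on $R_{k-2}$, the number of $(k-2)$-faces contained in at most $m$ many $(k-1)$-faces, shows w.h.p.\ no such sparse $(k-2)$-face exists; and for a general $\lambda$ one passes to $X'$, observes $\delta^{k-2}\lambda=0$ there, and invokes $H^{k-2}(X',\mathbb{Q})=0$ (the Theorem \ref{upper} machinery one dimension down, using $\sum_1^{k-1}\alpha_i\binom{k-1}{i}<1$) to reach a contradiction. Your proposal needs an argument of this kind to certify that the $m$ counted faces really produce $m$ independent classes, i.e.\ that $\beta^{k-1}=N_{k-1}$ rather than merely $\beta^{k-1}\le N_{k-1}$-type control.
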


We also provide a lower bound for the transition to nontrivial homology. This bound, combined with the second part of Theorem \ref{middle}, is shown to be sharp when $p_k \neq 1$.
\begin{thm}
\label{lower}
Let $X \sim X(n, p_1, p_2, \ldots)$ with $p_i = n^{-\alpha_i}$ and $\alpha_i \geq 0$ for all $i$.  If
\begin{equation} \label{noboundary}
k+1 < \sum_1^{k-1} \alpha_i \binom{k+1}{i+1}
\end{equation}
then w.h.p.\ $H_{k-1} (X, \mathbb{Z}) = 0$.
\end{thm}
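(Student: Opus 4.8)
The plan is to prove something a priori stronger: that w.h.p.\ the group of $(k-1)$-cycles $Z_{k-1}(X;\Z)$ is trivial. Since $B_{k-1}(X)\se Z_{k-1}(X)$ this forces $H_{k-1}(X;\Z)=0$. As $C_{k-1}(X)$ is free of finite rank, $Z_{k-1}(X;\Z)=\ker\partial_{k-1}$ vanishes exactly when $\partial_{k-1}$ stays injective after tensoring with $\mathbb{Q}$, so it suffices to show that w.h.p.\ $X$ carries no nonzero rational $(k-1)$-cycle. Any such cycle contains one of inclusion-minimal support; writing $L$ for the subcomplex of $X$ spanned by the faces of such a cycle --- a \emph{minimal $(k-1)$-cycle} --- the theorem reduces to the claim that w.h.p.\ $X$ contains no minimal $(k-1)$-cycle as a subcomplex. (The boundary of any filled $k$-face is the minimal $(k-1)$-cycle $\partial\Delta^k$, so this simultaneously recovers $C_k(X)=0$.) I would establish this by a first moment estimate.

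The relevant probabilities are immediate from the construction of the model: for a finite complex $L$ with $f_i(L)$ faces of dimension $i$, the indicator events $\{\sigma\in X\}$, taken in order of increasing dimension, are independent given the presence of lower faces, so $\P[L\se X]=\prod_{i=1}^{k-1}p_i^{\,f_i(L)}=n^{-\sum_{i=1}^{k-1}\alpha_i f_i(L)}$, and there are at most $n^{f_0(L)}$ embeddings of $L$ into $[n]$. Summing over isomorphism types of minimal $(k-1)$-cycles,
\[
\E\bigl[\#\{\text{minimal }(k-1)\text{-cycles in }X\}\bigr]\ \le\ \sum_{L}\, n^{\,f_0(L)-\sum_{i=1}^{k-1}\alpha_i f_i(L)}.
\]
For $L=\partial\Delta^k$ one has $f_0=k+1$ and $f_i=\binom{k+1}{i+1}$, so the exponent is exactly $(k+1)-\sum_{i=1}^{k-1}\alpha_i\binom{k+1}{i+1}$, which is negative by hypothesis \eqref{noboundary}; let $\delta>0$ denote its absolute value. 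The content of the theorem is that $\partial\Delta^k$ is the \emph{least dense} minimal $(k-1)$-cycle, so that this single strict inequality leaves enough room for every type.

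To exploit this I would first prove a density estimate valid for every minimal $(k-1)$-cycle $L$:
\[
f_i(L)\ \ge\ \f{1}{k+1}\binom{k+1}{i+1}\,f_0(L),\qquad 1\le i\le k-1.
\]
For $k=2$ this just says a minimal $1$-cycle is a simple cycle ($f_1=f_0$). For $k\ge 3$ I would induct on $k$: if $v$ is a vertex of a minimal $(k-1)$-cycle then its link carries a nonzero $(k-2)$-cycle, hence a minimal one $L_v$, and minimal $(k-2)$-cycles have at least $k$ vertices (the extreme case $\partial\Delta^{k-1}$); counting, for each $i$, the $i$-faces of $L$ through $v$ against the $(i-1)$-faces of $L_v$ and applying the inductive hypothesis to $L_v$ gives the bound, using $\tf{1}{k+1}\binom{k+1}{i+1}=\tf{1}{i+1}\binom{k}{i}$. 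It follows that
\[
f_0(L)-\sum_{i=1}^{k-1}\alpha_i f_i(L)\ \le\ -\,\f{\delta}{k+1}\,f_0(L),
\]
so the ``slack'' in the exponent grows linearly in the number of vertices. For $k=2$ there is exactly one minimal-cycle type on each number of vertices, and the displayed sum is already a convergent geometric series in $n^{-\delta/3}$; Markov's inequality then finishes that case (and, unsurprisingly, recovers the familiar threshold $p_1<1/n$).

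\textbf{The main obstacle} is controlling the contribution of the ``large'' minimal $(k-1)$-cycles when $k\ge 3$: such a cycle on $m$ vertices may have order $m^2$ faces in each intermediate dimension, and the crude bounds both on how many such complexes there are and on how many ways they may be assembled face-by-face (each new top face attached along an existing $(k-2)$-face) are far too lossy to be absorbed by the density estimate as stated. What is needed is a sharper structural description of minimal cycles --- using not merely that every $(k-2)$-face lies in at least two top faces, but the full rigidity of inclusion-minimal support, namely that links are themselves minimal cycles one dimension down and that no proper sub-cycle occurs --- in order either to bound their number sharply or to produce a weighted first moment whose exponent is controlled in terms of the number of top faces as well as the number of vertices. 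This is the analogue, in the present multi-parameter model, of the minimal-cycle estimates underlying the vanishing results of \cite{general} and \cite{sharp}, and adapting those estimates to complexes whose faces of every intermediate dimension are themselves random is where I expect most of the work to lie; by comparison the reduction to ``no minimal cycle'', the density estimate, and the passage from $\mathbb{Z}$ to $\mathbb{Q}$ coefficients are routine.
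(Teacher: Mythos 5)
Your reduction (no nontrivial cycles, hence trivial homology), the probability computation for a fixed subcomplex, and the observation that $\partial\Delta^k$ sits exactly at the exponent $(k+1)-\sum_1^{k-1}\alpha_i\binom{k+1}{i+1}$ all match the paper. But the proposal stops at precisely the step that carries the content of the theorem, and you say so yourself: the first moment is a sum over isomorphism types of minimal $(k-1)$-cycles of \emph{unbounded} size, and a per-vertex slack of $n^{-\delta f_0/(k+1)}$ cannot absorb the superexponential growth (in the number of vertices or top faces) of the number of such types when $k\ge 3$. So as written this is a genuine gap, not a routine verification, and the ``sharper structural description of minimal cycles'' you anticipate as the fix is not in fact how the paper closes it.

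The paper's missing idea is to bound the \emph{vertex support} before any enumeration takes place. The hypothesis \eqref{noboundary} forces $\sum_1^{k-1}\alpha_i\binom{k-1}{i}>1$, because $\binom{k-1}{i}=\frac{(i+1)(k-i)}{k(k+1)}\binom{k+1}{i+1}\ge\frac{1}{k+1}\binom{k+1}{i+1}$; equivalently the per-vertex cost of growing a strongly connected pure $(k-1)$-complex, $n\prod_1^{k-1}p_i^{\binom{k-1}{i}}$, is $n^{-\epsilon}$ for some $\epsilon>0$. Lemma \ref{vsupp} then shows that for a suitable \emph{constant} $D$, w.h.p.\ $X$ contains no strongly connected pure $(k-1)$-dimensional subcomplex on $D+k$ vertices at all: order the top faces so each new one meets an earlier one in a $(k-2)$-face, note each new vertex contributes at least $\binom{k-1}{i}$ new $i$-faces, and union-bound over the finitely many isomorphism types on exactly $D+k$ vertices (any larger complex contains one of these). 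No minimality or cycle structure is used here, only strong connectivity, which is also why the enumeration problem disappears: once the support is bounded by a constant, only finitely many isomorphism types remain. The cycle condition is then used only for these small complexes, via the decomposition of any cycle into cycles with strongly connected support and the counting ``first $k$ vertices span a $(k-1)$-face, the last vertex lies in at least $k$ top faces (hence $\binom{k}{i}$ extra $i$-faces), each intermediate vertex contributes $\binom{k-1}{i}$,'' which bounds the appearance probability by $\bigl(n^{k+1}\prod_1^{k-1}p_i^{\binom{k+1}{i+1}}\bigr)\bigl(n\prod_1^{k-1}p_i^{\binom{k-1}{i}}\bigr)^{m-1}=o(1)$, with both factors small by \eqref{noboundary} and the derived inequality; a union bound over the constantly many types finishes. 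Your density estimate $f_i(L)\ge\frac{1}{k+1}\binom{k+1}{i+1}f_0(L)$ (which is also weaker per added vertex than the $\binom{k-1}{i}$ count the paper uses in intermediate dimensions) would only become useful after such a support bound, at which point it is no longer needed.
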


The proof of Theorem \ref{upper} is handled in Sections 3 and 4. The inequality \eqref{fofree} precisely ensures every $(k-1)$-simplex of $X$ is w.h.p.\ contained in a $k$-simplex, so no single face generates a non-trivial cocyle in $H^{k-1}(X)$. With this condition satisfied we prove the result by applying \cite[Theorem 2.1]{garland}, a result connecting spectral gap theory and the homology of simplicial complexes and presented in Section 2. Most of the work lies in showing the various hypotheses of the theorem are met by our complexes, for which we use \cite[Theorem 1.1]{spec}, a tool for bounding the spectral gap of Erd\H{o}s-R\'enyi random graphs.

Theorem \ref{middle} is proven in Sections 5 and 6. The statement for the range defined by \eqref{midrange1} and \eqref{midrange2} is shown by exhibiting that our complex will have far more $(k-1)$-dimensional faces than those in adjacent dimensions, so the kernel of the coboundary map is very large. In fact, the second moment argument used in the proof yields the stronger result that within this range of values our Betti number $\beta^{k-1}$ will grow polynomially in $n$. The result when \eqref{midrange1} and \eqref{midrange3} hold follows from the model used in Proposition \ref{1stlemma}: showing our complex will w.h.p.\ contain certain subcomplexes that generate nontrivial homological cycles.

In Section 7 we prove Theorem \ref{bettiboundary}. The hypotheses of the theorem articulate a range where we have a non-zero but finite expected number of $(k-1)$-faces not contained in a $k$-face. A factorial moment argument shows this number approaches a limiting distribution, a slight adaptation of the work in Section 4 then proves these faces generate the only nontrivial cocycles of dimension $k-1$.

Finally, the proof of Theorem \ref{lower} is found in Section 8. The subset \eqref{noboundary} defines when our complex will w.h.p.\ not contain the boundary of a $k$-simplex. We show this is the most likely subcomplex to appear in $X$ that generates a $(k-1)$-cycle. Thus, when $X$ w.h.p.\ does not contain the boundary of a $k$-simplex it will have no $(k-1)$-cycles.
\end{subsection}

\begin{subsection}{Discussion}
Primarily our results concern when $p_i = n^{- \alpha_i}$ with $\alpha_i \geq 0$ or $p_i = 0$ (here we say $\alpha_i = \infty$). This was done to make the theorem statements as concise as possible. Our threshold results extend easily to when $p_i$ are more varied functions of $n$. If $p_i = \omega_i n^{-\alpha_i}$ with $\omega_i(n) \rightarrow \infty$ and $\omega_i(n) = o(n^{\ep})$ for all $\ep>0$, then Theorems \ref{upper}, \ref{middle}, and \ref{lower} still hold provided the $\alpha_i$ do not lie on the boundary between two thresholds.

Our work on this multi-parameter model confirms it as the natural bridge between $X(n, p)$ and $Y_k(n,p)$. Our theorems imply all the analogous results for these complexes, when appropriate. The boundary between Theorems \ref{upper} and \ref{middle} is sharp when $p_i = n^{-\alpha_i}$, and combined with Theorem \ref{bettiboundary} establishes a sharp upper bound for vanishing cohomology that encompasses the analogous results for clique complexes \cite[Theorem1.1]{sharp} and Linial-Meshulam complexes \cite[Theorem 1.1]{randk}.

While our upper bounds are seen to be sharp, we have not fully characterized the lower threshold for vanishing homology. So long as $p_k \neq 1$ our bounds are sharp, and Kahle proved the correct bound for clique complex case in \cite{general}, but we have been unable to generalize his arguments or find another method. For now we leave this as an open problem. \\ \\
\textbf{Open Problem 1} What is the lower threshold for the vanishing of $H_{k-1} \left(X, \mathbb{Z}\right)$ when $p_k = 1$? \\ 

Noting $p_k = 1$ implies $X$ cannot contain the unfilled boundary of a $k$-simplex, the question likely reduces to understanding the smallest homological cycle that can appear in $X$. We suspect the answer is determined, perhaps uniquely, by the largest $l < k$, such that $p_l \neq 1$. Meanwhile, the Linial-Meshulam model lacks a lower threshold for vanishing $(k-1)$-homology, and $p_{k+1}= 0$ so our bounds for $H_k(Y_k(n,p))$ are sharp.

Another open problem concerns when integer homology vanishes in a specific dimension. \\ \\
\textbf{Open Problem 2} Does \eqref{fofree} in Theorem \ref{upper} imply that w.h.p.\ $H_{k-1}(X, \Z) = 0$? \\

We understand the phase transition for $H^{k-1}(X, \mathbb{Q})$ and have reason to believe our results should hold for integer homology, but our present arguments are insufficient. We note this question is also currently unsolved for $X(n,p)$.

Although $X(n,p)$ and $Y_k(n,p)$ seem like antipodal cases of $X\left(n, p_1, p_2, \ldots \right)$, they do not fully characterize our model. We often observe asymptotic behavior dramatically different from either one. In fact, for any fixed integer $l$ we can find some $k$ such that the range of values for $p_i$ defined by applications of Theorem \ref{middle} in dimensions $k$ through $k+l$ is nontrivial. This yields a result exemplifying the differences in this model.
\begin{cor} 
Let $X \sim X(n, p_1, p_2, \ldots)$ with $p_i = n^{-\alpha_i}$, for any integer $l$ there exists an integer $k$ and an open set of $\alpha_i$ for which $X$ w.h.p. has non-trivial cohomology in dimensions $k$ through $k+l$.
\end{cor}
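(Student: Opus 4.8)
The plan is to apply Theorem~\ref{middle} once for each of the finitely many dimensions $d \in \{k,k+1,\dots,k+l\}$, to a complex whose parameters are all equal. Fix $\alpha \in (0,1)$ and set $\alpha_i = \alpha$ for every $i$, so that $p_i = n^{-\alpha} \neq 1$. Since $H^{d}(X,\mathbb{Q})$ depends only on $p_1,\dots,p_{d+1}$, to force $H^{d}(X,\mathbb{Q}) \neq 0$ w.h.p.\ it suffices, applying Theorem~\ref{middle} with its index equal to $d+1$ and using that $p_{d+1}\neq 1$, that \eqref{midrange1} and the relaxed bound \eqref{midrange3} both hold. With all $\alpha_i = \alpha$, the identities $\sum_{i=1}^{m}\binom{m}{i} = 2^{m}-1$ and $\sum_{i=1}^{m-1}\binom{m+1}{i+1} = 2^{m+1}-m-3$ turn these two conditions into
\begin{equation*}
\alpha\left(2^{d+1}-1\right)\geq 1 \qquad\text{and}\qquad \alpha\left(2^{d+2}-d-4\right) < d+2 .
\end{equation*}
It is exactly here that the hypothesis $p_k\neq 1$ matters: with the unrelaxed bound \eqref{midrange2} in place of \eqref{midrange3}, the requirements for dimensions $d$ and $d+1$ have disjoint solution sets in $\alpha$, so equal parameters could never work in more than one dimension at once.

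Next I would find a single $\alpha$ satisfying both displayed inequalities for every $d$ between $k$ and $k+l$. Since $d \mapsto 1/(2^{d+1}-1)$ is decreasing and $d \mapsto (d+2)/(2^{d+2}-d-4)$ is decreasing for $d\geq 1$, the binding constraints are the lower bound at $d=k$ and the upper bound at $d=k+l$, so any
\begin{equation*}
\frac{1}{2^{k+1}-1} \;\leq\; \alpha \;<\; \frac{k+l+2}{2^{k+l+2}-k-l-4}
\end{equation*}
works. Clearing denominators and dividing through by $2^{k+1}$, this interval is nonempty as soon as $2^{l+1}\leq k+l+2$, which holds for all $k\geq 2^{l+1}$. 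Fix such a $k$ and choose $\alpha$ in the open interior of the interval, so that in particular $1 < \alpha(2^{k+1}-1)$. Then every constraint above becomes a \emph{strict} linear inequality in the finitely many variables $\alpha_1,\dots,\alpha_{k+l+1}$, so the set $U$ of parameter vectors satisfying all of them is open, and it is nonempty since it contains $(\alpha,\dots,\alpha)$; shrinking if necessary we may take $U\subseteq(0,1)^{k+l+1}$, so that $p_{d+1}\neq 1$ throughout $U$. For any parameters in $U$, Theorem~\ref{middle} gives $\P[\,H^{d}(X,\mathbb{Q})=0\,]\to 0$ for each of the $l+1$ dimensions $d\in\{k,\dots,k+l\}$, and a union bound over them shows that w.h.p.\ $H^{d}(X,\mathbb{Q})\neq 0$ for all of these $d$ simultaneously.

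The only real obstacle is checking that the $\alpha$-interval is nonempty: increasing $d$ drives both endpoints to $0$ at rate $2^{-d}$, and one must confirm that the linear numerator $d+2$ of the upper endpoint is enough to keep it above the lower endpoint once $k$ is large compared with $2^{l}$ — the estimate $2^{l+1}\leq k+l+2$ above makes this precise, and shows the required $k$ grows exponentially in $l$. Everything else is routine: the binomial simplifications, the monotonicity in $d$, the passage from a single point to an open set of parameters, and the union bound over finitely many dimensions.
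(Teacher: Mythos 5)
Your proof is correct and follows exactly the route the paper intends: the corollary is asserted to follow from applying Theorem \ref{middle} (in its relaxed form, using $p_{d+1}\neq 1$) in each dimension $k,\dots,k+l$ and checking the resulting parameter region is nontrivial, which is precisely what you do, with the equal-parameter point $\alpha_i=\alpha$ and the bound $k\geq 2^{l+1}$ making the nonemptiness explicit. The binomial simplifications, monotonicity of the binding constraints, openness via strict inequalities, and the final union bound are all verified correctly.
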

\end{subsection}

\end{section}

\begin{section}{Topological Preliminaries}
\begin{subsection}{Basic definitions}
Before proceeding, we lay out the definitions and theorems critical to our work. For further reference, we direct the reader to \cite{hatcher2002algebraic}.

Essentially, the homology and cohomology of a topological space is a measure of the number of ``holes" of a specific dimension in the space. Fixing some simplicial complex $X$, for any $k \geq 0$ we define $C^k(X)$ to be the vector space of linear $\mathbb{Q}$-valued functions on the $k$-simplices of $X$. We call such functions $k$-cochains and it is not hard to see that $C^k(X)$ is generated by the characteristic functions of the individual $k$-faces of $X$. For some $(k+1)$-face $\sigma = [v_0, \ldots, v_{k+1}]$ in $X$, we define the $k$-faces $\sigma_i = [v_0, \ldots, \hat{v}_i, \ldots, v_{k+1} ]$. We then the define the \emph{$k$-th coboundary map} $\delta^k : C^k(X) \rightarrow C^{k+1}(X)$ by, for some $\phi \in C^i(X)$,
\begin{equation*}
\delta^k (\phi)(\sigma) = \sum_{i=0}^{k+1} (-1)^i \phi(\sigma_i). 
\end{equation*}
One can verify that $\delta^k \circ \delta^{k-1} = 0$, so $\textnormal{Im}(\delta^{k-1}) \subseteq \textnormal{ker}(\delta^k)$. We call a $k$-cochain $\phi$ a coboundary if $\phi \in \textnormal{Im}(\delta^{k-1})$ and a cocycle if $\phi \in \textnormal{ker}(\delta^{k})$. With this we are able to define the \emph{$k$-th rational cohomology group of $X$} to be
\begin{equation*}
H^k(X, \mathbb{Q}) = \frac{\textnormal{ker}(\delta^k)}{\textnormal{Im}(\delta^{k-1})},
\end{equation*}
and the \emph{$k$-th Betti number} $\beta^k := \textnormal{dim} \left(H^k(X, \mathbb{Q})\right)$.

The homology of $X$ is defined in a similar fashion. We fix $F$ to be $\Z$ or some field, typically $\mathbb{Q}$ or a finite field. Letting $C_k(X)$ be the $F$-vector space generated by the $k$-faces of $X$, we construct our \emph{$k$-th boundary map} $\partial_k : C_k(X) \rightarrow C_{k-1}(X)$ by, for some $\sigma \in C_k(X)$,
\begin{equation*}
\partial_k (\sigma) = \sum_{i=0}^{k} (-1)^i \sigma_i.
\end{equation*}
Then we define the \emph{integer $k$-th homology group of $X$ with $F$-coefficients} by
\begin{equation*}
H_k(X, F) = \frac{\textnormal{ker}(\partial_k)}{\textnormal{Im}(\partial_{k+1})}.
\end{equation*}

Another useful definition for our work is the link of a subcomplex. Given a simplicial complex $X$ and a $k$-dimensional simplex $\sigma$ in X, we define the \emph{link of $\sigma$ in $X$}, denoted $\textrm{lk}_X(\sigma)$, to be a new simplicial complex with vertex set corresponding to the vertices of $X$ that form an $(k+1)$-face with $\sigma$. We then construct the new simplicial complex by adding the $(l-1)$-face corresponding to a set of vertices $v_1, \ldots, v_l$ precisely when the vertices $\sigma \cup \{v_1 , \ldots , v_l \}$ comprise a $(k+l)$-face in $X$.

A simplicial complex $X$ is \emph{pure $k$-dimensional} if every face of $X$ is contained in a $k$-dimensional face.

Finally, let $G$ be some graph with ordered vertices, with $D$ and $A$ the associated degree and adjacency matrices of $G$, respectively. We then construct the \emph{normalized Laplacian of $G$}, denoted $\mathcal{L}$, by
\begin{equation*}
\mathcal{L} = I - D^{-1/2} A D^{-1/2}.
\end{equation*}
For our work we look at the \emph{spectral gap of $G$} (denoted $\lambda_2 [G]$), which is the absolute value of the smallest non-zero eigenvalue of the normalized Laplacian of $G$.

\end{subsection}

\begin{subsection}{Useful Theorems}
There are several established theorems we use in our work.

The Universal Coefficients Theorem provides the link between the homology and cohomology of a simplicial complex $X$, telling us $H_{k-1}(X, \mathbb{Q}) \cong H^{k-1}(X, \mathbb{Q})$. So any statement about rational homology can be extended to cohomology, and vice versa. Moreover, a $\Z$-summand of $H_k \left(X, \Z \right)$ necessarily corresponds to a $\mathbb{Q}$-summand of $H_k(X, \mathbb{Q})$. Within our work, the language of a theorem statement primarily corresponds to whichever group we worked with in the proof. Finally, we note that the vanishing of integer homology is a much stronger statement than the vanishing of rational homology.

With the definitions established we introduce the first of the two theorems instrumental in our proof of Theorem \ref{upper}. We use a special case of Theorem 2.1 in a paper by Ballmann and \' Swi\c{a}tkowski \cite{garland}. \\ \\ 
\textbf{Cohomology Vanishing Theorem.} \cite[Theorem 2.1]{garland} Let $X$ be a pure $D$-dimensional finite simplicial complex such that for every (D-2)-dimensional face $\sigma$, the link $\textnormal{lk}_X (\sigma)$ is connected and has spectral gap
\begin{equation*}
\lambda_2 [\textnormal{lk}_X (\sigma) ] > 1- \frac{1}{D}
\end{equation*}
Then $H^{D-1} (X , \mathbb{Q}) = 0$. \\ 

We note that since $X$ is stipulated to be pure $D$-dimensional, the link of any $(D-2)$-face will be of dimension $1$. The spectral gaps of these link complexes are therefore well-defined. 

To produce the necessary estimates on these gaps we then need the help of the main result in \cite{spec}, established by Hoffman, Kahle, and Paquette. We present it here as a concise statement sufficient for our needs, noting the actual result yields more general and precise results. \\ \\
\textbf{Spectral Gap Theorem.} \cite[Theorem 1.1]{spec} Fix a $\delta > 0$ and let $G \sim G(n, p)$ with $p \geq \frac{(1+\delta) \log n}{n}$. Then G is connected and
\begin{equation*}
\lambda_2 (G) > 1 -o(1)
\end{equation*}
with probability $1 - o(n^{-\delta})$.

\end{subsection}

\end{section}

\begin{section}{Calculating free faces}
We call a $(k-1)$-face in a simplicial complex \emph{free} if it is not contained in any $k$-simplex. These subcomplexes naturally play an important role in homology, their characteristic functions generate $(k-1)$-cocycles. We let $N_{k-1}$ denote the number of free $(k-1)$-faces in $X$. Recall our complex has vertex set $[n]$, we use $j \in \binom{[n]}{k}$ to denote a set of $k$ vertices of $[n]$. Letting $C_j$ be the event that the vertices of $j$ span a free $(k-1)$-simplex, it follows that
\begin{equation*}
N_{k-1} = \sum_{j \in \binom{[n]}{k}} 1_{C_j}.
\end{equation*}

\begin{lemma}
For any $j \in \binom{[n]}{k}$,
\begin{equation}
\P \left[ C_j \right] = \left( \prod_{i=1}^{k-1} p_i^{\binom{k}{i+1}} \right) \left(1 - \prod_{i=1}^{k} p_i^{\binom{k}{i}} \right)^{n-k}. \label{eqn:1}
\end{equation}
\end{lemma}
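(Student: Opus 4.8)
The plan is to compute $\P[C_j]$ directly by decomposing the event into independent pieces according to the layered construction of $X(n,p_1,p_2,\ldots)$. Fix $j \in \binom{[n]}{k}$, so $j$ determines $k$ vertices spanning a potential $(k-1)$-simplex $\tau$.

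\medskip

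\noindent\textbf{Step 1: The event that $\tau$ is a face of $X$.} First I would compute the probability that all of $\tau$'s faces — in every dimension from $1$ up to $k-1$ — are present in $X$. By the inductive definition of the model, the $1$-skeleton on $j$ is built by $G(n,p_1)$, so each of the $\binom{k}{2}$ edges appears independently with probability $p_1$; then each of the $\binom{k}{3}$ triangles whose boundary is already present appears independently with probability $p_2$; and so on, with each of the $\binom{k}{i+1}$ faces of dimension $i$ appearing with probability $p_i$ conditional on its boundary being present. Multiplying these conditional probabilities telescopes to
\begin{equation*}
\P[\tau \in X] = \prod_{i=1}^{k-1} p_i^{\binom{k}{i+1}},
\end{equation*}
since a face of dimension $i$ (having $i+1$ vertices among the $k$ in $j$) is counted by $\binom{k}{i+1}$. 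This accounts for the first product in \eqref{eqn:1}.

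\medskip

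\noindent\textbf{Step 2: The event that $\tau$ is free, given $\tau \in X$.} Conditional on all faces of $\tau$ being present, $\tau$ is free iff no vertex $v \in [n] \setminus j$ completes $\tau$ to a $k$-simplex of $X$. For a fixed such $v$, the $k$-simplex on $j \cup \{v\}$ lies in $X$ iff (a) all the ``new'' lower-dimensional faces involving $v$ are present and (b) the top face on $j\cup\{v\}$ is then added with probability $p_k$. The faces of $j \cup \{v\}$ that contain $v$ and have dimension $i$ number $\binom{k}{i}$ (choose $i$ of the $k$ vertices of $j$ to go with $v$), for $i = 1,\ldots,k$; arguing as in Step 1, the probability that the $k$-simplex on $j \cup \{v\}$ is present is $\prod_{i=1}^{k} p_i^{\binom{k}{i}}$. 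These $n-k$ events, over the distinct choices of $v$, are mutually independent because they involve disjoint sets of faces (each such face contains the vertex $v$, which is unique to it), and they are also independent of the faces internal to $\tau$. Hence the conditional probability that $\tau$ is free is $\bigl(1 - \prod_{i=1}^{k} p_i^{\binom{k}{i}}\bigr)^{n-k}$, and multiplying by Step 1 gives \eqref{eqn:1}.

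\medskip

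\noindent\textbf{Main obstacle.} The computation is essentially bookkeeping, so the only real subtlety is justifying the independence claims carefully: one must check that the collection of faces determining ``$\tau \in X$'' is disjoint from each collection determining ``$v$ completes $\tau$,'' and that the latter collections are pairwise disjoint across different $v$. This follows because every face relevant to vertex $v$ contains $v$, but it deserves an explicit sentence, together with a remark that ``$\tau$ present'' and ``the boundary of the $k$-simplex on $j \cup \{v\}$ present'' overlap only in faces of $\tau$ itself — which is exactly what lets us condition cleanly. I would also note the minor edge case $k=1$, where the product over $i$ from $1$ to $k-1$ is empty (equal to $1$) and the formula correctly reduces to the $G(n,p_1)$ statement that a vertex is isolated with probability $(1-p_1)^{n-1}$.
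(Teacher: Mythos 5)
Your proposal is correct and follows essentially the same argument as the paper: compute the probability that $j$ spans a $(k-1)$-face by multiplying the conditional inclusion probabilities level by level, then use the disjointness of the face collections attached to distinct outside vertices to get independence of the $n-k$ "completion" events, each of probability $\prod_{i=1}^{k} p_i^{\binom{k}{i}}$. Your extra care in spelling out the independence/conditioning structure is a fine addition but not a different method.
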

\begin{proof}
The left parenthetical calculates the probability that $j$ is in our complex. For any $1 \leq i \leq k-1$ we need the $\binom{k}{i+1}$ possible $i$-faces on the vertices of $j$ to be contained in $X$. Proceeding inductively, the $(i-1)$-skeleton of each face is already contained in $X$ and each $i$-simplex is added independently with probability $p_i$. The right parenthetical calculates the probability these vertices do not form a $k$-simplex with one of the other $n-k$ vertices. For a fixed vertex $v$, this happens when every face of dimension $1, \ldots, k$ involving $v$ and vertices of $j$ is contained in our complex. This event that we wish to avoid occurs independently for each vertex with probability $ \prod_{i=1}^{k} p_i^{\binom{k}{i}}$, and our result follows.
\end{proof}

We now establish the threshold where these subcomplexes do not appear in our complex.

\begin{lemma} \label{nofree}
Let $X \sim X(n, p_1, p_2, \ldots)$ with $p_i = n^{-\alpha_i}$, if $$ \sum_1^k \binom{k}{i} \alpha_i < 1$$ then $X$ w.h.p.\ contains no free $(k-1)$-faces.
\end{lemma}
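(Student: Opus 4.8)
The plan is to use the first moment method: show that $\E[N_{k-1}] \to 0$ under the hypothesis, so that Markov's inequality gives $\P[N_{k-1} \geq 1] \leq \E[N_{k-1}] \to 0$, i.e.\ w.h.p.\ $N_{k-1} = 0$. By linearity of expectation and the preceding lemma,
\begin{equation*}
\E[N_{k-1}] = \binom{n}{k} \left( \prod_{i=1}^{k-1} p_i^{\binom{k}{i+1}} \right) \left( 1 - \prod_{i=1}^{k} p_i^{\binom{k}{i}} \right)^{n-k},
\end{equation*}
so everything reduces to estimating the three factors when $p_i = n^{-\alpha_i}$.

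First I would handle the easy factors. We have $\binom{n}{k} = \Theta(n^k)$, a fixed polynomial power of $n$ since $k$ is constant. The product $\prod_{i=1}^{k-1} p_i^{\binom{k}{i+1}}$ equals $n^{-S}$ where $S = \sum_{i=1}^{k-1}\binom{k}{i+1}\alpha_i$; combined with the binomial coefficient this contributes $\Theta(n^{k - S})$, again a fixed power of $n$. The delicate factor is the last one: writing $q = \prod_{i=1}^k p_i^{\binom{k}{i}} = n^{-T}$ with $T = \sum_{i=1}^k \binom{k}{i}\alpha_i$, I would use the standard bound $(1-q)^{n-k} \leq e^{-(n-k)q} = \exp\big(-(n-k)n^{-T}\big)$. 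The hypothesis $T < 1$ means $n^{1-T} \to \infty$, so this factor decays faster than any power of $n$ (like $\exp(-n^{\text{const}})$ for a positive constant, or at least $\exp(-\omega(\log n))$), which swamps the polynomial growth from the other two factors. Hence $\E[N_{k-1}] \to 0$.

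The one genuine subtlety — and the place I'd be most careful — is the boundary/edge behavior of the exponent $T$. If some $\alpha_i = 0$ (i.e.\ $p_i = 1$) this is harmless; the estimates above go through verbatim. The potential issue is when $T$ is close to but still strictly less than $1$: we still need $n^{1-T} \to \infty$, which holds since $T$ is a fixed constant $< 1$, so $1 - T$ is a fixed positive constant and $n^{1-T}$ grows polynomially — comfortably beating the polynomial prefactor $n^{k-S}$ once $n$ is large. I should also note that the inequality $(1-q)^{n-k}\le e^{-(n-k)q}$ requires $q\in[0,1]$, which holds as each $p_i\le 1$. I don't expect any real obstacle here; the argument is a routine first-moment computation, and the main thing to get right is simply confirming that the exponential decay dominates the fixed polynomial prefactor, which follows immediately from $T<1$.
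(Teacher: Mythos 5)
Your proposal is correct and follows essentially the same route as the paper: compute $\E[N_{k-1}]$ via linearity of expectation, bound $(1-q)^{n-k}\le e^{-(n-k)q}$ with $q=n^{-T}$, note that $T<1$ makes this factor $e^{-n^{\epsilon}}$ which dominates the polynomial prefactor, and conclude with Markov's inequality. No substantive differences.
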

\begin{proof}
Recall $N_{k-1}$ counts the free faces in $X$, by \eqref{eqn:1} and linearity of expectation we have
\begin{align*}
\mathbb{E} [ N_{k-1} ] &= \sum_{j \in \binom{[n]}{k}} \E[1_{C_j}] = \sum_{j \in \binom{[n]}{k}} \P[C_j] \\
&= \sum_{j \in \binom{[n]}{k}} \left( \prod_{i=1}^{k-1} p_i^{\binom{k}{i+1}} \right) \left(1 - \prod_{i=1}^{k} p_i^{\binom{k}{i}} \right)^{n-k}\\
&= \binom{n}{k} \left( \prod_{i=1}^{k-1} p_i^{\binom{k}{i+1}} \right) \left(1 - \prod_{i=1}^{k} p_i^{\binom{k}{i}} \right)^{n-k} \\
&\leq \frac{n^k}{k!} \left(\prod_{i=1}^{k-1} n^{-\alpha_i \binom{k}{i+1}} \right) \left( e^{-(n-k) \left( \prod_{i=1}^{k} n^{- \alpha_i \binom{k}{i}} \right)} \right).
\end{align*}
Then for some $D>0$,
\begin{align*}
\E \left[ N_{k-1}\right]&\leq D \frac{n^k}{k!} \left( n^{-\sum_{i=1}^{k-1} \alpha_i \binom{k}{i+1}} \right) \left( e^{-n \left( n^{-\sum_{i=1}^{k} \alpha_i \binom{k}{i}} \right)}  \right)  \\
&= \frac{D}{k!} \left( n^{k - \sum_1^{k-1} \alpha_i \binom{k}{i+1} } \right) \left( e^{-n^{1 - \sum_1^k \alpha_i \binom{k}{i} }} \right).
\end{align*}

By hypothesis $$\sum_1^{k-1} \alpha_i \binom{k}{i} < 1,$$ so the right parenthetical of our last term is $e^{-n^\epsilon}$ for some $\epsilon > 0$. This term asymptotically dominates the rest of the expression and $\E\left[N_{k-1} \right] \rightarrow 0$ exponentially. Markov's inequality tells us $$\P \left[ N_{k-1} \geq 1 \right] \leq \E \left[ N_{k-1} \right] = o(1),$$ completing our proof.
\end{proof}

So in this regime w.h.p.\ every $(k-1)$-face of our complex is contained in a $k$-simplex, a fact necessary to utilize \cite[Theorem 2.1]{garland} and prove that $H^{k-1} (X, \mathbb{Q}) = 0$ in this range.

\end{section}

\begin{section}{Trivial Cohomology}
In this section we prove Theorem \ref{upper}, the upper threshold for vanishing cohomology, with \cite[Theorem 2.1]{garland} and \cite[Theorem 1.1]{spec} crucial to our argument.

To understand the $(k-1)$-th cohomology of a complex we need only consider its $k$-skeleton, ie.\ the subcomplex of $X$ induced by its faces of dimension $k$ and lower. We use $X_k$ to denote the $k$-skeleton of $X$, observing $H^{k-1}(X_k) = H^{k-1}(X)$. The following lemma provides the first step to invoking the \cite[Theorem 2.1]{garland}.
\begin{lemma}
Let $X \sim X(n, p_1, p_2, \ldots)$ such that $$ \sum_1^k \alpha_i \binom{k}{i} < 1$$ and $X_k$ be its $k$-skeleton. Then $X_k$ is w.h.p.\ pure $k$-dimensional.
\end{lemma}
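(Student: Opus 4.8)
The plan is to show that with high probability every face of $X_k$ of dimension $j < k$ is contained in some $k$-face, which is precisely the definition of pure $k$-dimensionality. Since the $k$-skeleton discards all faces of dimension above $k$, it suffices to check this for $j$-faces with $0 \le j \le k-1$. The key observation is that this condition fails exactly when $X_k$ contains a ``free'' $j$-face in the sense of Section 3 — a $j$-simplex present in $X$ but lying in no $k$-simplex — so the strategy is to bound, for each $j$, the expected number of such faces and apply a union bound and Markov's inequality.

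First I would note that the case $j = k-1$ is already handled by Lemma \ref{nofree}: under the hypothesis $\sum_1^k \alpha_i \binom{k}{i} < 1$ we have w.h.p.\ $N_{k-1} = 0$, so every $(k-1)$-face lies in a $k$-simplex. Next I would treat a general $j \in \{0, \ldots, k-2\}$ by the same first-moment method. Fix a set $S$ of $j+1$ vertices; the probability that $S$ spans a face of $X$ is $\prod_{i=1}^{j} p_i^{\binom{j+1}{i+1}}$, a constant-order quantity in $n$ times a negative power of $n$, certainly at most $1$. The probability that this face is not extended to a $k$-simplex by any single choice of $k - j - 1$ additional vertices is bounded above by the probability it is in no such $k$-simplex, and a direct computation (mirroring the proof of the Lemma before \ref{nofree}) shows the expected number of free $j$-faces is at most a constant times
\begin{equation*}
n^{j+1} \exp\!\left( -c\, n^{k-j-1} \cdot n^{-\sum_{i=1}^{k} \alpha_i \binom{k}{i}} \right)
\end{equation*}
for an appropriate positive constant $c$ depending on which edges must be present (the exponent of $n$ inside the exponential is $k - j - 1 - \sum_{i=1}^k \alpha_i \binom{k}{i}$, which is positive for $j \le k-2$ precisely because $\sum \alpha_i \binom{k}{i} < 1$). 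Hence the doubly-exponential decay kills the polynomial factor $n^{j+1}$ and the expectation tends to $0$; Markov's inequality gives that w.h.p.\ no free $j$-face exists.

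Finally I would combine these estimates over the finitely many values $j = 0, 1, \ldots, k-1$ by a union bound: w.h.p.\ $X_k$ has no free $j$-face in any dimension $j \le k-1$, which is exactly the statement that $X_k$ is pure $k$-dimensional. The main obstacle, such as it is, lies in correctly writing down the exponent of the extending-vertex probability for a general $j$-face — one has to count which lower-dimensional faces among the $k - j - 1$ new vertices and the original $j+1$ must be present in $X$ — and verifying that the resulting exponent $k - j - 1 - \sum_{i=1}^k \alpha_i \binom{k}{i}$ is strictly positive for all $j \le k - 2$. This is immediate from the hypothesis since $k - j - 1 \ge 1$ there, but it is worth isolating the worst case $j = k-2$, where the margin is smallest, to make clear the argument does not degrade as $j$ ranges over its values.
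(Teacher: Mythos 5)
Your reduction of purity to ``no free $j$-faces in any dimension $j\le k-1$'' is the right target, and your treatment of $j=k-1$ via Lemma \ref{nofree} is fine, but the heart of your argument for $j\le k-2$ has a genuine gap. You bound the probability that a fixed $j$-face lies in no $k$-simplex by an expression of the form $\exp\bigl(-c\,N q\bigr)$, where $N$ is the number of candidate extension sets and $q$ the per-set completion probability. That bound is only legitimate when the completion events for distinct extension sets are independent, which is exactly the special feature of the case $j=k-1$: there each extension adds a single vertex, and distinct vertices involve disjoint collections of candidate faces, which is why the factor $\bigl(1-\prod_i p_i^{\binom{k}{i}}\bigr)^{n-k}$ appears in \eqref{eqn:1}. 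For $j\le k-2$ an extension requires $k-j\ge 2$ new vertices, different extension sets overlap, and the corresponding ``this set fails to complete'' events are decreasing and hence positively correlated; by FKG the product $(1-q)^N$ is a \emph{lower} bound on the avoidance probability, not an upper bound. So the claimed doubly-exponential decay does not follow from the first-moment computation you invoke; you would need Janson/Suen-type correlation control (or a second-moment argument strong enough to survive the union bound over all $j$-faces) to push the direct route through. There are also bookkeeping slips that would need fixing even then: a $j$-face needs $k-j$ (not $k-j-1$) additional vertices, and the per-set probability exponent is $\sum_i \alpha_i\bigl(\binom{k+1}{i+1}-\binom{j+1}{i+1}\bigr)$ rather than $\sum_i\alpha_i\binom{k}{i}$.

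The paper sidesteps all of this with a one-line chaining argument: since $\alpha_i\ge 0$ and $\binom{j+1}{i}\le\binom{k}{i}$, the hypothesis gives $\sum_1^{j+1}\alpha_i\binom{j+1}{i}\le\sum_1^{k}\alpha_i\binom{k}{i}<1$ for every $j<k$, so Lemma \ref{nofree} applied in dimension $j+1$ says every $j$-face lies in some $(j+1)$-face; iterating up the dimensions, every face lies in a $k$-face. Each application only ever uses one-vertex extensions, where the independence you need really does hold. I'd recommend recasting your proof in that form rather than trying to repair the direct multi-vertex extension estimate.
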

\begin{proof}
Fixing a $1 \leq j< k-1$ we have $$\sum_1^{j+1} \alpha_i \binom{j+1}{i} \leq \sum_1^k \alpha_i \binom{k}{i} < 1,$$ so by Lemma \ref{nofree} w.h.p.\ every $j$-face of $X_k$ is contained in a $(j+1)$-simplex. Our claim follows immediately. 
\end{proof}
Thus $X_k$ satisfies the first hypothesis of \cite[Theorem 2.1]{garland}. To establish trivial cohomology we must bound the spectral gaps of the links of $X_k$.

\begin{subsection}{Using the Spectral Gap Theorem}

We wish to understand the structure of the links of the $(k-2)$-faces in our complex. Given a $(k-2)$-face $\sigma \in X_k$, we let $L_\sigma$ denote the number of vertices in $\textrm{lk}_{X_k}(\sigma)$.
\begin{lemma} \label{linklemma}
For any $(k-2)$-face $\sigma \in X$, $L_\sigma$ has the same distribution as $\textnormal{Bin} ( n- k+1, p  )$ with $ \bar{p} = \prod_1^{k-1} p_i^{\binom{k-1}{i}}$. Furthermore, $\textnormal{lk}_{X_k}(\sigma)$ has the same distribution as $G(L_\sigma, p')$ with $p' = \prod_1^k p_i^{\binom{k-1}{i-1}}$.
\end{lemma}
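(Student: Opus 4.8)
The plan is to unpack the inductive construction of $X(n,p_1,p_2,\ldots)$ and track exactly which faces must be present for a vertex $v \notin \sigma$ to lie in $\mathrm{lk}_{X_k}(\sigma)$, and for a pair of such vertices $v, w$ to span an edge in that link. Fix a $(k-2)$-face $\sigma$ on vertex set $S$ with $|S| = k-1$, conditioning on the event that $\sigma \in X_k$. For a vertex $v \in [n] \setminus S$, the vertex $v$ is in $\mathrm{lk}_{X_k}(\sigma)$ precisely when $S \cup \{v\}$ spans a $(k-1)$-face of $X$, which by the inductive rule requires every $i$-face on $S \cup \{v\}$ that contains $v$, for $i = 1, \ldots, k-1$, to have been added. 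There are $\binom{k-1}{i}$ such $i$-faces (choose the other $i$ vertices from $S$), and, crucially, all of these faces are disjoint from the faces used to build $\sigma$ itself and are disjoint across distinct choices of $v$, so the relevant additions are mutually independent. Each such $i$-face is added with probability $p_i$, so the probability that $v$ joins the link is $\bar p := \prod_{i=1}^{k-1} p_i^{\binom{k-1}{i}}$, independently over the $n - (k-1) = n-k+1$ candidate vertices. Hence $L_\sigma \sim \mathrm{Bin}(n-k+1, \bar p)$.

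Next, conditioning further on the set of $L_\sigma$ vertices forming the link, I would compute the probability that two such vertices $v, w$ span an edge of $\mathrm{lk}_{X_k}(\sigma)$, i.e.\ that $S \cup \{v, w\}$ is a $k$-face of $X$. By the construction, this $k$-face appears iff every $j$-face on $S \cup \{v,w\}$ that contains the edge $\{v,w\}$, for $j = 1, \ldots, k$, has been added; such a $j$-face is determined by choosing $j-1$ further vertices from $S$, giving $\binom{k-1}{j-1}$ faces of dimension $j$. The faces already known to be present from membership of $v$ and $w$ in the link are exactly those contained in $S \cup \{v\}$ or $S \cup \{w\}$ individually — none of which contains the edge $\{v,w\}$ — so the $j$-faces through $\{v,w\}$ are a fresh, independent batch of additions, each occurring with probability $p_j$. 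This gives edge probability $p' := \prod_{j=1}^{k} p_j^{\binom{k-1}{j-1}}$. Moreover, for two distinct pairs $\{v,w\}$ and $\{v',w'\}$ of link vertices, the corresponding batches of $j$-faces are disjoint (any face in both batches would contain both pairs, forcing a face on $\ge k+1$ vertices, which has dimension $\ge k$ and hence is irrelevant only at $j=k$ — here I would note that the single top face $S\cup\{v,w,v',w'\}$, if $|\{v,w,v',w'\}|=4$, is of dimension $\ge k+1$ and does not enter, and if the pairs share a vertex the $k$-faces through the two edges are still distinct). So the edge events are independent, and conditionally on $L_\sigma$ the link is distributed as $G(L_\sigma, p')$.

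The main obstacle is the bookkeeping of independence: one has to be careful that (a) the faces forcing $v \in \mathrm{lk}(\sigma)$ are disjoint from those already fixed by $\sigma \in X_k$ and from those for any other vertex $v'$, and (b) the faces forcing the edge $\{v,w\}$ are disjoint from all the vertex-membership faces and from the edge faces of any other pair. Both follow from the simple combinatorial observation that a face of $S \cup T$ (with $T$ a set of $\le 2$ new vertices) is "new" relative to the link data exactly when it contains all of $T$, together with the fact that the construction adds each simplex independently given its boundary. I would present this as the crux of the argument; the probability computations themselves are then immediate from $\binom{k-1}{i}$ (resp.\ $\binom{k-1}{i-1}$) counting the relevant faces of each dimension, and the displayed formulas for $\bar p$ and $p'$ drop out.
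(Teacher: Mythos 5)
Your proposal is correct and follows essentially the same route as the paper's proof: count the $\binom{k-1}{i}$ faces of each dimension needed for a vertex to join the link (giving $\bar p$ over the $n-k+1$ candidates) and the $\binom{k-1}{j-1}$ faces through a pair needed for an edge (giving $p'$), with independence coming from the disjointness of the relevant face sets. Your extra bookkeeping on disjointness of the face batches is just a more explicit version of the paper's one-line independence claim.
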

\begin{proof}
Fixing a $(k-2)$-face $\sigma$, a vertex $v$ will be in $lk_{X_k} (\sigma)$ if $X_k$ contains every possible simplex on $v$ and some subset of the vertices of $\sigma$. In dimension $1 \leq i \leq k-1$ there are $\binom{k-1}{i}$ such simplices, each present with probability $p_i$. Distinct vertices appearing in the link are statements about disjoint sets of simplices, so these events are independent with probability $\bar{p}$ and our statement about $L_\sigma$ follows. Similarly, the edge between two vertices of $\textnormal{lk}_{X_k}(\sigma)$ is included when $X_k$ contains every simplex of dimension $1, \ldots, k$ involving those two vertices and vertices of $\sigma$. This occurs with probability $p' = \prod_1^k p_i^{\binom{k-1}{i-1}}$, and the inclusion of distinct edges are again independent events. Thus $\textnormal{lk}_{X_k}(\sigma)$ has the same distribution as $G(L_\sigma, p')$ as desired.
\end{proof}
So the link of a $(k-2)$-face behaves like an Erd\H{o}s-R\'enyi random graph, but before applying our theorem to bound its spectral gap we must bound $L_\sigma$.

\begin{lemma} \label{linkverts}
Let $X \sim X(n, p_1, p_2 \ldots)$ with $$\sum_1^k \alpha_i \binom{k}{i} < 1,$$ then w.h.p.\ $n\bar{p}/2 \leq L_\sigma$ for every $(k-2)$-face $\sigma \in X$.
\end{lemma}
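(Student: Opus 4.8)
The plan is to combine a Chernoff lower-tail estimate for the binomial variable $L_\sigma$ with a union bound over all $(k-2)$-faces. First I would observe that although $\textnormal{lk}_{X_k}(\sigma)$ is only defined when $\sigma \in X$, the quantity $L_\sigma$ itself — the number of vertices $v \in [n]\setminus\sigma$ such that every simplex of dimension $1,\dots,k-1$ spanned by $v$ together with a subset of $\sigma$ lies in $X_k$ — is a well-defined random variable for \emph{every} $(k-2)$-subset $\sigma$ of $[n]$, and by the proof of Lemma \ref{linklemma} it has law $\textnormal{Bin}(n-k+1,\bar p)$ with $\bar p = \prod_1^{k-1} p_i^{\binom{k-1}{i}}$. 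Hence it is enough to prove that w.h.p.\ $L_\sigma \geq n\bar p/2$ simultaneously over all $\binom{n}{k-1}$ such subsets; this in particular covers the faces that actually lie in $X$.

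Next I would pin down the exponents. Since $p_i = n^{-\alpha_i}$ we have $\bar p = n^{-\gamma}$ with $\gamma := \sum_1^{k-1}\alpha_i\binom{k-1}{i}$, and because $\binom{k-1}{i}\leq\binom{k}{i}$ and the $\alpha_i$ are nonnegative, $\gamma \leq \sum_1^{k-1}\alpha_i\binom{k}{i} \leq \sum_1^{k}\alpha_i\binom{k}{i} < 1$ by hypothesis. Therefore $n\bar p = n^{1-\gamma} = n^{\epsilon}$ for a fixed $\epsilon > 0$, so $\E[L_\sigma] = (n-k+1)\bar p$ grows polynomially in $n$. Then I would apply the standard Chernoff bound: for $n$ large, $n\bar p/2 \leq \tfrac23(n-k+1)\bar p = \tfrac23\,\E[L_\sigma]$, so $\P[L_\sigma < n\bar p/2] \leq \P\bigl[L_\sigma < \tfrac23\E L_\sigma\bigr] \leq e^{-\E[L_\sigma]/18} \leq e^{-c n^{\epsilon}}$ for some fixed $c>0$. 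Union bounding,
\begin{equation*}
\P\bigl[\exists\,\sigma : L_\sigma < n\bar p/2\bigr] \;\leq\; \binom{n}{k-1} e^{-c n^{\epsilon}} \;\leq\; n^{k-1} e^{-c n^{\epsilon}} \;\longrightarrow\; 0,
\end{equation*}
since a stretched exponential dominates any fixed power of $n$. Thus w.h.p.\ $L_\sigma \geq n\bar p/2$ for every $(k-2)$-subset of $[n]$, and a fortiori for every $(k-2)$-face of $X$, which is the claim.

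The only step requiring genuine care is the exponent bookkeeping: one must verify that the hypothesis $\sum_1^k\alpha_i\binom{k}{i}<1$ forces $\gamma<1$, hence $n\bar p$ to be a strictly positive power of $n$, so that the $e^{-\Theta(n\bar p)}$ tail bound overwhelms the $n^{k-1}$ factor from the union bound. Everything past that point is a routine concentration estimate.
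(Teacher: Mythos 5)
Your proposal is correct and follows essentially the same route as the paper: a Chernoff lower-tail bound for $L_\sigma \sim \textnormal{Bin}(n-k+1,\bar p)$, the observation that the hypothesis forces $n\bar p = n^{\epsilon}$ for some $\epsilon>0$, and a union bound over the at most $\binom{n}{k-1}$ possible $(k-2)$-faces (the paper phrases the union bound via Markov's inequality applied to a sum of indicators, which is the same estimate). Your remark that $L_\sigma$ is well-defined for every $(k-2)$-subset of $[n]$, not just for faces of $X$, is a small clarification of a point the paper passes over silently, but it does not change the argument.
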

\begin{proof}
For any specific $(k-2)$-face $\sigma$ and $n$ large enough that $$n\bar{p}/2 <4 (n-k+1) \bar{p}/7,$$ Chernoff bounds give us
\begin{equation} \label{chernoff}
\P \left(L_\sigma < n \bar{p}/2 \right) \leq \P \left(L_\sigma < 4\mu/7 \right) \leq e^{-\frac{9\mu}{98}}
\end{equation}
with $\mu = (n-k+1)\bar{p}$. However, these probabilities are not independent for each $(k-2)$-face. Defining $J_\sigma$ to be the indicator random variable for $\{L_\sigma < n\bar{p}/2\}$, Markov's Inequality tells us
\begin{equation*}
\P \left[ \left( \sum_\sigma J_\sigma \right) \geq 1 \right] \leq \E \left[ \sum_\sigma J_\sigma \right] = \sum_{\sigma} \E \left[J_\sigma \right] .
\end{equation*}
There are at most $\binom{n}{k-1}$ $(k-2)$-faces in $X$ and by construction $\E[J_\sigma] = \P( L_\sigma < n\bar{p}/2)$, so
\begin{align*}
\P \left[ \left( \sum_\sigma J_\sigma \right) \geq 1 \right] &\leq \binom{n}{k-1}  \P( L_\sigma < n\bar{p}/2) \textrm{ (for some fixed $\sigma$)} \\
&\leq \binom{n}{k-1} e^{-\frac{9\mu}{98}} \textnormal{    (by \eqref{chernoff})}\\
&= \binom{n}{k-1} e^{-\frac{9(n-k+1)\bar{p}}{98}} \\
&= \binom{n}{k-1} e^{-\frac{9n\bar{p}}{98}} e^{\frac{(k-1)\bar{p}}{98}}.
\end{align*}
Since $\alpha_i \geq 0$ for all $i$, we know $$\sum_1^{k-1} \alpha_i \binom{k-1}{i} < \sum_1^{k} \alpha_i \binom{k}{i} < 1$$ and so for some $\ep >0$
\begin{equation*}
\bar{p} = \prod_1^{k-1} p_i^{\binom{k-1}{i}} = n^{-\sum_1^{k-1} \alpha_i \binom{k-1}{i}} = n^{\epsilon-1} .
\end{equation*}
Since $\frac{(k-1)\bar{p}}{98} \rightarrow 0$, we may bound $e^{\frac{(k-1)\bar{p}}{98}}$  above by a constant $C>0$. It follows that $$\P \left[ \left( \sum_\sigma J_\sigma \right) \geq 1\right] \leq C \binom{n}{k-1} e^{-\frac{9}{98} n^\epsilon } = o(1).$$
Thus w.h.p.\ $L_\sigma = 0$ for every $(k-2)$-face $\sigma$, completing our proof.
\end{proof}

We require one last lemma before proving our main result.

\begin{lemma} \label{forspec}
Let $X \sim X(n, p_1, p_2, \ldots)$ and fix $\delta>0$. If $$\sum_1^{k} \alpha_i \binom{k}{i} < 1$$ then w.h.p.
\begin{equation}
\frac{(1+\delta) \log L_\sigma}{L_\sigma} \leq p'
\end{equation}
for all $(k-2)$-faces $\sigma$ in $X$.
\end{lemma}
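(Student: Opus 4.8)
The plan is to obtain this as a short consequence of Lemma~\ref{linkverts} together with the monotonicity of the function $x \mapsto (\log x)/x$. By Lemma~\ref{linkverts}, with high probability $L_\sigma \ge n\bar p/2$ for every $(k-2)$-face $\sigma$ in $X$; and as computed in its proof $\bar p = n^{\epsilon-1}$ for some $\epsilon>0$, so $n\bar p/2 = \tfrac12 n^{\epsilon} \to \infty$. In particular, for $n$ large we have $n\bar p/2 > e$, so on this event every $L_\sigma$ lies in the range where $x\mapsto (\log x)/x$ is decreasing (and is nonzero, so the logarithm is defined). Consequently no upper bound on $L_\sigma$ is needed: for all $(k-2)$-faces $\sigma$ simultaneously,
\[
\frac{(1+\delta)\log L_\sigma}{L_\sigma} \;\le\; \frac{(1+\delta)\log(n\bar p/2)}{n\bar p/2},
\]
and it suffices to prove that this common upper bound is at most $p'$ for all large $n$ --- a deterministic inequality.

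To verify that inequality I would unwind the exponents. Set $a = \sum_{i=1}^{k-1}\alpha_i\binom{k-1}{i}$ and $b = \sum_{i=1}^{k}\alpha_i\binom{k-1}{i-1}$, so that $n\bar p/2 = \tfrac12 n^{1-a}$ and $p' = n^{-b}$. The target inequality rearranges to
\[
2(1+\delta)\bigl((1-a)\log n - \log 2\bigr) \;\le\; n^{\,1-a-b}.
\]
The left-hand side is $O(\log n)$, so this holds for all sufficiently large $n$ provided $1-a-b>0$. By Pascal's identity $\binom{k-1}{i}+\binom{k-1}{i-1} = \binom{k}{i}$ for $1\le i\le k-1$, together with the $i=k$ contribution $\binom{k-1}{k-1}=\binom{k}{k}$, one finds $a+b = \sum_{i=1}^{k}\alpha_i\binom{k}{i}$, which is strictly less than $1$ by hypothesis. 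Hence $n^{1-a-b}$ is a positive power of $n$ and eventually dominates the logarithmic left-hand side, finishing the proof.

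This argument is almost entirely bookkeeping, and I do not expect a genuine obstacle. The one step deserving care is the observation that the exponent sums coming from $\bar p$ and from $p'$ recombine, via Pascal's rule, into exactly the quantity $\sum_{i=1}^{k}\alpha_i\binom{k}{i}$ that the hypothesis controls --- this is what produces the polynomial-versus-logarithmic gap needed for the comparison. One should also keep track of the fact that everything is conditioned on the high-probability event of Lemma~\ref{linkverts}; off that event (probability $o(1)$) there is nothing to prove, which is why the statement is only ``w.h.p.''
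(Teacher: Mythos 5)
Your proposal is correct and follows essentially the same route as the paper: both invoke Lemma~\ref{linkverts} for the lower bound $L_\sigma \ge n\bar p/2$, use the monotonicity of $x \mapsto (1+\delta)(\log x)/x$ on $[e,\infty)$, and then reduce to the deterministic comparison in which $\bar p\, p'$ recombines via Pascal's identity into $n^{-\sum_1^k \alpha_i\binom{k}{i}}$, so that the logarithm is beaten by the positive power $n^{1-\sum_1^k\alpha_i\binom{k}{i}}$. The only cosmetic difference is that the paper bounds the ratio $f(n\bar p/2)/p' \le (2+2\delta)\log n / n^{\epsilon} = o(1)$ directly rather than rearranging into your explicit inequality.
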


\begin{proof}
We let $L = n \bar{p} / 2$. Straightforward calculus shows $f(x) = \frac{(1+\delta) \log(x)}{x}$ is monotonically decreasing on $[e, \infty)$. For large $n$ we have $e < L$, so if $f(L) < p'$ then by Lemma \ref{linkverts} $f(L_\sigma) < p'$ for all $\sigma$ w.h.p. We let $\ep = 1 - \sum_1^k \alpha_i \binom{k}{i}$, noting $\ep >0$ by hypothesis. Then
\begin{align*}
\frac{f(L)}{p'} &= (1+ \delta) \frac{\log L}{Lp'} \\
&\leq (2+ 2\delta) \frac{\log n}{n \bar{p} p'} \\
&= (2+ 2\delta) \frac{\log n}{n^{1 - \sum_1^{k-1} \alpha_i \binom{k-1}{i} - \sum_1^{k} \alpha_i \binom{k-1}{i-1} } } \\
&= (2+ 2\delta) \frac{\log n}{n^{1 - \sum_1^{k} \alpha_i \binom{k}{i}} } \\
&= (2+ 2\delta) \frac{\log n}{n^{\epsilon}} \\
&= o(1).
\end{align*}
Thus w.h.p.\ $f(L_\sigma) < f(L) < p'$ for all $(k-2)$-faces $\sigma$. 
\end{proof}

\end{subsection}

\begin{subsection}{The Main Result}
We now have the machinery to prove a main theorem.

\begin{proof}[Proof of Theorem \ref{upper}]
We begin by fixing the $\delta > 0$ we will use in \cite[Theorem 1.1]{spec}: 
\begin{equation} \label{delta}
\delta = \frac{k - \sum_1^{k-2} \binom{k-1}{i+1} \alpha_i}{1-\sum_1^{k-1} \binom{k-1}{i} \alpha_i}.
\end{equation} 

A standard second moment technique, detailed in Section 6, tells us that if $f_{k-2}$ denotes the number of $(k-2)$-faces in $X$, or $X_k$, then w.h.p.
\begin{equation} \label{linkbound}
f_{k-2} \leq (1+o(1)) \E \left[ f_{k-2} \right] = (1 + o(1)) \binom{n}{k-1} \prod_1^{k-2} p_i ^{\binom{k-1}{i+1}} .
\end{equation}
By Lemma \ref{linklemma} each of these faces has link with distribution $G ( L_\sigma, p' )$, and by Lemma \ref{forspec} w.h.p.
\begin{equation*}
\frac{(1+\delta) \log L_\sigma}{L_\sigma} < p'
\end{equation*}
for all $(k-2)$-faces $\sigma$ of $X$. Thus by \cite[Theorem 1.1]{spec} the probability $P_\sigma$ that $$\lambda_2 [ \textrm{lk}_{X_k} (\sigma) ] < 1 - 1/k$$ is $o(L_\sigma^{-\delta})$. Letting $P_X$ denote the probability there exists any $(k-2)$-face whose link in $X_k$ has spectral gap less than $1-1/k$, we apply a union bound to see
\begin{align*}
P_X &\leq \sum_\sigma P_\sigma \\
&= \sum_\sigma o(L_\sigma^{-\delta}) \\
&\leq \sum_\sigma o((\frac{n \bar{p}}{2})^{-\delta}).
\end{align*}
The last line holds since w.h.p.\ $ n\bar{p}/2 <L_\sigma $, so $L_\sigma^{-\delta} < (n \bar{p}/2)^{-\delta}$. By \eqref{linkbound},
\begin{align*}
P_X &\leq \left(1 + o(1) \right) \binom{n}{k-1} \left(\prod_1^{k-2} p_i ^{\binom{k-1}{i+1}} \right) o \left(2^{\delta} (n\bar{p})^{-\delta} \right) \\
&\leq \left(1+ o(1) \right) \frac{n^{k-1}}{(k-1)!} \left(\prod_1^{k-2} p_i ^{\binom{k-1}{i+1}} \right) o \left(2^\delta (n\bar{p})^{-\delta} \right) \\
&= O \left( 2^\delta n^{k-1} n^{- \sum_1^{k-2} \alpha_i \binom{k-1}{i+1} } \left(n \cdot n^{-\sum_1^{k-1} \alpha_i \binom{k-1}{i} } \right)^{-\delta}         \right) \\
&= O \left( 2^\delta  n^{k-1- \sum_1^{k-2} \alpha_i \binom{k-1}{i+1}  } n^{-\delta \left(1 - \sum_1^{k-1} \alpha_i \binom{k-1}{i}  \right)}       \right).
\end{align*}

By our choice of $\delta$ in \eqref{delta},
\begin{align*}
P_X &=  O \left(  n^{k-1- \sum_1^{k-2} \alpha_i \binom{k-1}{i+1} } n^{- (   k- \sum_1^{k-2} \alpha_i \binom{k-1}{i+1}  )}      \right) \\
&= O\left(n^{-1} \right) \\
&= o(1).
\end{align*}

Thus w.h.p.\ $$\lambda_2 [\textnormal{lk}_{X_k}(\sigma)] > 1 - \frac{1}{k}$$ for every $(k-2)$-face $\sigma$ of X. Combining this with Lemma \ref{nofree}, we may apply \cite[Theorem 2.1]{garland} on $X_k$ to conclude that w.h.p.\ $H^{k-1}(X_k, \mathbb{Q}) = 0$. Noting that $H^{k-1}(X_k , \mathbb{Q}) \cong H^{k-1}(X, \mathbb{Q} )$ completes our proof.
\end{proof}

\end{subsection}

\end{section}

\begin{section}{Nontrivial Homology: Boundaries of Simplices}
In this Section we consider the case $$1 \leq \sum_1^k \alpha_i \binom{k}{i},\ \sum_1^{k-1} \alpha_i \binom{k+1}{i+1} < k+1, \textnormal{ and } p_k \neq 1$$ to prove the second half of Theorem \ref{middle}. 

As shown for $Y_k(n,p)$ in \cite{tophomcollapse}, the first type of homological $(k-1)$-cycle to occur in $X\left(n, p_1, p_2, \ldots \right)$ is the boundary of a $k$-dimensional simplex that is not filled in, provided such a subcomplex is possible (ie.\ $p_k \neq 1$). If $X$ contains the unfilled boundary of a $k$-face with at least one free $(k-1)$-face, then it generates a $\Z$-summand in $H_{k-1}(X,\Z)$. For a set of $k+1$ vertices $j \in \binom{[n]}{k+1}$, we define $A_j$ as the event $j$ corresponds to the unfilled boundary of a simplex with first $(k-1)$-face, determined by lexicographic order, not contained in any $k$-simplex. Letting $M_{k-1}$ denote the total number of such subcomplexes in $X$, it follows that
\begin{equation*}
M_{k-1} = \sum_{j \in \binom{[n]}{k+1}} 1_{A_j}.
\end{equation*}

We then calculate the probability of $A_j$.
\begin{lemma} For any $j\in \binom{[n]}{k+1}$,
\begin{equation}
\E [1_{A_j}] = \P[A_j]= \left( \prod_{i=1}^{k-1} p_i^{\binom{k+1}{i+1}} \right)(1-p_k) \left(1 - \prod_{i=1}^{k} p_i^{\binom{k}{i}} \right)^{n-k-1}. \label{eqn:expy}
\end{equation}
\end{lemma}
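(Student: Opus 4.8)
The plan is to mimic the computation of $\P[C_j]$ that produced \eqref{eqn:1}, decomposing the event $A_j$ into three pieces governed by pairwise disjoint collections of potential simplices, so that these pieces are mutually independent and their probabilities multiply.

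Write $j = \{v_0, v_1, \dots, v_k\}$, let $\tau$ denote the first $(k-1)$-face of $j$ in lexicographic order, and let $w \in j$ be the vertex missed by $\tau$. First I would observe that $A_j$ is the intersection of: (i) every face of dimension $1, \dots, k-1$ spanned by a subset of $j$ lies in $X$; (ii) the $k$-simplex on $j$ is \emph{not} in $X$; and (iii) for every vertex $u \in [n] \setminus j$, the $k$-simplex spanned by $\tau \cup \{u\}$ is not in $X$. Indeed, (i) together with (ii) says precisely that $j$ spans the unfilled boundary of a $k$-simplex, and given (i)--(ii) the face $\tau$ fails to be free exactly when some $k$-simplex $\tau \cup \{u\}$ lies in $X$; the only such candidate with $u \in j$ is $\tau \cup \{w\}$, which is the $k$-simplex on $j$ already excluded by (ii), so the remaining constraint is (iii) ranging over the $n - k - 1$ vertices $u \notin j$. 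That the exponent comes out $n-k-1$ rather than $n-k$ is the one point needing care.

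Next I would compute the three probabilities. For (i), the inductive construction adds each $i$-face independently with probability $p_i$ once its boundary is present, and the $k$-simplex on $j$ has $\binom{k+1}{i+1}$ faces of dimension $i$; telescoping over $i = 1, \dots, k-1$ gives $\P[\text{(i)}] = \prod_{i=1}^{k-1} p_i^{\binom{k+1}{i+1}}$. Event (ii) concerns a single simplex not involved in (i), so it is independent of (i) and has probability $1-p_k$. For (iii), fix $u \notin j$: the $k$-simplex $\tau \cup \{u\}$ lies in $X$ iff every face of dimension $1, \dots, k$ containing $u$ and some subset of the $k$ vertices of $\tau$ lies in $X$, and there are $\binom{k}{i}$ such faces of dimension $i$, so this happens with probability $\prod_{i=1}^{k} p_i^{\binom{k}{i}}$ and its complement with probability $1 - \prod_{i=1}^{k} p_i^{\binom{k}{i}}$. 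The collections of simplices attached to distinct $u$ are disjoint (each contains its own $u$), and all are disjoint from the simplices in (i)--(ii) (which lie inside $j$), so these $n-k-1$ events are mutually independent and independent of (i)--(ii). Multiplying yields
\begin{equation*}
\P[A_j] = \left( \prod_{i=1}^{k-1} p_i^{\binom{k+1}{i+1}} \right)(1-p_k)\left(1 - \prod_{i=1}^{k} p_i^{\binom{k}{i}} \right)^{n-k-1},
\end{equation*}
and $\E[1_{A_j}] = \P[A_j]$ since $1_{A_j}$ is an indicator.

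The only genuine obstacle is the bookkeeping: identifying exactly which potential simplices each sub-event depends on in order to justify the independence, and handling the $n-k-1$ versus $n-k$ subtlety caused by the vertex $w$. Everything else runs in parallel with the proof of \eqref{eqn:1}.
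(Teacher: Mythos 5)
Your proposal is correct and follows essentially the same route as the paper: factor $A_j$ into the inclusion of all sub-$k$-dimensional faces on $j$ (giving $\prod_{i=1}^{k-1} p_i^{\binom{k+1}{i+1}}$), the exclusion of the $k$-simplex on $j$ (giving $1-p_k$), and the exclusion of a $k$-simplex on $\tau\cup\{u\}$ for each of the $n-k-1$ outside vertices, using disjointness of the governing simplex collections for independence. Your extra remark explaining why the exponent is $n-k-1$ (the only in-$j$ candidate $k$-simplex containing $\tau$ is already handled by the $1-p_k$ factor) is a correct refinement of what the paper states implicitly.
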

\begin{proof}
The first term calculates the probability that $X$ contains the necessary $i$-faces for $i< k$: we need every subset of $i+1$ vertices of $j$ to form an $i$-simplex. The second term is the requirement that the associated $k$-simplex is not filled in. The last term is ensuring our first $(k-1)$-face does not form a $k$-simplex with any of the remaining $n-k-1$ vertices, which occurs independently with probability $\prod_1^k p_i^{\binom{k}{i}}$ for each vertex. 
\end{proof}
We note that narrowing our consideration to when the first $(k-1)$-face is free simplifies the calculations without altering the relevant probability thresholds. 
\begin{lemma} \label{boundaries}
Let $X \sim X(n, p_1,p_2, \ldots)$ with $p_i = n^{-\alpha_i}$ and $M_{k-1}$ count the number of unfilled boundaries of $k$-simplices in $X$ with free first $(k-1)$-face. If $$1 \leq \sum_1^k \alpha_i \binom{k}{i},\ \sum_1^{k-1} \alpha_i \binom{k+1}{i+1} < k+1 , \textnormal{ and } p_k \neq 1$$ then w.h.p.\ $M_{k-1} > 0$ and $M_{k-1} \sim \E[M_{k-1}]$.
\end{lemma}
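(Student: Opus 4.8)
This will be a first-and-second moment argument of a by-now routine type. For the first moment I would apply linearity of expectation to \eqref{eqn:expy}, giving
$$\E[M_{k-1}] = \binom{n}{k+1}\left(\prod_{i=1}^{k-1}p_i^{\binom{k+1}{i+1}}\right)(1-p_k)\left(1-\prod_{i=1}^{k}p_i^{\binom{k}{i}}\right)^{n-k-1},$$
and then substitute $p_i = n^{-\alpha_i}$. Because $p_k\neq 1$ the factor $1-p_k$ is $\Theta(1)$, and because $\sum_1^{k}\alpha_i\binom{k}{i}\ge 1$ we have $n\prod_1^{k}p_i^{\binom{k}{i}} = n^{1-\sum_1^{k}\alpha_i\binom{k}{i}}$ bounded, so $\left(1-\prod_1^{k}p_i^{\binom{k}{i}}\right)^{n-k-1}$ lies between a fixed positive constant and $1$ for all large $n$. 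Hence $\E[M_{k-1}] = \Theta\!\left(n^{\,k+1-\sum_1^{k-1}\alpha_i\binom{k+1}{i+1}}\right)$, and the hypothesis $\sum_1^{k-1}\alpha_i\binom{k+1}{i+1} < k+1$ makes the exponent strictly positive, so $\E[M_{k-1}]\to\infty$ polynomially in $n$.

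For the second moment I would establish $\var(M_{k-1}) = o\!\left(\E[M_{k-1}]^2\right)$; Chebyshev's inequality then yields both $\P[M_{k-1}=0]\to 0$ and $M_{k-1}\sim\E[M_{k-1}]$ w.h.p.\ at once. Writing $\var(M_{k-1}) = \sum_j\var(1_{A_j}) + \sum_{j\neq j'}\cov(1_{A_j},1_{A_{j'}})$, the diagonal sum is at most $\E[M_{k-1}] = o(\E[M_{k-1}]^2)$. I would bound the off-diagonal terms by grouping the ordered pairs $(j,j')$ according to $t = |j\cap j'|\in\{0,1,\dots,k\}$. There are $\Theta(n^{2(k+1)-t})$ pairs with $|j\cap j'|=t$, and such a pair shares, for each $i\le k-1$, the $\binom{t}{i+1}$ common $i$-faces, all of which are required present, so $\P[A_j\cap A_{j'}]$ carries only one copy of the factor $p_i^{\binom{t}{i+1}}$ rather than two; up to the coupling discussed below this gives
$$\sum_{|j\cap j'|=t}\P[A_j\cap A_{j'}] = O\!\left(\E[M_{k-1}]^2\cdot n^{\,-t+\sum_{i=1}^{k-1}\alpha_i\binom{t}{i+1}}\right).$$
Since $\binom{t}{i+1}\le\tfrac{t}{k+1}\binom{k+1}{i+1}$ for $1\le t\le k$, we get $\sum_{i=1}^{k-1}\alpha_i\binom{t}{i+1}\le\tfrac{t}{k+1}\sum_{i=1}^{k-1}\alpha_i\binom{k+1}{i+1} < t$, so each of these finitely many exponents is negative and every term with $t\ge 1$ is $o(\E[M_{k-1}]^2)$.

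The step needing genuine care is the $t=0$ term and, more broadly, the coupling introduced by the ``free first $(k-1)$-face'' clause. For disjoint $j,j'$ the local conditions defining $A_j$ and $A_{j'}$ (the required internal faces, the empty $k$-simplex) involve disjoint vertex sets and are independent; the only shared randomness is that ``$\tau_j$ lies in no $k$-simplex'' refers to the $O(1)$ vertices of $j'$ and symmetrically, so the two events fail to be independent only through a bounded number of edge indicators, each entering only $O(1)$ of the $\Theta(n)$ conditionally independent vertex-factors. Conditioning on those $O(1)$ edges makes $A_j$ and $A_{j'}$ conditionally independent and, since $\prod_1^k p_i^{\binom{k}{i}} = o(1)$, perturbs each conditional probability by a uniform factor $1+o(1)$ relative to the unconditional one, so $\sum_{|j\cap j'|=0}\P[A_j\cap A_{j'}] = (1+o(1))\E[M_{k-1}]^2$. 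A clean way to keep the bookkeeping under control for general $t$ is to note that deleting the free-face clause only enlarges every $\P[A_j]$ and $\P[A_j\cap A_{j'}]$ and changes $\E[M_{k-1}]$ by at most a constant factor, so the threshold $\sum_1^{k-1}\alpha_i\binom{k+1}{i+1}<k+1$ is already the one controlling the cruder count of unfilled $k$-simplex boundaries, and the refinement is absorbed into the same disjointness estimates. Putting the pieces together gives $\E[M_{k-1}^2] = (1+o(1))\E[M_{k-1}]^2$, hence $\var(M_{k-1}) = o(\E[M_{k-1}]^2)$, from which $M_{k-1}>0$ and $M_{k-1}\sim\E[M_{k-1}]$ w.h.p.\ follow. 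I expect the handling of this free-face coupling in the covariance bounds to be the only nonroutine part; the rest is the usual moment bookkeeping.
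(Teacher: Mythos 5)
Your proposal is correct and takes essentially the same route as the paper: the identical first-moment computation from \eqref{eqn:expy}, followed by a Chebyshev second-moment argument in which the covariance sum is decomposed by the intersection size $t=|j\cap j'|$ and each $t\ge 1$ class is shown to contribute $o(\E[M_{k-1}]^2)$ via the combinatorial bound $\sum_1^{k-1}\alpha_i\binom{t}{i+1}<t$, exactly as in Appendix A. Your shortcut of discarding the free-face clause when bounding the $t\ge 1$ terms (legitimate because $1\le\sum_1^k\alpha_i\binom{k}{i}$ makes the factor $\left(1-\prod_1^k p_i^{\binom{k}{i}}\right)^{n-k-1}$ bounded below by a constant) is only a mild streamlining of the paper's explicit manipulations with the factors $\left(1-2\gamma_k+\gamma_k^2\gamma_I^{-1}\right)^{n-O(1)}$, and your conditioning treatment of the disjoint case matches the paper's $I=0$ computation.
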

\begin{proof}
By linearity of expectation we have
\begin{align*}
\mathbb{E} [ M_{k-1} ] &= \binom{n}{k+1} \left( \prod_{1}^{k-1} p_i^{\binom{k+1}{i+1}} \right)(1-p_k) \left(1 - \prod_{1}^{k} p_i^{\binom{k}{i}} \right)^{n-k-1}  \\
&\approx \frac{1-p_k}{(k+1)!} \left( n^{k+1 - \sum_1^{k-1} \alpha_i \binom{k+1}{i+1} } \right) \left( e^{-n^{1 - \sum_1^k \alpha_i \binom{k}{i} }} \right).
\end{align*}
Our requirements on the $p_i$ imply
\begin{equation} \label{eqn:47}
\mathbb{E} [ M_{k-1} ] = \Theta \left( n^{k+1 - \sum_1^{k-1} \alpha_l \binom{k+1}{i+1} } \right),
\end{equation}
hence $\E [M_{k-1}] \rightarrow \infty$. 

The proof that this implies that $M_{k-1} \sim \E \left[M_{k-1}\right]$ is a straightforward but computationally tedious second moment argument (see e.g.\ \cite{sharp}) that can be found in Appendix A.
\end{proof}
\begin{proof}[Proof of the second part of Theorem \ref{middle}]

It follows from Lemma \ref{boundaries} that w.h.p.\ $M_{k-1} > 0$. Consider such a subcomplex $\sigma$. As the boundary of a $k$-simplex, a signed sum of its $(k-1)$-faces is in the kernel of the $(k-1)$-boundary map. Since one of these faces, $\tau$, is not contained in a $k$-simplex of $X$, no $(k-1)$-chain with a non-zero coefficient of $\tau$ can be a $(k-1)$-boundary of $X$. Thus we have a non-trivial cycle no multiple of which is a boundary, contributing a $\Z$-summand to $H_{k-1} (X , \Z)$ and a $\mathbb{Q}$-cycle to $H_{k-1}(X,\mathbb{Q})$. By the Universal Coefficients Theorem we conclude $H^{k-1} (X, \mathbb{Q}) \cong H_{k-1} (X, \mathbb{Q}) \neq 0$. 
\end{proof}

\end{section}

\begin{section}{Nontrivial Cohomology: Betti Numbers Argument}

We now consider when $$1 < \sum_1^k \alpha_i \binom{k}{i} \textnormal{ and } \sum_1^{k-1} \alpha_i \binom{k-1}{i} < 1,$$ proving the other half of Theorem \ref{middle}. 
\begin{proof}[Proof of the first part of Theorem \ref{middle}]

For $X \sim X(n, p_1, p_2, \ldots)$, with the aforementioned conditions on $p_i$, we let $f_i$ denote the number of $i$-simplices in $X$ and $\beta^i = \textrm{dim} \left( H^i(X, \mathbb{Q}) \right)$. Linear algebra considerations tell us 
\begin{equation}
f_{k-1} \geq \beta^{k-1} \geq f_{k-1} - f_k - f_{k-2}. \label{eq:3}
\end{equation}
Thus showing that w.h.p.\ $ f_{k-1} > f_k + f_{k-2}$ implies $\beta^{k-1} > 0$. We begin by calculating the expected number of faces in these dimensions:
\begin{align*}
\E \left[ f_{k-2} \right] &= \binom{n}{k-1} \prod_1^{k-2} p_i^{\binom{k-1}{i+1}} \\
\E \left[ f_{k-1} \right] &= \binom{n}{k} \prod_1^{k-1} p_i^{\binom{k}{i+1}} \\
\E\left[ f_{k} \right] &= \binom{n}{k+1} \prod_1^{k} p_i^{\binom{k+1}{i+1}}.
\end{align*}

By linearity of expectation
\begin{equation*}
\E \left[ f_{k-1} \right] \geq \E \left[\beta^{k-1} \right] \geq \E \left[ f_{k-1} \right]-\E \left[ f_{k-2} \right]-\E \left[ f_{k} \right].
\end{equation*}
Comparing the expectations in different dimensions we see
\begin{equation} \label{dominate1}
\frac{\E[ f_{k}] }{\E[ f_{k-1}]} = \frac{n-k}{k+1} \prod_1^k p_i^{\binom{k+1}{i+1} - \binom{k}{i+1}} = \frac{n-k}{k+1} \prod_1^k p_i^{\binom{k}{i}} \leq  n \prod_1^k p_i^{\binom{k}{i}} = o(1) ,
\end{equation}
because $$\prod_1^k p_i^{\binom{k}{i}} < n^{-1}$$ by hypothesis. Similarly, since $$\prod_1^{k-1} p_i^{\binom{k-1}{i}} = n^{c-1}$$ for some $c>0$, we have
\begin{equation} \label{dominate2}
\frac{\E[ f_{k-2}] }{\E[ f_{k-1}]} = \frac{k}{n-k+1} \prod_1^{k-1} p_i^{\binom{k-1}{i+1} - \binom{k}{i+1}} = \frac{k}{n-k+1} \prod_1^{k-1} p_i^{-\binom{k-1}{i}} = \frac{k n^{1-c}}{n-k+1}  = o(1).
\end{equation} 
Thus $\E \left[ f_{k-1} \right]$ asymptotically dominates the other two terms.

Before proceeding we will introduce some notation. We write $X \sim Y$ with high probability if for all $\epsilon > 0$, we have
\begin{equation*}
\lim_{n \rightarrow \infty} \P [ (1-\epsilon) \leq Y/X \leq (1+ \epsilon)] \rightarrow 1.
\end{equation*}
Letting $$\tilde{f}_{k-1} := f_{k-1} - f_k - f_{k-2},$$ it follows from \eqref{dominate1} and \eqref{dominate2} that
\begin{equation}
\E [\tilde{f}_{k-1} ] \sim \E [ \beta^{k-1}] \sim \E[ f_{k-1}]. \label{eq:4}
\end{equation} 

To prove stronger statements about $\beta^{k-1}$ we again make use of Chebyshev's Inequality. That is, if $Z$ is a random variable with $\E \left[Z\right] \rightarrow \infty$ and $\textnormal{Var}\left[Z\right] = o \left(\E[Z]^2 \right)$, then w.h.p.\ $Z \sim \E \left[Z \right]$.

Now
\begin{align*}
\textnormal{Var} \left[f_{k-1} \right] &= \E \left[f_{k-1}^2 \right] - \E \left[f_{k-1} \right]^2 \\
&= \E \left[f_{k-1}^2 \right] - \binom{n}{k}^2   \left( \prod_1^{k-1} p_i^{2\binom{k}{i+1}} \right).
\end{align*}

It remains to calculate $ \E[f_{k-1}^2]$. For any $j \in \binom{[n]}{k}$ let $E_j$  be the event that the vertices of $j$ span a $(k-1)$-face in $X$. Then
\begin{align*}
\E \left[ f_{k-1}^2 \right] &= \sum_{j,l \in \binom{[n]}{k}} \P[E_j \cap E_l] = \binom{n}{k} \sum_{l \in \binom{[n]}{k}} \P[E_j \cap E_l].
\end{align*}
The second equality follows by symmetry and fixing some set of vertices $j$, say $\{ 1, \ldots, k \}$. We proceed by grouping the $l$ according to $\abs{j \cap l}$. Through this approach we see
\begin{align*}
\E[f_{k-1}^2] &= \binom{n}{k} \sum_{l \in \binom{[n]}{k}} \P[A_j \cap A_l] \\
&= \binom{n}{k} \sum_{m=0}^k \binom{k}{m} \binom{n-k}{k-m} \left(  \prod_{i=1}^{k-1} p_i^{2\binom{k}{i+1} - \binom{m}{i+1}}  \right) \\
&= \binom{n}{k} \prod_{i=1}^{k-1} p_i^{2\binom{k}{i+1}} \left( \sum_{m=0}^k \binom{k}{m} \binom{n-k}{k-m} \prod_{i=1}^{m-1}p_i^{- \binom{m}{i+1}} \right).
\end{align*}
We pull the $m=0$ term out of the summation and use $\binom{n-k}{k} < \binom{n}{k}$ to see
\begin{equation*}
\E[f_{k-1}^2] \leq \E[f_{k-1}]^2 + \binom{n}{k} \prod_{1}^{k-1} p_i^{2\binom{k}{i+1}} \left( \sum_{m=1}^k \binom{k}{m} \binom{n-k}{k-m} \prod_1^{m-1}p_i^{- \binom{m}{i+1}} \right).
\end{equation*}

We observe
\begin{align*}
\frac{\textrm{Var} [f_{k-1}]}{\E[f_{k-1}]^2} &\leq \frac{\binom{n}{k} \prod_{1}^{k-1} p_i^{2\binom{k}{i+1}} \left( \sum_{m=1}^k \binom{k}{m} \binom{n-k}{k-m} \prod_1^{m-1}p_i^{- \binom{m}{i+1}} \right)}{\binom{n}{k}^2   \left( \prod_1^{k-1} p_i^{2\binom{k}{i+1}} \right)} \\
&= \frac{ \sum_{m=1}^k \binom{k}{m} \binom{n-k}{k-m} \prod_1^{m-1}p_i^{- \binom{m}{i+1}} }{\binom{n}{k}} \\
&= \sum_{m=1}^k O \left( n^{-m} \prod_1^{m-1}p_i^{- \binom{m}{i+1}} \right) \\
&= o(1).
\end{align*}
The final line holds from our hypotheses since $$\sum_1^{m-1} \alpha_i \binom{m}{i+1} \leq \frac{m}{k} \left( \sum_1^{k-1} \alpha_i \binom{k}{i+1} \right) \leq \frac{m}{k}\left( k \cdot \sum_1^{k-1} \alpha_i \binom{k-1}{i}  \right)< m,$$ so for $m=1, \ldots, k$, $$\prod_1^{m-1}p_i^{- \binom{m}{i+1}} = n^{\sum_1^{m-1} \alpha_i \binom{m}{i+1}} = o \left( n^m \right).$$ We conclude $f_{k-1} \sim \E[f_{k-1}]$.

We note that nothing in the above argument is unique to $f_{k-1}$, so w.h.p.\ $-f_{k-2} \sim \E \left[-f_{k-2} \right]$ and $-f_k \sim \E \left[-f_k \right]$. By linearity of expectation $\tilde{f}_{k-1} \sim \E[\tilde{f}_{k-1}]$, then from \eqref{eq:3} and \eqref{eq:4} we conclude that w.h.p.\ $\beta^{k-1} \sim \E [\beta^{k-1}] \sim f_{k-1}$. Thus $\beta^{k-1} = \textnormal{dim} \left( H^{k-1} (X , \mathbb{Q} ) \right) \neq 0$ w.h.p., which completes our proof.
\end{proof}
In fact, under these conditions we have proven a stronger result than nontrivial homology.
\begin{lemma}
Let $X \sim X(n,p_1, p_2, \ldots)$ with $p_i = n^{-\alpha_i}$ and $\alpha_i \geq 0$ for all $i$, $f_{k-1}$ count the number of $(k-1)$-faces of $X$, and $\beta^{k-1}$ be the $(k-1)$-th Betti number. If
\begin{equation*}
\sum_{i=1}^{k-1} \alpha_i \binom{k-1}{i} < 1 < \sum_{i=1}^{k} \alpha_i \binom{k}{i} ,
\end{equation*}
then w.h.p.\ $f_{k-1} \sim \beta^{k-1}$.
\end{lemma}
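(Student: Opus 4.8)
The plan is to observe that this lemma merely records the conclusion already reached in the course of proving the first part of Theorem~\ref{middle} (whose hypotheses are identical), and to reassemble the relevant pieces of that argument. Recall the purely linear-algebraic sandwich \eqref{eq:3},
\begin{equation*}
f_{k-1} \;\geq\; \beta^{k-1} \;\geq\; f_{k-1} - f_k - f_{k-2},
\end{equation*}
valid for any finite simplicial complex. Since $\beta^{k-1}\le f_{k-1}$ is automatic, it suffices to prove that w.h.p.\ $\beta^{k-1}\ge (1-o(1))f_{k-1}$, and for this it is enough to show that w.h.p.\ $f_{k-1}\sim\E[f_{k-1}]$ while simultaneously $f_k+f_{k-2}=o(\E[f_{k-1}])$.

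First I would record that $\E[f_{k-1}] = \binom{n}{k}\prod_1^{k-1}p_i^{\binom{k}{i+1}}$ grows polynomially: its exponent $k-\sum_1^{k-1}\alpha_i\binom{k}{i+1}$ is positive because $\binom{k}{i+1}\le k\binom{k-1}{i}$ for $i\ge 1$ and $\sum_1^{k-1}\alpha_i\binom{k-1}{i}<1$ by hypothesis, so $\E[f_{k-1}]\to\infty$. Next, the domination computations \eqref{dominate1} and \eqref{dominate2} give $\E[f_k]=o(\E[f_{k-1}])$ (using $\prod_1^k p_i^{\binom{k}{i}}<n^{-1}$, the hypothesis $1<\sum_1^k\alpha_i\binom{k}{i}$) and $\E[f_{k-2}]=o(\E[f_{k-1}])$ (using the hypothesis $\sum_1^{k-1}\alpha_i\binom{k-1}{i}<1$). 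The second-moment estimate carried out in the proof of the first part of Theorem~\ref{middle} shows $\mathrm{Var}[f_{k-1}]=o(\E[f_{k-1}]^2)$, the key inequality being $\sum_1^{m-1}\alpha_i\binom{m}{i+1}\le \tfrac{m}{k}\sum_1^{k-1}\alpha_i\binom{k}{i+1}\le m\sum_1^{k-1}\alpha_i\binom{k-1}{i}<m$ for $1\le m\le k$; hence Chebyshev's inequality gives $f_{k-1}\sim\E[f_{k-1}]$ w.h.p. Finally, for the lower-order terms only Markov's inequality is needed: for each $\epsilon>0$ one has $\P[f_k\ge\epsilon\E[f_{k-1}]]\le\E[f_k]/(\epsilon\E[f_{k-1}])=o(1)$, and likewise for $f_{k-2}$, so w.h.p.\ $f_k+f_{k-2}=o(\E[f_{k-1}])=o(f_{k-1})$.

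Combining these facts, w.h.p.
\begin{equation*}
(1-o(1))\,f_{k-1}\;\le\; f_{k-1}-f_k-f_{k-2}\;\le\;\beta^{k-1}\;\le\; f_{k-1},
\end{equation*}
which is exactly $f_{k-1}\sim\beta^{k-1}$. I do not expect any real obstacle here, since every ingredient is already available in the section above; the only subtlety worth flagging is that full concentration of $f_k$ is \emph{not} guaranteed under these hypotheses (the corresponding variance ratio would require $\sum_1^k\alpha_i\binom{k+1}{i+1}<k+1$, which is not assumed), but this is harmless because only the one-sided Markov bound on $f_k$ enters the argument.
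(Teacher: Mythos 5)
Your proposal is correct and follows essentially the paper's own route: the sandwich \eqref{eq:3}, the expectation comparisons \eqref{dominate1}--\eqref{dominate2}, and the second-moment/Chebyshev concentration of $f_{k-1}$ driven by the inequality $\sum_1^{m-1}\alpha_i\binom{m}{i+1}<m$. The one point where you deviate---bounding $f_k$ and $f_{k-2}$ by Markov rather than asserting they concentrate---is in fact a slight improvement in rigor, since the paper's remark that ``nothing in the above argument is unique to $f_{k-1}$'' can fail for $f_k$ under these hypotheses (e.g.\ when $\E[f_k]$ stays bounded), while your one-sided bound is all the sandwich requires.
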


Our proof also shows that allowing $$\sum_1^{k} \alpha_i \binom{k}{i} = 1$$ still ensures nontrivial cohomology.
\begin{lemma}
If $$\sum_{i=1}^{k} \alpha_i \binom{k}{i} = 1,$$ then w.h.p. $H^{k-1} (X , \mathbb{Q}) \neq 0$ and $$\beta^{k-1} \geq \left(\frac{k}{k+1} \right) f_{k-1}.$$
\end{lemma}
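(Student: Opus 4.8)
The plan is to reuse, with one modification, the second-moment argument from the proof of the first part of Theorem \ref{middle}, since the present hypothesis differs from that one only in replacing $1<\sum_1^k\alpha_i\binom ki$ by the equality $\sum_1^k\alpha_i\binom ki=1$. First I would check that this equality still forces $\sum_1^{k-1}\alpha_i\binom{k-1}{i}<1$ strictly: by Pascal's rule $\sum_1^k\alpha_i\binom ki=\sum_1^{k-1}\alpha_i\binom{k-1}{i}+\bigl(\sum_1^{k-1}\alpha_i\binom{k-1}{i-1}+\alpha_k\bigr)$, and the parenthesised term is strictly positive (it vanishes only if $\alpha_1=\dots=\alpha_k=0$, which would make the total $0\neq1$). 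Consequently $\prod_1^{k-1}p_i^{\binom{k-1}{i}}=n^{c-1}$ for some $c>0$ and, since $\sum_1^{k-1}\alpha_i\binom{k}{i+1}\le k\sum_1^{k-1}\alpha_i\binom{k-1}{i}<k$, also $\E[f_{k-1}]\to\infty$. These are precisely the two facts the variance estimates of Section 6 rely on, so those estimates (for $f_{k-2}$, $f_{k-1}$ and $f_k$) go through unchanged, and by Chebyshev $f_{k-2}\sim\E[f_{k-2}]$, $f_{k-1}\sim\E[f_{k-1}]$, $f_k\sim\E[f_k]$ w.h.p.

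The single estimate that changes is the expectation ratio \eqref{dominate1}: the bound $\E[f_{k-2}]/\E[f_{k-1}]=o(1)$ from \eqref{dominate2} survives verbatim, but now $\prod_1^kp_i^{\binom ki}=n^{-1}$ exactly, so
\begin{equation*}
\frac{\E[f_k]}{\E[f_{k-1}]}=\frac{n-k}{k+1}\prod_1^kp_i^{\binom ki}=\frac{n-k}{(k+1)n}\longrightarrow\frac1{k+1}
\end{equation*}
rather than tending to $0$. Combining this with the concentration just noted, w.h.p.\ $f_k=\bigl(\tfrac1{k+1}+o(1)\bigr)f_{k-1}$ and $f_{k-2}=o(f_{k-1})$. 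Feeding this into the deterministic inequality \eqref{eq:3}, namely $\beta^{k-1}\ge f_{k-1}-f_k-f_{k-2}$, gives w.h.p.
\begin{equation*}
\beta^{k-1}\ge\Bigl(1-\tfrac1{k+1}-o(1)\Bigr)f_{k-1}=\Bigl(\tfrac{k}{k+1}-o(1)\Bigr)f_{k-1};
\end{equation*}
since $f_{k-1}\sim\E[f_{k-1}]\to\infty$ this forces $\beta^{k-1}>0$, hence $H^{k-1}(X,\mathbb Q)\neq0$ w.h.p., and it establishes the stated bound (understood, as the $o(1)$ shows, as: for every fixed $\ep>0$, w.h.p.\ $\beta^{k-1}\ge(\tfrac{k}{k+1}-\ep)f_{k-1}$).

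I do not anticipate a genuine obstacle here; the only thing to verify carefully is that nothing in the Section 6 argument secretly used the strict inequality $1<\sum_1^k\alpha_i\binom ki$ rather than the weaker $\sum_1^{k-1}\alpha_i\binom{k-1}{i}<1$. It did not: that strict inequality was used only to make $\E[f_k]=o(\E[f_{k-1}])$, and its boundary analogue $\E[f_k]\sim\tfrac1{k+1}\E[f_{k-1}]$ is exactly what the factor $\tfrac{k}{k+1}$ in the conclusion records. The residual bookkeeping — confirming the $m$-wise variance bounds still hold, e.g.\ via $\sum_1^k\alpha_i\binom{k+1}{i+1}\le\tfrac{k+1}{2}<k+1$ — is routine.
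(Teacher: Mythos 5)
Your proposal is correct and follows essentially the same route as the paper: compute $\E[f_k]/\E[f_{k-1}] \to 1/(k+1)$ at the boundary, reuse the Section 6 concentration of $f_{k-2}$, $f_{k-1}$, $f_k$ around their means, and feed this into the bound $\beta^{k-1} \geq f_{k-1} - f_k - f_{k-2}$. Your additional verifications (that the hypothesis forces $\sum_1^{k-1}\alpha_i\binom{k-1}{i} < 1$, and that the variance estimates use only this together with $\sum_1^{k}\alpha_i\binom{k+1}{i+1} < k+1$) are details the paper leaves implicit rather than a different argument.
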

\begin{proof}
We first calculate
\begin{align*}
\frac{\E[f_k]}{\E[f_{k-1}]} &= \frac{n-k}{k+1} \prod_1^k p_i^{\binom{k}{i}} \\
&= \frac{n-k}{n(k+1)} \\
&\approx \frac{1}{k+1}.
\end{align*}
The machinery established in the previous section then does the work for us. Since $\beta^{k-1}$ is bounded between $f_{k-1}$ and $f_{k-1} - f_k - f_{k-2}$, with $f_{k-1} \sim \E[f_{k-1}]$ and $(f_{k-1} - f_k - f_{k-2}) \sim \E[(f_{k-1} - f_k - f_{k-2})] \sim \left(\frac{k}{k+1} \right) \E[f_{k-1}]$, our result follows immediately.
\end{proof}

\end{section}

\begin{section}{Behavior at the Boundary}
In this section we explore the behavior of the $(k-1)$-th cohomology of $X(n, p_1, p_2, \ldots)$ at the upper threshold line. Specifically, we refine the parameters of our $p_i$ to elicit some interesting behavior and prove Theorem \ref{bettiboundary}.

\begin{subsection}{Free faces}
To get the threshold for free faces, and thus trivial cohomology, we must slightly refine our model. Unfortunately there is no concise way to categorize these $p_i$. We consider when $$p_i  = \left(\rho_1 \log n + \rho_2 \log \log n + c\right)^{\beta_i} n^{-\alpha_i}$$ for some constants $\beta_i, \rho_1, \rho_2$, and $c$, with $$\sum_1^k \alpha_i \binom{k}{i} = 1.$$ It follows that
\begin{align*}
\mathbb{E} [ N_{k-1} ] &\approx \frac{n^k}{k!} \left(\prod_{i=1}^{k-1} p_i^{ \binom{k}{i+1}} \right) \left( e^{-n \left( \prod_{i=1}^{k} p_i^{\binom{k}{i}} \right)} \right) \\
&= \frac{n^{k-\sum_1^{k-1} \alpha_i \binom{k}{i+1}}}{k!} \left[ \prod_1^{k-1} \left(  \left(\rho_1 + o(1)\right) \log n  \right)^{\beta_i \binom{k}{i+1}} \right] e^{- \prod_1^k \left( \rho_1 \log n + \rho_2 \log \log n + c\right)^{\beta_i \binom{k}{i}}} \\
&= \frac{n^{k-\sum_1^{k-1} \alpha_i \binom{k}{i+1}}}{k!} \left(  \left(\rho_1 + o(1)\right) \log n  \right)^{\sum_1^{k-1} \beta_i \binom{k}{i+1}}  e^{- \left( \rho_1 \log n + \rho_2 \log \log n + c\right)^{\sum_1^k \beta_i \binom{k}{i}}  }.
\end{align*}
Letting $$\sum_1^k \beta_i \binom{k}{i} = 1,$$ we have
\begin{align*}
\E [ N_{k-1}] &\approx \frac{n^{k-\sum_1^{k-1} \alpha_i \binom{k}{i+1}}}{k!}  \left(  \left(\rho_1 + o(1)\right) \log n  \right)^{\sum_1^{k-1} \beta_i \binom{k}{i+1}} e^{-(\rho_1 \log n + \rho_2 \log \log n + c  )} \\
&= \frac{n^{k-\sum_1^{k-1} \alpha_i \binom{k}{i+1}}}{k!}  \left(  \left(\rho_1 + o(1)\right) \log n  \right)^{\sum_1^{k-1} \beta_i \binom{k}{i+1}} n^{-\rho_1} (\log n)^{-\rho_2} e^{-c}.
\end{align*}
If we set $$\rho_1 = k - \sum_1^{k-1} \alpha_i \binom{k}{i+1}$$ and $$\rho_2 = \sum_1^{k-1} \beta_i \binom{k}{i+1},$$ then
\begin{equation}
\mathbb{E} [ N_{k-1} ] \rightarrow \frac{\rho_1^{\rho_2} e^{-c}}{k!}
\end{equation}
as $n \rightarrow \infty$. We then establish the following result.

\begin{lemma} \label{boundary1}
Let $X \sim X(n,p_1, p_2, \ldots)$ with $$p_i = \left(\rho_1 \log n + \rho_2 \log \log n + c \right)^{\beta_i}n^{-\alpha_i}$$ such that $$\rho_1 = k - \sum_1^{k-1} \alpha_i \binom{k}{i+1}$$ and $$\rho_2 = \sum_1^{k-1} \beta_i \binom{k}{i+1}.$$ If $$\sum_1^k \alpha_i \binom{k}{i} = 1 = \sum_1^k \beta_i \binom{k}{i},$$ then $N_{k-1}$ the number of free $(k-1)$-faces in $X$ approaches a Poisson distribution
\begin{equation*}
N_{k-1} \rightarrow \Poi (\mu)
\end{equation*}
with mean
\begin{equation*}
\mu = \frac{\rho_1^{\rho_2} e^{-c}}{k!}.
\end{equation*}
\end{lemma}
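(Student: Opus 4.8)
The plan is to prove Poisson convergence by the method of factorial moments: since $N_{k-1} = \sum_{j \in \binom{[n]}{k}} 1_{C_j}$ is a sum of indicators, it suffices to show that for every fixed integer $r \ge 1$ the factorial moment $\E\big[N_{k-1}(N_{k-1}-1)\cdots(N_{k-1}-r+1)\big]$ converges to $\mu^r$, the corresponding factorial moment of $\Poi(\mu)$; this is the standard moment criterion for convergence in distribution to a Poisson law. The case $r=1$ is exactly the computation $\E[N_{k-1}] \to \mu$ carried out just above. For $r \ge 2$ I expand
\[
\E\big[N_{k-1}(N_{k-1}-1)\cdots(N_{k-1}-r+1)\big] = \sum_{(j_1,\dots,j_r)} \P\big[C_{j_1}\cap\cdots\cap C_{j_r}\big],
\]
the sum running over ordered $r$-tuples of distinct $k$-subsets of $[n]$, and split it according to whether the sets $j_1,\dots,j_r$ are pairwise disjoint.

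For the pairwise disjoint tuples, of which there are $(1+o(1))\binom{n}{k}^r$, the events $C_{j_m}$ are almost independent: the simplices internal to the individual $j_m$ lie in disjoint blocks, and the only coupling between $C_{j_m}$ and $C_{j_{m'}}$ is carried by the $O(1)$ simplices joining one vertex of one block to vertices of another and by the $O(1)$ non-filling conditions asking a vertex of $j_{m'}$ not to fill $j_m$. Each such simplex is present with probability $o(1)$ and each affected non-filling condition holds with probability $1-o(1)$, so conditioning on them shifts each $\P[C_{j_m}]$ by a factor $1+o(1)$; hence $\P[C_{j_1}\cap\cdots\cap C_{j_r}] = (1+o(1))\,\P[C_j]^r$ uniformly over disjoint tuples, and the disjoint tuples together contribute $(1+o(1))\big(\binom{n}{k}\P[C_j]\big)^r = (1+o(1))\,\E[N_{k-1}]^r \to \mu^r$.

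It remains to show the overlapping tuples contribute $o(1)$. Group them by the intersection pattern; with $U=\bigcup_m j_m$, order the blocks and set $s_m = |j_m \cap (j_1\cup\cdots\cup j_{m-1})|$, so $s:=\sum_m s_m = kr-|U| \ge 1$ for an overlapping tuple, and a fixed pattern is realized by $O(n^{kr-s})$ tuples. Retaining only the non-filling conditions coming from the $n-|U|$ vertices outside $U$ and conditioning on all $j_m\in X$ (which fixes every simplex inside $U$), these conditions become independent over the outside vertices, so
\[
\P\big[C_{j_1}\cap\cdots\cap C_{j_r}\big] \le \Big(\prod_{i=1}^{k-1} p_i^{|F_i|}\Big)\Big(\P\big[v\text{ fills no }j_m \mid \text{all }j_m\in X\big]\Big)^{\,n-|U|},
\]
where $F_i$ is the set of $i$-faces of $U$ and $v$ is any fixed outside vertex. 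A Bonferroni bound gives $\P[v\text{ fills no }j_m\mid\cdot] \le 1 - rq + \sum_{m<m'} q^2\prod_i p_i^{-\binom{|j_m\cap j_{m'}|}{i}}$ with $q=\prod_{i=1}^{k}p_i^{\binom{k}{i}}$; since $|j_m\cap j_{m'}|\le k-1$ forces $\sum_i\alpha_i\binom{|j_m\cap j_{m'}|}{i} < \sum_i\alpha_i\binom{k}{i}=1$, each correction term is $o(q)$, so this probability is $1 - rq(1-o(1))$ and, using $nq = \rho_1\log n + \rho_2\log\log n + c$, the last factor has the same order $(1-q)^{(n-k)r}$ as in the disjoint case up to a factor $n^{o(1)}$. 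Combining this with the exact identity $r\binom{k}{i+1} - |F_i| = \sum_m\binom{s_m}{i+1}$ and with $\binom{n}{k}\P[C_j]\to\mu$ (equivalently $n^{kr}\prod_i p_i^{r\binom{k}{i+1}}(1-q)^{(n-k)r}=\Theta(1)$), the contribution of a deficit-$s$ pattern is $O\!\big(n^{\Delta-s+o(1)}\big)$, where $\Delta = \sum_m\sum_i\alpha_i\binom{s_m}{i+1}$.

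The crux, and the step I expect to be the main obstacle (though it turns out to be short), is the combinatorial inequality $\Delta < s$ for every overlapping pattern, which makes each such contribution $o(1)$ and closes the argument. Writing $s-\Delta = \sum_m\big(s_m - \sum_i\alpha_i\binom{s_m}{i+1}\big)$ and using $\binom{t}{i+1}\le\tfrac{t-1}{2}\binom{t}{i}$ for $i\ge1$ together with $\sum_i\alpha_i\binom{t}{i}\le\sum_i\alpha_i\binom{k}{i}=1$ for $t\le k$ (valid since $\alpha_i\ge0$), each summand is at least $(s_m+1)/2$, so $s-\Delta \ge \tfrac12\sum_{m:\,s_m\ge1}(s_m+1) > 0$ whenever the tuple is not pairwise disjoint. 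Thus the finitely many overlapping patterns contribute $o(1)$ in total, giving $\E\big[N_{k-1}(N_{k-1}-1)\cdots(N_{k-1}-r+1)\big]\to\mu^r$ for every $r$, hence $N_{k-1}\to\Poi(\mu)$. All that remains is the routine but lengthy bookkeeping of the $1+o(1)$ factors and the enumeration of patterns, which parallels the second-moment estimates of Section 4 and of \cite{sharp}.
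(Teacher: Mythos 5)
Your proposal is correct and rests on the same underlying engine as the paper --- the method of (factorial) moments, with pairwise-disjoint tuples supplying the main term $\E[N_{k-1}]^r \to \mu^r$ --- but the execution differs in two genuine ways. First, you expand $\E\bigl[(N_{k-1})_r\bigr]$ directly as a sum over ordered $r$-tuples of \emph{distinct} $k$-sets, which makes the paper's detour through raw moments $\E[N_{k-1}^l]$ and the subsequent Stirling-number inversion (used in Appendix B to handle repeated indices) unnecessary. Second, where the paper treats overlapping-but-distinct faces by the explicit $l=2$ computation (the intersection-size-$m$ terms with the auxiliary constants $d_m$) and then appeals to ``an identical argument'' for higher $l$, you give a uniform bound over all overlap patterns: count a deficit-$s$ pattern by $O(n^{kr-s})$ tuples, show the non-filling factor matches the disjoint case up to $n^{o(1)}$, and win via the combinatorial inequality $\Delta=\sum_m\sum_i\alpha_i\binom{s_m}{i+1}\le\sum_m\frac{s_m-1}{2}<s$, which uses only $\alpha_i\ge 0$, $s_m\le k$ and $\sum_i\alpha_i\binom{k}{i}=1$ and gives a clean polynomial margin $s-\Delta\ge 1$ that the $n^{o(1)}$ corrections cannot erode. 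This is tidier and arguably more robust at the critical window than the paper's term-by-term exponent comparisons.

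Two small imprecisions, neither fatal: the claimed ``exact identity'' $r\binom{k}{i+1}-|F_i|=\sum_m\binom{s_m}{i+1}$ is in general only an inequality ($|F_i|\ge\sum_m\bigl(\binom{k}{i+1}-\binom{s_m}{i+1}\bigr)$, since a face of $j_m$ lying in the $s_m$ previously seen vertices need not lie in any single earlier block), but since $p_i\le 1$ the inequality points the right way for your upper bound; and conditioning on all $j_m\in X$ fixes only the simplices interior to each block, not all simplices inside $U$, though this is exactly what your conditional-independence step actually needs.
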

\begin{proof}
We prove this with a tedious and fairly standard factorial moment argument, found in Appendix B.
\end{proof}

\end{subsection}

\begin{subsection}{Betti Numbers}
At criticality, if we condition on the event $N_{k-1} =0$ then slightly modifying our arguments in Section 4 will show $H^{k-1}(X, \mathbb{Q}) = 0$ w.h.p. This enables us to use the limiting distribution of $N_{k-1}$ to prove an identical result for $\beta^{k-1}$.

\begin{proof}[Proof of Theorem \ref{bettiboundary}]
From Lemma \ref{boundary1} we know that given these hypotheses $N_{k-1} \rightarrow \Poi(\mu)$. We suppose $N_{k-1} = m$ for some $m \in \Z$. The characteristic functions of these $m$ free faces are $(k-1)$-cocycles. We show these cocycles are not coboundaries, and in fact constitute the only cohomological cocycles of dimension $k-1$ in $X$.

We label these faces $\sigma_1, \ldots, \sigma_m$ and their respective characteristic functions $\phi_1, \ldots, \phi_m$. Letting $R_{k-2}$ count the number of $(k-2)$-faces of $X$ contained in $m$ or fewer $(k-1)$-faces, we have
\begin{align*}
\E [R_{k-2}] &= \binom{n}{k-1} \prod_{i=1}^{k-2} p_i^{\binom{k-1}{i+1}} \left( \sum_{j=0}^m \binom{n-k+1}{j} \left(  \prod_{i=1}^{k-1} p_i^{\binom{k-1}{i}}\right)^j   \left(1-  \prod_{i=1}^{k-1} p_i^{\binom{k-1}{i}}       \right)^{n-k+1-j} \right) \\
&= o(e^{-n^{-\ep}}) \textrm{ for some $\ep >0$.}
\end{align*}
This holds since by our hypotheses $$ n \left(\prod_1^{k-1} p_i^{\binom{n-1}{i}} \right) \rightarrow \infty,$$ so the right-most term is exponentially decaying and dominates the expression.

Therefore w.h.p.\ $X$ contains no $(k-2)$-face contained solely in some combination of our $\sigma_i$. We now suppose there exists some $(k-2)$-cochain $\lambda$ such that $\delta^{k-2}(\lambda) = \sum_1^m a_i \phi_i$ with $a_i \neq 0$ for some $i$. It follows that $\lambda$ is not a $(k-2)$-coboundary. We now consider the subcomplex $X' = X- \{\sigma_1, \ldots, \sigma_m\}$, and observe $R_{k-2} = 0$ implies that $X'$ no free $(k-2)$-faces. Since $\sum_1^{k-1} \alpha_i \binom{k-1}{i} < 1$, it follows Theorem \ref{upper} that w.h.p.\ $H^{k-2} (X', \mathbb{Q}) = 0$. But $\delta^{k-2} ( \lambda) = 0$ in $X'$ and $\lambda$ isn't a coboundary in $X$ or $X'$, yielding a contradiction. Therefore no such $\lambda$ exists and we conclude each $\phi_i$ generates a unique nontrivial cocycle in $H^{k-1} \left(X, \mathbb{Q} \right)$.

To show these cochains are the only contributors to cohomology we again consider $X'$. By construction $X'$ has no free $(k-1)$-faces, and a reworking of our proof of Theorem \ref{upper} (primarily refining our estimate in Lemma \ref{linkverts} to show Lemma \ref{forspec} still holds) tells us $H^{k-1}(X', \mathbb{Q}) = 0$ w.h.p. It follows that $H^{k-1} \left(X, \mathbb{Q} \right) \cong \mathbb{Q}^m$.
\end{proof}

Implicit in our proof is the result that when $$\sum_1^k \alpha_i \binom{k}{i} =1,$$ the presence of free $(k-1)$-faces is a necessary and sufficient condition for $H^{k-1} (X, \mathbb{Q}) \neq 0$.

\end{subsection}

\end{section}

\begin{section}{Trivial Homology: A Lower Bound}
In this section we prove Theorem \ref{lower}. The requirement $$k+1 < \sum_1^{k-1} \alpha_i \binom{k+1}{i+1}$$ is exactly the condition that our complex will w.h.p.\ not contain the boundary of a $k$-simplex. Logic dictates that, as the first $(k-1)$-cycle to appear, the threshold for the presence of these subcomplexes should provide a lower bound for trivial homology. We proceed by verifying this intuition, using the fact that minimal homological cycles have bounded vertex support. After establishing these points we may apply a union bound to conclude our result.
\begin{subsection}{Cycles of small vertex support}
We begin with a few definitions identical to those in Section 5 of \cite{general}. For a $(k-1)$-chain $C$ the \emph{support} of $C$ is the union of $(k-1)$-faces with non-zero coefficients in $C$, while the \emph{vertex support} is the underlying vertex set of the support. A pure $(k-1)$-dimensional subcomplex $K$ is \emph{strongly connected} if every pair of $(k-1)$-faces $\sigma , \tau \in K^{k-1}$ can be connected by a sequence of faces $\sigma = \sigma_0 , \sigma_1, \ldots, \sigma_j = \tau$ such that $\textnormal{dim}(\sigma_i \cap \sigma_{i+1}) = k-2$ for $0 \leq i \leq j-1$. Every $(k-1)$-cycle is a linear combination of $(k-1)$-cycles with strongly connected support.
\begin{lemma} \label{vsupp}
Let $$1 < \sum_{i=1}^{k-1} \alpha_i \binom{k-1}{i} $$ and fix $D$ such that $$\frac{k- \sum_1^{k-1} \alpha_i \binom{k}{i+1}}{ \sum_1^{k-1} \alpha_i \binom{k-1}{i} - 1} < D.$$ Then w.h.p.\ all strongly connected pure $(k-1)$-dimensional subcomplexes of $X$ have fewer than $D+k$ vertices in their support.
\end{lemma}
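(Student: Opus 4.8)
The plan is to bound the expected number of strongly connected pure $(k-1)$-dimensional subcomplexes on a given number of vertices, and then sum over all sizes at least $D+k$. Fix $t \geq D+k$ and a set $S$ of $t$ vertices. A strongly connected pure $(k-1)$-dimensional subcomplex $K$ with vertex support $S$ must contain at least $t-(k-1)$ many $(k-1)$-faces: indeed, starting from one $(k-1)$-face (which uses $k$ vertices), each subsequent face in a strongly-connecting sequence shares a $(k-2)$-face with an earlier one and hence introduces at most one new vertex, so to cover all $t$ vertices we need at least $t-k+1$ faces. For each $(k-1)$-face present, $X$ must contain all the lower-dimensional faces in its boundary; a careful accounting — essentially the same as in the proof of the formula \eqref{eqn:expy}, but tracking which lower faces are forced by which $(k-1)$-faces and avoiding double counting shared sub-faces — shows that the probability that a specific pure $(k-1)$-complex with vertex support $S$ and exactly $f_{k-1}$ top faces is contained in $X$ is at most $\left(\prod_1^{k-1} p_i^{\binom{k}{i+1}}\right)^{f_{k-1}}$, and in the strongly connected minimal case one gets a bound of the form $n^{-(\sum_1^{k-1}\alpha_i\binom{k-1}{i})(f_{k-1}) + O(1)}$ per face after absorbing the overlap terms. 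First I would make this counting precise: the number of ways to build such a complex on $S$ is at most a constant depending only on $k$ and $t$ (a function of $t$, not of $n$), since once $S$ is fixed everything is combinatorial.

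Next I would assemble the first-moment bound. Letting $W_t$ denote the number of strongly connected pure $(k-1)$-dimensional subcomplexes of $X$ with exactly $t$ vertices in their support, linearity of expectation gives
\begin{equation*}
\E[W_t] \leq \binom{n}{t} \, g(t) \, n^{-(t-k+1)\left(\sum_1^{k-1}\alpha_i\binom{k-1}{i} - 1\right) + c_0}
\end{equation*}
for some function $g(t)$ and constant $c_0$ depending only on $k$; here the exponent $n^{t}$ from $\binom{n}{t}$ is being fought against $n^{-(t-k+1)(\sum\alpha_i\binom{k-1}{i}-1)}$ coming from the forced faces, after using $\sum_1^{k-1}\alpha_i\binom{k-1}{i} > 1$. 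Since the coefficient of $t$ in the exponent is $1 - (\sum_1^{k-1}\alpha_i\binom{k-1}{i} - 1)$ — wait, I need to be careful: the relevant comparison is between the polynomial growth $n^t$ and the decay per added vertex, and the threshold $D$ is chosen precisely so that for $t \geq D+k$ the net exponent is negative and bounded away from $0$. Plugging $t = D+k$ into the exponent and using the hypothesis $\frac{k - \sum_1^{k-1}\alpha_i\binom{k}{i+1}}{\sum_1^{k-1}\alpha_i\binom{k-1}{i} - 1} < D$ should give that the exponent of $n$ in $\E[W_t]$ is at most $-\eta$ for some $\eta > 0$, uniformly for all $t \geq D+k$ (the exponent only decreases as $t$ grows). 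Then $\sum_{t \geq D+k} \E[W_t]$ is $O(n^{-\eta})$ — the sum over $t$ up to $n$ is harmless since each term decays and there are only polynomially many — and Markov's inequality finishes the argument: w.h.p.\ no such subcomplex exists.

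The main obstacle I expect is the bookkeeping in the first step: correctly counting the forced lower-dimensional faces of a strongly connected pure $(k-1)$-complex without over- or under-counting shared faces, and verifying that the resulting exponent per top face is exactly $\sum_1^{k-1}\alpha_i\binom{k-1}{i}$ (equivalently, that the "marginal cost" of extending a strongly connected complex by one face and one vertex is $n \cdot n^{-\sum_1^{k-1}\alpha_i\binom{k-1}{i}}$, matching the link-size parameter $\bar p$ from Lemma \ref{linklemma}). This is where I would lean on the analogous computation in Section 5 of \cite{general}; the combinatorial skeleton is the same, and the only new feature is that the probability of a single $(k-1)$-face being present is a product $\prod_1^{k-1} p_i^{\binom{k}{i+1}}$ rather than a single $p$, but this does not change the structure of the estimate. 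Once the per-face exponent is pinned down, the choice of $D$ in the hypothesis is exactly what is needed for the geometric-type sum to converge to $0$, so the remainder of the proof is routine.
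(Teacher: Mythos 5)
There is a genuine gap in the final step of your plan, namely the union bound over all sizes $t \geq D+k$. Your per-vertex accounting is essentially correct (the initial $(k-1)$-face costs $\prod_1^{k-1} p_i^{\binom{k}{i+1}} = n^{-k+\beta}$, and each additional supporting vertex forces $\binom{k-1}{i}$ new $i$-faces, costing $\prod_1^{k-1} p_i^{\binom{k-1}{i}} = n^{-1-\epsilon}$ against the factor $n$ of vertex choices, with $D$ chosen so that $\beta < D\epsilon$). But the claim that ``the sum over $t$ up to $n$ is harmless since each term decays'' does not hold: the factor you call $g(t)$, the number of shapes of strongly connected pure $(k-1)$-complexes (or even of minimal witness structures, where each new face attaches along a $(k-2)$-face and adds one vertex) on $t$ labelled vertices, grows at least like $t^{\Theta(t)}$, and for general subcomplexes like $2^{\binom{t}{k}}$. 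Per added vertex your bound is then of order $t^{k-1} n^{-\epsilon}$, which exceeds $1$ once $t$ is a small power of $n$, so the terms for large $t$ do not decay and the sum over all $t \leq n$ is not $o(1)$ as written. Some additional idea is needed to restrict attention to bounded $t$.

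The paper supplies exactly that idea, and it is the one missing from your proposal: order the faces of a strongly connected complex so that each face meets an earlier one in a $(k-2)$-face; then any strongly connected pure $(k-1)$-complex on more than $D+k$ vertices contains a strongly connected subcomplex on exactly $D+k$ vertices (the one spanned by the first $D+k$ ordered vertices). Hence it suffices to rule out subcomplexes on exactly $D+k$ vertices, of which there are only finitely many isomorphism types (a constant depending on $D$ and $k$, not on $n$), and at that single size the first-moment bound is
\begin{equation*}
(D+k)!\binom{n}{D+k}\prod_1^{k-1} p_i^{\binom{k}{i+1}+D\binom{k-1}{i}} \leq n^{\beta - D\epsilon} = o(1),
\end{equation*}
which is precisely where the hypothesis on $D$ enters. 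With this truncation step added, the rest of your computation goes through; without it, the argument as proposed fails for large $t$.
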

\begin{proof}
Let $K$ be such a subcomplex, since it is strongly connected we may order its faces $f_1, f_2, \ldots f_m$ where each face $f_j$, for $j>1$, has $(k-2)$-dimensional intersection with at least one $f_l$ with $l < j$. This induces an ordering on the supporting vertices $v_1, \ldots , v_s$ by looking at the vertex supports of $f_1, f_1 \cup f_2, f_1 \cup f_2 \cup f_3, \ldots$ Thus each vertex after $v_k$ corresponds to the addition of a $(k-1)$-face $f_j$, along with the $\binom{k-1}{i}$ $i$-dimensional faces of $f_j$ that include this vertex (and hence were not contained in $f_1 \cup \cdots \cup f_{j-1}$
). 

If $K$ has $D+k$ vertices, it follows that there are at least $$\binom{k}{i+1} + D \binom{k-1}{i}$$ $i$-dimesional faces for each $1 \leq i \leq k-1$. Now either $X$ w.h.p.\ contains no $(k-1)$-simplices, in which case the result is trivial, or $$\prod_{i=1}^{k-1} p_i^{\binom{k}{i+1}} = \prod_{i=1}^{k-1} n^{-\alpha_i \binom{k}{i+1}} = n^{-k + \beta}$$ for some $\beta >0$. By hypothesis $$\prod_{i=1}^{k-1} p_i^{\binom{k-1}{i}} = \prod_{i=1}^{k-1} n^{-\alpha_i \binom{k-1}{i}} = n^{-1 - \epsilon}$$ for some $\epsilon >0$. Choosing $D$ such that $\beta < D \epsilon$, we apply a union bound on the probability of $X$ containing a subcomplex isomorphic to $K$:
\begin{align*}
\P(\exists \textrm{ subcomplex}) &\leq (D+k)! \binom{n}{D+k} \prod_1^{k-1} p_i^{\binom{k}{i+1} + D \binom{k-1}{i}} \\
&= (D+k)! \binom{n}{D+k} n^{(-k+\beta) - D (1 + \epsilon)} \\
&\leq n^{D+k} n^{-(D+k)} n^{\beta - D \epsilon} \\
&= n^{\beta - D\epsilon} \\
&= o(1).
\end{align*}
The last line holds by our choice of $D$. 

As there are finitely many isomorphism classes of strongly connected $(k-1)$-complexes on $N+k$ vertices, a union bound shows that w.h.p.\ none of them are subcomplexes of $X$. We complete our proof by observing that any such complex with more vertices must contain a strongly connected subcomplex on $D+k$ vertices, for example the subcomplex induced by the first $D+k$ ordered vertices.
\end{proof}
\end{subsection}

\begin{subsection}{The threshold for a simplex boundary}
Here we prove our lower threshold for vanishing homology, which is sharp when $p_k \neq 1$.
\begin{proof}[Proof of Theorem \ref{lower}]
We consider some non-trivial $(k-1)$-cycle $\gamma$ with strongly connected support and $K$, its induced subcomplex in $X$. By our hypothesis we have that $$k+1 < \sum_1^{k-1} \alpha_i \binom{k+1}{i+1},$$ and either $X$ will w.h.p.\ contain no $(k-1)$-simplices, making the result trivial, or $$\sum_1^{k-1} \alpha_i \binom{k}{i+1} < k.$$ Moreover,
\begin{align*}
\sum_1^{k-1} \alpha_i \binom{k-1}{i} &= \sum_1^{k-1} \alpha_i \frac{i+1}{k} \binom{k}{i+1} \\
&= \sum_1^{k-1} \alpha_i \frac{i+1}{k} \frac{k-i}{k+1} \binom{k+1}{i+1} \\
&\geq \frac{1}{k+1} \sum_1^{k-1} \alpha_i \binom{k+1}{i+1} > 1.
\end{align*} 
Thus we may invoke Lemma \ref{vsupp} to conclude $K$ is w.h.p.\ supported on less than $D+k$ vertices. As in that proof, we may order the vertices $v_1, \ldots, v_{k+m}$ for some $m < D$. We prove our result by removing one vertex at a time from $K$ and counting the faces containing it that must also be removed. 

Since we have a non-trivial cycle every vertex is contained in \textbf{at least $k$} $(k-1)$-simplices. Removing $v_{k+m}$ first, we observe the fewest faces are removed if $v_{k+m}$ is contained in exactly $k$ $(k-1)$-simplices. In this case we then remove $\binom{k}{i}$ $i$-dimensional faces for each $i$. We then remove vertices $v_{k+m-1}, \ldots , v_{k+1}$, and by construction each one was contained in a $(k-1)$-face comprised exclusively of vertices before it, so at each removal step we remove at least that simplex. Thus at each removal we account for at least $\binom{k-1}{i}$ $i$-faces for each $i$. The last $k$ vertices correspond to our initial $(k-1)$-simplex. Putting this together we get a lower bound on the probability of a subcomplex isomorphic to $K$ appearing:

\begin{align*}
\P(\exists \textrm{ a subcomplex}) &\leq (k+m)! \binom{n}{k+m} \left(\prod_1^{k-1} p_i^{\binom{k}{i}} \right) \left( \prod_1^{k-1} p_i^{\binom{k-1}{i}} \right)^{m-1} \left( \prod_1^{k-1} p_i^{\binom{k}{i+1}} \right) \\
&\leq n^{k+m} \left(\prod_1^{k-1} p_i^{\binom{k+1}{i+1}} \right) \left(\prod_1^{k-1} p_i^{\binom{k-1}{i}} \right)^{m-1} \\
&\leq \left( n^{k+1} \prod_1^{k-1} p_i^{\binom{k+1}{i+1}} \right) \left( n^{m-1} \prod_1^{k-1} p_i^{\binom{k-1}{i}} \right)^{m-1} \\
&= o(1).
\end{align*}
The last line holds since $$k+1 < \sum_1^{k-1} \alpha_i \binom{k+1}{i+1} \textnormal{ and } 1 < \sum_1^{k-1} \alpha_i \binom{k-1}{i}.$$ 

As there are finitely many isomorphism types of strongly connected $(k-1)$-complexes on less than $D+k$ vertices, we may apply this argument to each of them and apply a union bound to conclude that w.h.p.\ none of them are subcomplexes of $X$. Thus we w.h.p.\ have no non-trivial $(k-1)$-cycles, and $H_{k-1}(X, \Z) = 0$.
\end{proof}
\end{subsection}
\end{section}

\appendix
\begin{section}{Boundaries of Simplices}
\begin{proof}[Proof of Lemma \ref{boundaries}]
We consider the case $$1 \leq \sum_1^k \binom{k}{l} \alpha_l$$ where (from \eqref{eqn:47} in Section 5) we have that $\E [M_{k-1} ] \rightarrow \infty$. By Chebyshev's inequality, 
\begin{equation*}
\mathbb{P} \Big[ \big\lvert M_{k-1} - \E [M_{k-1}] \big\rvert \geq \E [M_{k-1}] \Big] \leq \frac{\textrm{Var}[M_{k-1}]}{\E [M_{k-1}]^2}.
\end{equation*}
Thus if we can show $\textrm{Var} [M_{k-1}] = o\left( \E [M_{k-1}]^2 \right)$, then we may conclude
\begin{equation*}
\P [ M_{k-1} > 0 ] \rightarrow 1.
\end{equation*}

Considering $M_{k-1}$ as a sum of indicator random variables, 
\begin{align*}
\textrm{Var}[M_{k-1}] &\leq \mathbb{E}[M_{k-1}] + \sum_{i,j \in \binom{[n]}{k}} \textrm{Cov}[1_{A_i}, 1_{A_j}] \\
&=\mathbb{E}[M_{k-1}] + \sum_{i,j \in \binom{[n]}{k}} \left(  \P[A_i \cap A_j] - \P[A_i]\P[A_j]  \right).
\end{align*}

Clearly $\E[M_{k-1}] = o \left( \E[M_{k-1}]^2 \right)$, to handle the sum we consider pairs $i, j \in \binom{[n]}{k+1}$ and break them into 3 cases depending on $I = \abs{i \cap j}$. To make the calculations more readable we introduce some useful notation, defining $\eta_k$ to be
\begin{equation*}
\eta_k = (1-p_k) \prod_1^{k-1} p_l^{\binom{k+1}{l+1}},
\end{equation*}
the probability that our complex contains the unfilled boundary of a specific $k$-simplex. We define $\gamma_k$ as
\begin{equation*}
\gamma_k = \prod_1^k p_l^{\binom{k}{l}},
\end{equation*}
the probability that a fixed $(k-1)$-face and vertex form a $k$-simplex.

\begin{subsection}{$I = 0$} We begin by calculating $\P[A_i \cap A_j]$. The probability that both boundaries are in our complex but unfilled is $\eta_k^2$. By inclusion-exclusion principles the probability that neither $\sigma_i$ nor $\sigma_j$, the associated first $(k-1)$-faces of these subcomplexes, form a $k$-simplex with a vertex outside of $i \cup j$ is $1 - 2\gamma_k + \gamma_k^2$, and there are $n-2k-2$ such vertices. Finally, we must have that no $k$-face is formed between one subcomplex and a single vertex of the other. While this probability can be explicitly calculated, every term that isn't 1 will contain a copy of $\gamma_k$, so this probability is $1 - O(\gamma_k)$. Thus
\begin{equation*}
\P[A_i \cap A_j] = \eta_k^2 \left(1-2\gamma_k +\gamma_k^2 \right)^{n-2k-2} \left(1- O(\gamma_k) \right),
\end{equation*}
and by \eqref{eqn:expy} in Section 5 we know
\begin{align*}
\P[A_i] \P[A_j] &= \left( \eta_k \left(1-\gamma_k\right)^{n-k-1} \right)^2 \\
&= \eta_k^2 \left(1-2\gamma_k +\gamma_k^2\right)^{n-k-1} \\
&= \eta_k^2 \left(1-2\gamma_k +\gamma_k^2 \right)^{n-2k-2} \left(1-2\gamma_k +\gamma_k^2 \right)^{k+1} \\
&= \eta_k^2 \left(1-2\gamma_k +\gamma_k^2 \right)^{n-2k-2} \left(1 - O(\gamma_k) \right).
\end{align*}

Thus $$\P[A_i \cap A_j] - \P[A_i] \P[A_j] = \eta_k^2 \left(1-2\gamma_k +\gamma_k^2 \right)^{n-2k-2} O(\gamma_k)$$ and there are $O\left(n^{2k+2} \right)$ such pairs $i,j$, so the overall contribution of these pairs to our sum is
\begin{align*}
S_0 &= O\left(n^{2k+2} \eta_k^2 \left(1-2\gamma_k +\gamma_k^2 \right)^{n-2k-2} \gamma_k \right) \\
&= O\left(n^{2k+2} \eta_k^2 \left(1-2\gamma_k +\gamma_k^2 \right)^{n-k-1} \gamma_k \right).
\end{align*}
The second equality holds by restricting our consideration to $n > k$, then $\gamma_k \leq n^{-1} < k^{-1}$ and there is some $C > 0$ such that
\begin{equation*} 
(1-2\gamma_k + \gamma_k^2)^{k+1} > (1-2\gamma_k)^{k+1} > (1-2k^{-1})^k > C,
\end{equation*} 
so removing this term does not affect our big-$O$ calculations.

Since
\begin{equation*}
\mathbb{E} [M_{k-1}]^2 = \binom{n}{k+1}^2 \eta_k^2 (1-\gamma_k)^{2(n-k-1)} = O \left( n^{2k+2} \eta_k^2 (1-2\gamma_k + \gamma_k^2)^{n-k-1} \right)
\end{equation*}
and $\gamma_k \rightarrow 0$ we conclude
\begin{equation*}
\frac{S_0}{\mathbb{E}[M_{k-1}]^2} = O(\gamma_k) = o(1).
\end{equation*}
Hence the contribution of these pairs to the variance is seen to be $o\left(\mathbb{E}[M_{k-1}]^2 \right)$.

\end{subsection}

\begin{subsection}{$I = 1$} 
The probability of both subcomplexes being in $X$ is again $\eta_k^2$ since the two don't share a face of dimension greater than 0. We again use inclusion-exclusion to calculate the probability that $\sigma_i$ and $\sigma_j$ don't form $k$-simplices with another vertex. However, these faces may or may not both contain the shared vertex: if they don't then the calculations are identical to above, so we assume the alternative. In this case the two $k$-faces formed with some new vertex would share a common edge. So the probability is $\left(1 - 2\gamma_k + \gamma_k^2 p_1^{-1}\right)$ for each of the $n-2k-1$ remaining vertices. Similarly, the probability we don't have a $k$-face consisting of $\sigma_i$ or $\sigma_j$ and a vertex in $i \bigtriangleup j$ is $1-O\left(\gamma_k p_1^{-1}\right)$. We then calculate $\P [A_i \cap A_j]$ to be
\begin{equation*}
\P [A_i \cap A_j] = \eta_k^2 \left(1- 2\gamma_k + \gamma_k^2 p_1^{-1}\right)^{n-2k-1} \left(1 - O(\gamma_k p_1^{-1})\right).
\end{equation*}

Before calculating $\P[A_i \cap A_j] - \P[A_i] \P[A_j]$, we observe
\begin{align*}
1-2\gamma_k +\gamma_k^2 &= \left(1-2\gamma_k + \gamma_k^2 p_1^{-1}\right) \frac{1-2\gamma_k +\gamma_k^2}{1-2\gamma_k + \gamma_k^2 p_1^{-1}} \\
&= \left(1-2\gamma_k + \gamma_k^2 p_1^{-1}\right) \left(1 - \frac{\gamma_k^2(p_1^{-1} -1)}{1-2\gamma_k + \gamma_k^2 p_1^{-1}} \right) \\
&= \left(1-2\gamma_k + \gamma_k^2 p_1^{-1}\right) \left(1 - O(\gamma_k^2 p_1^{-1})\right).
\end{align*}
The last equality holds by an identical argument to the one in the first case: we can bound $1-2\gamma_k+\gamma_k^2p_1^{-1}$, and consequently its inverse, from above and below by constants. We use this to calculate
\begin{align*}
\P[A_i] \P[A_j] &= \eta_k^2 \left(1-2\gamma_k +\gamma_k^2 \right)^{n-k-1} \\
&= \eta_k^2 \left[ \left(1-2\gamma_k + \gamma_k^2 p_1^{-1}\right) \left( 1 - O \left(\gamma_k^2 p_1^{-1} \right) \right)  \right]^{n-k-1} \\
&= \eta_k^2 \left(1-2\gamma_k + \gamma_k^2 p_1^{-1}\right)^{n-k-1} \left(1 - O\left(\gamma_k^2 p_1^{-1}\right) \right)^{n-k-1}.
\end{align*}
But since $\gamma_k < n^{-1}$ we have
\begin{align*}
\left(1 - O\left(\gamma_k^2 p_1^{-1}\right)\right)^{n-k-1} &= 1 - O\left( n \gamma_k^2 p_1^{-1}\right)  \\
&= 1 - O\left(\gamma_k p_1^{-1}\right).  
\end{align*}
We calculate
\begin{align*}
\P[A_i] \P[A_j] &= \eta_k^2 \left(1-2\gamma_k + \gamma_k^2 p_1^{-1}\right)^{n-k-1} \left(1 - O\left(\gamma_k p_1^{-1}\right) \right) \\
&= \eta_k^2 \left(1-2\gamma_k + \gamma_k^2 p_1^{-1}\right)^{n-2k-1} \left(1-2\gamma_k + \gamma_k^2 p_1^{-1}\right)^k \left(1 - O\left(\gamma_k p_1^{-1}\right) \right) \\
&= \eta_k^2 \left(1-2\gamma_k + \gamma_k^2 p_1^{-1}\right)^{n-2k-1} \left(1-O\left(\gamma_k\right) \right) \left(1 - O\left(\gamma_k p_1^{-1}\right) \right) \\
&= \eta_k^2 \left(1-2\gamma_k + \gamma_k^2 p_1^{-1}\right)^{n-2k-1} \left(1 - O\left(\gamma_k p_1^{-1}\right) \right).
\end{align*}

Therefore $$\P[A_i \cap A_j] - \P[A_i] \P[A_j] = \eta_k^2 \left(1-2\gamma_k + \gamma_k^2 p_1^{-1}\right)^{n-2k-1}O\left(\gamma_k p_1^{-1} \right)$$ with $O\left(n^{2k+1}\right)$ such pairs $i,j$, making the total contribution of these pairs to the variance
\begin{align*}
S_1 &= O \left(  n^{2k-1}  \eta_k^2 (1-2\gamma_k + \gamma_k^2 p_1^{-1})^{n-2k+1}\gamma_k p_1^{-1})      \right) \\
&= O \left(  n^{2k-1}  \eta_k^2 (1-2\gamma_k + \gamma_k^2 p_1^{-1})^{n-k-1}\gamma_k p_1^{-1})      \right).
\end{align*}
As before, the second equality follows from bounding $(1-2\gamma_k + \gamma_k^2 p_1^{-1})^{k-1}$ by constants on either side. 

Since $$\mathbb{E} [M_{k-1}]^2 = O \left( n^{2k+2} \eta_k^2 (1-2\gamma_k + \gamma_k^2)^{n-k-1} \right)$$ it follows that
\begin{align*}
\frac{S_1}{\mathbb{E} [M_{k-1}]^2} &= O \left(   \frac{(1-2\gamma_k + \gamma_k^2 p_1^{-1})^{n-k-1}\gamma_k p_1^{-1}}{n (1-2\gamma_k + \gamma_k^2)^{n-k-1}}    \right) \\
&= O \left( \frac{\gamma_k p_1^{-1}}{n} \left(1 + \frac{\gamma_k^2(p_1^{-1}-1)}{1-2\gamma_k +\gamma_k^2}          \right)^{n-k-1}            \right) \\
&= O \left( \frac{\gamma_k p_1^{-1}}{n} \left(1 + \frac{\gamma_k^2p_1^{-1}}{1-2\gamma_k}          \right)^{n-k-1}            \right).
\end{align*}
We proceed by bounding the right term by a constant.
\begin{align*}
\left(1 + \frac{\gamma_k^2p_1^{-1}}{1-2\gamma_k}          \right)^{n-k-1} &\leq \exp \left( (n-k-1) \frac{\gamma_k^2p_1^{-1}}{1-2\gamma_k} \right) \\
&\leq \exp \left( \frac{ n \gamma_k^2 p_1^{-1}}{1-k} \right) \\
&\leq e^{1/(1-k)}.
\end{align*}
Then
\begin{align*}
\frac{S_1}{\mathbb{E} [ M_{k-1} ] ^2} &= O \left(  \frac{\gamma_k p_1^{-1}}{n}    \right) \\
&= O\left(\frac{1}{n}\right) \\
&= o(1).
\end{align*}
Thus the contribution of these pairs is also $o\left(\mathbb{E} [ N_{k-1} ]^2 \right)$, as desired.
\end{subsection}

\begin{subsection}{$2 \leq I \leq k$}
In this final case the probability of the two subcomplexes being contained is $\eta_k^2 \eta_I^{-1}$ where $\eta_I := \prod_1^{I-1} p_l^{\binom{I}{l+1}}$. The $\eta_I^{-1}$ accounts for all  faces common to $i$ and $j$, which would otherwise be counted twice.  We note $\sigma_i$ and $\sigma_j$ share between $I-2$ and $I$ vertices, and assuming maximal overlap provides an upper bound on $\P[A_i \cap A_j]$. Hence the probability that neither will form a $k$-simplex with some other vertex is at most $\left(1 - 2\gamma_k + \gamma_k^2 \gamma_I^{-1}\right)^{n-2k-2+I}$ with $\gamma_I := \prod_1^{I} p_l^{\binom{I}{l}}$. The probability of one not forming a $k$-simplex with one vertex of the other is $1-O\left(\gamma_k \gamma_I^{-1} \right)$. We see
\begin{equation*}
\P[A_i \cap A_j] = \eta_k^2 \eta_I^{-1} \left(1 - 2\gamma_k + \gamma_k^2 \gamma_I^{-1} \right)^{n-2k-2+I} \left( 1-O\left(\gamma_k \gamma_I^{-1}\right) \right) .
\end{equation*}
Just as in the previous case,
\begin{align*}
1 - 2\gamma_k + \gamma_k^2 &= \left(1 - 2\gamma_k + \gamma_k^2 \gamma_I^{-1}\right) \left( 1 - O\left(\gamma_k^2 \gamma_I^{-1}\right) \right).
\end{align*}
We now calculate
\begin{align*}
\P[A_i] \P[A_j] &= \eta_k^2 \left(1-2\gamma_k + \gamma_k^2 \right)^{n-k-1} \\
&= \eta_k^2 \left(1 - 2\gamma_k + \gamma_k^2 \gamma_I^{-1}\right)^{n-k-1} \left( 1 - O\left( \gamma_k^2 \gamma_I^{-1}\right) \right)^{n-k-1} \\
&=  \eta_k^2 \left(1 - 2\gamma_k + \gamma_k^2 \gamma_I^{-1}\right)^{n-2k-2+I} \left( 1 - O\left(\gamma_k \gamma_I^{-1}\right) \right).
\end{align*}
It then follows that
\begin{align*}
\frac{\P[A_i] \P[A_j]}{\P[A_i \cap A_j]} &= \frac{  \eta_k^2 \left(1 - 2\gamma_k + \gamma_k^2 \gamma_I^{-1}\right)^{n-2k-2+I} \left( 1 - O\left(\gamma_k \gamma_I^{-1}\right)\right)}{ \eta_k^2 \eta_I^{-1} \left(1 - 2\gamma_k + \gamma_k^2 \gamma_I^{-1}\right)^{n-2k-2+I} \left( 1-O\left(\gamma_k \gamma_I^{-1}\right) \right)} \\
&= O\left(\eta_I\right).
\end{align*}
Thus if $\eta_I \neq 1$ then $\P[A_i \cap A_j] - \P[A_i] \P[A_j] = \left(1 - o(1)\right) \P[A_i \cap A_j]$, and otherwise $\P[A_i \cap A_j] - \P[A_i] \P[A_j] = \eta_k^2 \left(1 - 2\gamma_k + \gamma_k^2 \gamma_I^{-1}\right)^{n-2k+I} O\left(\gamma_k \gamma_I^{-1}\right)$. There are $O\left(n^{2k+2-I}\right)$ such pairs, so their total contribution to the variance is either
\begin{align*}
S_I &= O \left( n^{2k+2-I} \eta_k^2 \eta_I^{-1} \left(1 - 2\gamma_k + \gamma_k^2 \gamma_I^{-1}\right)^{n-2k-2+I} \right) \\
&= O \left( n^{2k+2-I} \eta_k^2 \eta_I^{-1} \left(1 - 2\gamma_k + \gamma_k^2 \gamma_I^{-1}\right)^{n-k-1}  \right),
\end{align*}
or
\begin{equation*}
S_I = O \left( n^{2k+2-I} \eta_k^2 \left(1 - 2\gamma_k + \gamma_k^2 \gamma_I^{-1}\right)^{n-k-1} \gamma_k \gamma_I^{-1} \right) .
\end{equation*}

In the first case we have 
\begin{align*}
\frac{S_I}{\mathbb{E} [ M_{k-1}]^2} &= O\left(\frac{n^{2k+2-I} \eta_k^2 \eta_I^{-1} \left(1 - 2\gamma_k + \gamma_k^2 \gamma_I^{-1}\right)^{n-k-1} }{n^{2k+2} \eta_k^2 \left(1-2\gamma_k + \gamma_k^2\right)^{n-k-1}}            \right) \\
&=  O\left(\frac{\eta_I^{-1}}{n^I} \left(\frac{1 - 2\gamma_k + \gamma_k^2 \gamma_I^{-1}}{1 - 2\gamma_k + \gamma_k^2}  \right)^{n-k-1}           \right).
\end{align*}
Just as before, the right-most term can be bounded above by a constant. We note $$\sum_1^{I-1} \alpha_l \binom{I}{l+1} < \frac{I}{k} \sum_1^{k-1} \alpha_l \binom{k}{l+1} < \frac{I}{k} k = I$$ and conclude
\begin{align*}
\frac{S_I}{\mathbb{E} [ M_{k-1}]^2} &= O \left( \frac{\eta_I^{-1}}{n^I} \right) \\
&= O \left( n^{-I + \sum_1^{I-1} \alpha_l \binom{I}{l+1}} \right) \\
&= o(1).
\end{align*}

In the second case we have
\begin{align*}
\frac{S_I}{\mathbb{E} [ M_{k-1}]^2} &= O\left(\frac{\gamma_k \gamma_I^{-1}}{n^I} \left(\frac{1 - 2\gamma_k + \gamma_k^2 \gamma_I^{-1}}{1 - 2\gamma_k + \gamma_k^2}  \right)^{n-k-1}       \right) \\
&= O\left(\frac{\gamma_k \gamma_I^{-1}}{n^I}\right) \\
&= O\left(n^{-I}\right) \\
&= o(1).
\end{align*}
Thus $S_I = o\left(\mathbb{E} [ M_{k-1}]^2\right)$ for $2 \leq I \leq k$. We therefore have that $\E[M_{k-1}^2] = o \left( \E[M_{k-1}]^2\right)$, and our result that $M_{k-1} \sim \E[M_{k-1}]$ follows by Chebyshev's Inequality.

\end{subsection}
\end{proof}
\end{section}

\begin{section}{Factorial Moments of Free Faces}
\begin{proof}[Proof of Lemma \ref{boundary1}]

Similar to previous second moment calculations:
\begin{align*}
\E[N_{k-1}^2] &= \binom{n}{k} \sum_{m=0}^k \binom{k}{m} \binom{n-k}{k-m}  \left( \prod_{i=1}^{k-1} p_i^{2\binom{k}{i+1} - \binom{m}{i+1}}\right) \left(1- 2\prod_1^k p_i^{\binom{k}{i}} + \prod_1^k p_i^{2\binom{k}{i} - \binom{m}{i}} \right)^{n-2k+m} \left(1- o(1)\right) \\
&\approx \binom{n}{k} \left(\prod_{i=1}^{k-1} p_i^{2\binom{k}{i+1}} \right) \sum_{m=0}^k \binom{k}{m} \binom{n-k}{k-m} \left( \prod_{i=1}^{m-1}p_i^{- \binom{m}{i+1}} \right) \left(1- 2\prod_1^k p_i^{\binom{k}{i}} + \prod_1^k p_i^{2\binom{k}{i} - \binom{m}{i}} \right)^{n-2k+m} 
\end{align*}
Pulling out the $m=0$ summand, asymptotically
\begin{align*}
 \binom{n}{k} \binom{n-k}{k} \left(\prod_{i=1}^{k-1} p_i^{2\binom{k}{i+1}} \right) \left(1- \prod_1^k p_i^{\binom{k}{i}}\right)^{2(n-2k)}  &\approx  \left(\binom{n}{k} \left(\prod_{i=1}^{k-1} p_i^{\binom{k}{i+1}} \right)  \left(1- \prod_1^k p_i^{\binom{k}{i}}\right)^{n-k} \right)^2 \\
 &= \E [ N_{k-1}]^2 .
\end{align*}
Meanwhile, the $m= k$ term is seen to be $\E[N_{k-1}]$. We claim the $k-1$ other summands do not contribute in the limit. For a fixed $m = 1, \ldots, k-1$ let $d_m < 1$ be some constant value such that
\begin{equation*}
d_m > \textnormal{max} \left\{ 1 - \frac{m(m-1)}{k(k-1)} , 1 - \frac{m - \sum_1^{m-1} \alpha_i \binom{m}{i+1}}{k - \sum_1^{k-1} \alpha_i \binom{k}{i+1}}        \right\}.
\end{equation*}
Both fraction terms are between $0$ and $1$, so such a $d_m$ exists. For sufficiently large $n$ we have
\begin{align*}
1- 2\prod_1^k p_i^{\binom{k}{i}} + \prod_1^k p_i^{2\binom{k}{i} - \binom{m}{i}} &= 1 - \left( 2 - \prod_1^k p_i^{\binom{k}{i} - \binom{m}{i}} \right)  \prod_1^k p_i^{\binom{k}{i}} \\
&\leq 1 -(1+ d_m) \prod_1^k p_i^{\binom{k}{i}}.
\end{align*}
Thus there exists a constant $D$ such that
\begin{align*}
\left(1- 2\prod_1^k p_i^{\binom{k}{i}} + \prod_1^k p_i^{2\binom{k}{i} - \binom{m}{i}} \right)^{n-2k+m} &\leq \left(1 -(1+ d_m) \prod_1^k p_i^{\binom{k}{i}} \right)^{n-2k+m} \\
&\leq D e^{-n(1+d_m) \left(\prod_1^k  p_i^{\binom{k}{i}}   \right)} \\
&= D e^{-(1+d_m) (\rho_1 \log n +\frac{k-1}{2} \log \log n + c)} \\
&= D n^{-(1+d_m) \rho_1} (\log n)^{-(1+d_m) \frac{k-1}{2}} e^{-(1+d_m)c}.
\end{align*}
Then our construction of $d_m$, $$n^{-(1+d_m) \rho_1} = o \left( n^{-2k + m + \sum \alpha_i \binom{k}{i} - \sum \alpha_i \binom{m}{i}} \right)$$ and $$ \left( \log n \right)^{-(1+d_m) \frac{k-1}{2}} = o \left( (\log n ) ^{ -(k-1) + \frac{m(m-1)}{2k}} \right).$$ It follows that the corresponding summand is bounded by
\begin{align*}
D n^{2k - m} \prod_1^{k-1} p_i^{2 \binom{k}{i+1} - \binom{m}{i+1}} n^{-(1+d_m) \rho_1} (\log n)^{-(1+d_m) \frac{k-1}{2}} = o(1),
\end{align*} 
thereby contributing nothing as $n \rightarrow \infty$. Therefore, 
\begin{equation*}
\E[\left( N_{k-1} \right)_2 ] = \E[ N_{k-1}^2] - \E[N_{k-1}] = \E[N_{k-1}]^2 (1-o(1)) \rightarrow \E[N_{k-1}]^2
\end{equation*}
as $n \rightarrow \infty$. We will now establish a similar result for each factorial moment. \\

We direct our attention to the $l$-th factorial moment of $N_{k-1}$, assuming that $\E[(N_{k-1})_{j}] \rightarrow \E[N_{k-1}]^j$ for all $j<l$. Using the notation a Section 3 we have
\begin{align*}
\E\left[N_{k-1}^l \right] &= \E \left[ \left(\sum_{\sigma \in \binom{[n]}{k}} 1_{C_\sigma} \right)^l \right] \\
&= \sum_{\sigma_1, \ldots , \sigma_l \in \binom{[n]}{k}} \P \left[ C_{\sigma_1} \cap \dots \cap C_{\sigma_l} \right].
\end{align*}

We break up this sum into two parts: where no two $\sigma_i$'s are identical and where such two $\sigma_i$ are the same. Considering the first case, an identical argument for $l=2$ tells us the only summand contributing in the limit corresponds to when no two faces share any vertices, and this term converges to $\E[N_{k-1}]^l$. 

Moving on to the second case, we let $s(l,j)$ and $S(l,j)$ denote Stirling numbers of the first and second kind, respectively. There are $S(l, j)$ ways to break our $\sigma_i$ up into $j$ groups where all faces in a group are the same. Moreover, for any such configuration into $j$ groups, the corresponding contribution to $\E[N_{k-1}^l]$ would be $\E[N_{k-1}^j]$. We begin by pulling out $S(l,l-1)=-s(l,l-1)$ copies of $\E[N_{k-1}^{l-1}]$. However, the number of partitions of $\sigma_i$ into $k-2$ groups has now been overcounted. There should only be $S(l,l-2)$ such configurations, but we have just counted $-s(l,l-1) S(l-1,l-2)$ of them, so we add $S(l,l-2) + s(l,l-1)S(l-1,l-2) = -s(l,l-2)$ copies of $\E[N_{k-1}^{l-2}]$. 

Fixing some $j< l-1$, we now assume attaching a coefficient of $-s(l,m)$ to $\E[N_{k-1}^{m}]$ for all $m > j$ ensures every partition of the $\sigma_i$ into $j+1, \ldots l-1$ sets is properly counted. Then for each $m>j$, the $-s(l,m)$ copies of $ \E[N_{k-1}^{m}]$ count $-s(l,m) S(m,j)$ partitions into just $j$ groups. Meanwhile we know there are actually only $S(l,j)$ distinct partitions, so we must add:
\begin{align*}
S(l,j) + \sum_{m=j+1}^{l-1} s(l,m) S(m,j) &= \sum_{m=j+1}^{l} s(l,m) S(m,j) \\
&=  \sum_{m=j}^{l} s(l,m) S(m,j) - s(l,j)S(j,j) \\
&= \delta_{l,j} -s(l,j) = -s(l,j).
\end{align*}
The last line follows from a well known Stirling number identity. We use induction to conclude $$\E[N_{k-1}^l] \rightarrow \E[N_{k-1}]^l - \sum_{j=1}^{l-1} s(l,j) \E[N_{k-1}^j],$$ thus $$\E[(N_{k-1})_j] = \sum_{j=1}^l s(l,j) \E \left[ N_{k-1}^j \right] \rightarrow \E[N_{k-1}]^l$$ for any fixed $l$. It follows that $N_{k-1}$ converges in distribution to $\Poi ( \mu)$ with $\mu = \E[N_{k-1}]$, completing our proof.

\end{proof}
\end{section}

\section*{Acknowledgements}
We would like to thank Christopher Hoffman, Matthew Junge, and Matthew Kahle for helpful conversations on this subject and readings of early versions of this manuscript.

\bibliographystyle{amsalpha}
\bibliography{references}

\newcommand{\etalchar}[1]{$^{#1}$}
\providecommand{\bysame}{\leavevmode\hbox to3em{\hrulefill}\thinspace}
\providecommand{\MR}{\relax\ifhmode\unskip\space\fi MR }
\providecommand{\MRhref}[2]{%
  \href{http://www.ams.org/mathscinet-getitem?mr=#1}{#2}
}
\providecommand{\href}[2]{#2}
\begin{thebibliography}{AL{\L}M13}

\bibitem[AL15]{tophomvanish}
Lior Aronshtam and Nathan Linial, \emph{When does the top homology of a random
  simplicial complex vanish?}, Random Structures \& Algorithms \textbf{46}
  (2015), no.~1, 26--35.

\bibitem[AL{\L}M13]{tophomcollapse}
Lior Aronshtam, Nathan Linial, Tomasz {\L}uczak, and Roy Meshulam,
  \emph{Collapsibility and vanishing of top homology in random simplicial
  complexes}, Discrete \& Computational Geometry \textbf{49} (2013), no.~2,
  317--334.

\bibitem[BHK11]{fundamental2}
Eric Babson, Christopher Hoffman, and Matthew Kahle, \emph{The fundamental
  group of random 2-complexes}, Journal of the American Mathematical Society
  \textbf{24} (2011), no.~1, 1--28.

\bibitem[B{\'S}97]{garland}
Werner Ballmann and J~{\'S}wi{\c a}tkowski, \emph{On l2-cohomology and property
  (t) for automorphism groups of polyhedral cell complexes}, Geometric \&
  Functional Analysis GAFA \textbf{7} (1997), no.~4, 615--645.

\bibitem[CF14]{douchers}
A~Costa and M~Farber, \emph{Random simplicial complexes}, arXiv preprint
  arXiv:1412.5805 (2014).

\bibitem[ER59]{erdos}
P~Erd\H{o}s and A~R\'enyi, \emph{On random graphs i.}, Publ. Math. Debrecen
  \textbf{6} (1959), 290--297.

\bibitem[Hat02]{hatcher2002algebraic}
Allen Hatcher, \emph{Algebraic topology}, Cambridge University Press, 2002.

\bibitem[HKP12]{spec}
Christopher Hoffman, Matthew Kahle, and Elliot Paquette, \emph{Spectral gaps of
  random graphs and applications to random topology}, arXiv preprint
  arXiv:1201.0425 (2012).

\bibitem[HKP13]{randk2}
\bysame, \emph{The threshold for integer homology in random d-complexes}, arXiv
  preprint arXiv:1308.6232 (2013).

\bibitem[Kah09]{general}
Matthew Kahle, \emph{Topology of random clique complexes}, Discrete Mathematics
  \textbf{309} (2009), no.~6, 1658--1671.

\bibitem[Kah14a]{sharp}
\bysame, \emph{Sharp vanishing thresholds for cohomology of random flag
  complexes}, Ann. of Math. \textbf{179} (2014), 1085--1107.

\bibitem[Kah14b]{survey}
\bysame, \emph{Topology of random simplicial complexes: a survey}, AMS Contemp.
  Math \textbf{620} (2014), 201--222.

\bibitem[KM{\etalchar{+}}13]{clt}
Matthew Kahle, Elizabeth Meckes, et~al., \emph{Limit the theorems for betti
  numbers of random simplicial complexes}, Homology, Homotopy and Applications
  \textbf{15} (2013), no.~1, 343--374.

\bibitem[Koz10]{kozlovthreshold}
Dmitry Kozlov, \emph{The threshold function for vanishing of the top homology
  group of random ??-complexes}, Proceedings of the American Mathematical
  Society \textbf{138} (2010), no.~12, 4517--4527.

\bibitem[LM06]{rand2}
Nathan Linial and Roy Meshulam, \emph{Homological connectivity of random
  2-complexes}, Combinatorica \textbf{26} (2006), no.~4, 475--487.

\bibitem[MW09]{randk}
Roy Meshulam and Nathan Wallach, \emph{Homological connectivity of random
  k-dimensional complexes}, Random Structures \& Algorithms \textbf{34} (2009),
  no.~3, 408--417.

\end{thebibliography}

\end{document}